\documentclass[10pt]{article}

\usepackage{vmargin}

\setpapersize{A4}
\setmargins{3.8cm}       
{1.5cm}                        
{13.5cm}                      
{23.42cm}                    
{10pt}                           
{1cm}                           
{0pt}                             
{2cm}

\usepackage{hyperref}
\usepackage{amsfonts}
\usepackage{amsmath}
\usepackage{amsthm}
\usepackage{amssymb}
\usepackage[all]{xy}
\usepackage[latin1]{inputenc}
\usepackage{graphicx}
\usepackage{color}
\usepackage{yfonts}
\input xy
\xyoption{all}
\usepackage{graphicx}
\usepackage{mathrsfs}
\usepackage{float}
\usepackage{framed}
\usepackage{titlesec}

\newtheorem{theorem}{Theorem}[section]
\newtheorem*{theo}{Theorem}
\newtheorem{corollary}[theorem]{Corollary}
\newtheorem{lemma}[theorem]{Lemma}
\newtheorem{proposition}[theorem]{Proposition}
\theoremstyle{definition}
\newtheorem{definition}[theorem]{Definition}
\newtheorem{remark}[theorem]{Remark}

\newcommand*{\longhookrightarrow}{\ensuremath{\lhook\joinrel\relbar\joinrel\rightarrow}}

\newcommand{\slas}{/\!\!/ }

\newcommand{\Ox}{\mathscr{Oo}_{X}}
\newcommand{\N}{\mathscr{N}}

\newcommand{\K}{\mathscr{K}}

\newcommand{\Oo}{\mathscr{O}}


\titleformat{\section}[display]{\scshape \bfseries}{\S \thesection\filcenter}{1ex}{\fillast}

\titleformat{\subsection}[hang]{\itshape\bfseries}{\thesubsection. --- }{0pt}{\upshape}

\titleformat{\subsubsection}[runin]{\itshape\filleft}{\thesubsubsection.---}{0pt}{}




\newcommand{\eq}[1][r]
   {\ar@<-3pt>@{-}[#1]
    \ar@<-1pt>@{}[#1]|<{}="gauche"
    \ar@<+0pt>@{}[#1]|-{}="milieu"
    \ar@<+1pt>@{}[#1]|>{}="droite"
    \ar@/^2pt/@{-}"gauche";"milieu"
    \ar@/_2pt/@{-}"milieu";"droite"}


\date{}
\title{
\textbf{A compactification of the universal moduli space of principal G-bundles}
\author{
\textsc{\'Angel Luis Mu\~noz Casta\~neda}
\footnote{Departamento of Matem\'aticas, Universidad de Le\'on, Spain, email: {\texttt amunc@unileon.es}}
}
}

\begin{document}

\maketitle

\begin{abstract}
Given a semisimple linear algebraic group $G$ and a natural number $g\geq 2$, we prove the existence of a compactification of the universal moduli space of semistable principal $G$-bundles over $\overline{\textrm{M}}_{g}$, the fibers of which over singular curves are the moduli spaces of $\delta$-semistable singular principal $G$-bundles for $\delta$ a natural number large enough.
\end{abstract}

\vspace{1cm}

{\textsl {Keywords:}} {\small principal bundles, swamps, universal moduli, stable curves}

{\textsl {2010 MSC: 14H60; 14D20.} }

\tableofcontents

\section{Introduction}

Let $X$ be a smooth projective curve of genus $g\geq 2$ over an algebraically closed field, $\mathbb{C}$, of characteristic $0$ and $G$ a connected reductive algebraic group. 
In \cite{Rama,Rama2}, A. Ramanathan proved the existence of a projective moduli space, $\textrm{M}^{(s)s}_{X}(G)$, of (semi)stable principal $G$-bundles on $X$ with fixed topological type. When $G=\textrm{GL}_{n}$, this moduli space is isomorphic to the classical moduli space of (semi)stable vector bundles constructed by D. Mumford and C. S. Seshadri  \cite{mumford0,Seshadri2}. 

A natural problem is to consider a degeneration of $X$ along with a discrete valuation ring $A$, whose special fiber is a stable curve of genus $g$, and to describe what the limit of a semistable principal $G$-bundle on $X$ looks like when we approach the special fiber. When $G=\textrm{GL}_{n}$ (resp. $\textrm{Sp}_{n}$, $\textrm{O}_{n}$), this problem is equivalent to the problem of finding the limit of a locally free sheaf of rank $n$ (resp. together with a bilinear form satisfying certain conditions), and is well-known that the solution is a torsion-free sheaf (resp. together with a bilinear form satisfying certain conditions) \cite{Fal}. The case $G=\textrm{SL}_{n}$ has been studied in \cite{sun1,sun2} by X. Sun. For a general group $G$, V. Balaji studies flat degenerations of the moduli space when the special fiber of the relative curve is irreducible \cite{Balaji}.

One can go further, and study how the moduli space of semistable principal $G$-bundle varies when the base curve moves along with the moduli of space of stable curves, $\overline{\textrm{M}}_{g}$. This leads to the problem of compactifying the universal moduli space of principal $G$-bundles over $\overline{\textrm{M}}_{g}$. Regarding this problem, not so much is known except for the case $G=\textrm{GL}_{n}$ \cite{Pando}.

An alternative construction of the moduli space of principal $G$-bundles was given by A. Schmitt for $G$ a semisimple linear algebraic group. Given a faithful representation $\rho:G\hookrightarrow \textrm{SL}(V)$ of dimension $r$, a singular principal $G$-bundle is a pair $( \mathscr{E},\tau)$ consisting on a vector bundle $ \mathscr{E}$ of rank $r$ and degree $0$ together with a non-trivial morphism of algebras $\tau:S^{\bullet}(V\otimes \mathscr{E})^{G}\rightarrow \mathscr{O}_{X}$. Giving $\tau$ is the same as giving a morphism $X\rightarrow\underline{\textrm{Hom}}_{\mathscr{O}_{X}}(V\otimes\mathscr{O}_{X}, \mathscr{E}^{\vee})\slas G$ and the singular principal $G$-bundle $( \mathscr{E},\tau)$ is said to be honest if $\tau$ takes values in the subscheme of local isomorphisms $\underline{\textrm{Isom}}_{\mathscr{O}_{X}}(V\otimes\mathscr{O}_{X}, \mathscr{E}^{\vee})\slash G$. Schmitt's work is based on the following result (a particular case of \cite[Proposition 9]{serre}),
\begin{equation}\label{equiv}
\left\{
\begin{array}{l}  
\textrm{isomorphism classes of} \\ 
\textrm{principal }G \textrm{-bundles on X}
\end{array}  
\right\} \simeq 
\left\{  \begin{array}{l} 
\textrm{isomorphism classes of pairs }(\mathscr{E},\tau) \\ 
\textrm{where } \mathscr{E}  \textrm{ is a locally free sheaf of}\\ 
\textrm{rank }r\textrm{ with trivial determinant} \\ 
\textrm{and }\tau\colon X\rightarrow \textrm{\textrm{\underline{Isom}}}_{\Ox}(V\otimes \Ox,  \mathscr{E})/G  
\end{array} 
\right\}.
\end{equation}
In \cite{Alexander4}, it is proved the existence of a projective moduli space, $\textrm{SPB}(\rho)^{\delta\textrm{-}(s)s}_{X,P}$, of $\delta$-semistable singular principal $G$-bundles with Hilbert polynomial $P$ on a smooth projective curve, for a given $\delta\in\mathbb{Q}_{>0}$ and $\rho:G\hookrightarrow \textrm{SL}(V)$. Furthermore, it is proved that for $\delta\gg0$, every $\delta$-semistable singular principal $G$-bundle is honest and that $\textrm{SPB}(\rho)^{\delta\textrm{-}(s)s}_{X,P}$ is isomorphic to $\textrm{M}_{X}^{(s)s}(G)$. Taking the above into account, A. Schmitt's approach seems to be suitable to handle the problems mentioned above for an arbitrary reductive group G.

The goal of this work is to prove the existence of a compactification of the moduli problem defined by pairs $(X,P)$, where $X$ is a smooth projective curve of genus $g$ and $P$ is a semistable principal $G$-bundle. The approach we will follow is A. Schmitt's approach so, thanks to Equation (\ref{equiv}), we will be able to substitute $P$ by its corresponding singular principal $G$-bundle. A key ingredient in the theory of singular principal bundles is the notion of swamp, that is, a pair $(\mathscr{F},\phi)$ where $\mathscr{F}$ is a coherent sheaf of pure dimension one and $\phi:(\mathscr{F}^{\otimes a})^{\oplus b}\rightarrow \mathscr{O}_{X}$ is a morphism of $\mathscr{O}_{X}$-modules \cite{L}. From \cite{AMC}, it follows that the construction of the moduli space for singular principal bundles is essentially reduced to the construction of the moduli space of swamps, so a great part of this work is focused on the universal moduli space of these objects.

The precise statement of the main result is the following.
\begin{theo}[Theorem \ref{Mainteo2}, Section \ref{fibers}]
Let $G$ be a semisimple linear algebraic group, $\rho:G\hookrightarrow \emph{SL}(V)$ a faithful representation, $V$ being a vector space of dimension $r$, $g\geq 2$ a natural number, $\delta\in\mathbb{Q}_{>0}$ a rational number, and $P(n)=(2g-2)rn+r(1-g)\in\mathbb{Z}[n]$ a polynomial of degree one.
There exists a projective scheme $\emph{SPB}(\rho)^{\delta\emph{-(s)s}}_{g,P}$ together with a map 
$$\Theta_{sb}:\emph{SPB}(\rho)^{\delta\emph{-(s)s}}_{g,P}\rightarrow\overline{\emph{M}}_{g}$$
satisfying that for any stable curve $[X]\in\overline{\emph{M}}_{g}$, 
$\Theta_{sb}^{-1}([X])=\emph{SPB}(\rho)^{\delta\emph{-(s)s}}_{X,P}/\emph{Aut}(X).$
Making $\delta$ large enough, we have
$\Theta_{sb}^{-1}([X])=\textrm{M}^{(s)s}_{X}(G)/\emph{Aut}(X)$
for every smooth curve, $[X]\in\overline{\emph{M}}_{g}$. Thus, the universal moduli space of semistable principal $G$-bundles $M_{g}^{(s)s}(G)$ is an open subscheme of the projective scheme $\emph{SPB}(\rho)^{\delta\emph{-(s)s}}_{g,P}$.
\end{theo}

The main technical result for proving the above theorem is the following.
 \begin{theo}[Theorem \ref{bound3}]
Let $g,h,C\in\mathbb{N}$ with $g\geq 2$ and $\underline{P}$ a finite set of polynomials of degree one with integral coefficients. There is a natural number $N_{0}$ depending only on $\underline{P}, C,g,h$ such that for every Cohen-Macaulay projective and connected curve of genus $g$ with a very ample line bundle $\mathscr{O}_{X}(1)$ of degree $h$ and every coherent sheave of pure dimension one over $X$, $\mathscr{F}$, with Hilbert polynomial in $\underline{P}$ that satisfies $\mu_{\textrm{max}}(\mathscr{F})\leq C$, we have $h^{1}(X,\mathscr{F}(k))=0$ for all $k\geq N_{0}$ and $\mathscr{F}(k)$ is generated by its global sections.
 \end{theo}

This result is necessary to prove that there exists a relative Quot scheme over the parameter space of the universal curve of genus $g$ containing every coherent sheaf of pure dimension one with fixed rank and degree appearing in a $\delta$-semistable singular principal bundle. 
Note that, in case $\underline{P}$ consists of one polynomial $P(n)=\alpha n+ \beta$ and $C=\beta/\alpha$, this shows that there is a Quot scheme relative to the parameter space for stable curves of genus $g$ in which every semistable sheaf with Hilbert polynomial $P$ appears (see \cite[Section 5, Section 6]{Pando}).

Following the proof of \cite[Theorem 3.6]{sols} and applying the corollaries of Theorem \ref{bound3} appropriately we can prove the following result. 

 \begin{theo}[Theorem \ref{teoremon}]
Let $g,h,D,a,b\in\mathbb{N}$ with $g\geq 2$ and $P(n)\in\mathbb{Z}[n]$ a polynomial of degree one. There are natural numbers $N,L\in\mathbb{N}$ depending only on the numerical input data such that for every $k\geq N$ and every $l\geq L$ the following holds: for every Cohen-Macaulay projective and connected curve of genus $g$ with a very ample line bundle $\mathscr{O}_{X}(1)$ of degree $h$, a point $(q,\Phi)\in Z_{P,\mathscr{D},a,b}^{k,l}(X)$ is $\emph{GIT}$-(semi)stable with respect to $\mathscr{O}_{Z_{P,\mathscr{D},a,b}^{k,l}(X)}(n_{1},n_{2})$ if and only if the corresponding swamp $( \mathscr{F},\phi)$ is $\delta$-(semi)stable and the linear map $f_{q}\colon F\rightarrow H^{0}(X, \mathscr{F}(k))$ is an isomorphism, where $F:=\mathbb{C}^{P(k)}$.
 \end{theo}
Here $Z_{P,\mathscr{D},a,b}^{k,l}(X)$ is the parameter space for swamps, and $\mathscr{O}_{Z_{P,\mathscr{D},a,b}^{k,l}(X)}(n_{1},n_{2})$ is a certain very ample line bundle on it. From \cite{AMC}, it follows the corresponding result for singular principal bundles which, in turn, implies that the projective scheme we construct is a coarse moduli space for pairs $(X,(\mathscr{F},\tau))$ given by a  stable curve and a $\delta$-semistable singular principal bundle.

\subsection{The strategy and known results} 

The necessary steps for the construction of a compactification of the universal moduli space of principal $G$-bundles by adding singular principal bundles on stable curves are summarized below. Although the construction of the moduli space for a single curve is known, we include them in the summary for the sake of clarity of the exposition.

\subsubsection{The fiber-wise problem} Let $X$ be a stable curve of genus $g$. The moduli space $\textrm{SPB}(\rho)^{\delta\textrm{-}(s)s}_{X,P}$ of $\delta$-semistable singular principal $G$-bundles with Hilbert polynomial $P$ is constructed in several steps. 
\begin{enumerate}
\item It is shown that there are natural numbers, $a$ and $b$, large enough such that to every singular principal $G$-bundle $\tau:S^{\bullet}(V\otimes \mathscr{F})^{G}\rightarrow \mathscr{O}_{X}$ with Hilbert polynomial $P$ we can assign a swamp $\phi_{\tau}:((V\otimes \mathscr{F})^{\otimes a})^{\oplus b}\rightarrow\mathscr{O}_{X}$ with Hilbert polynomial $P$, and this mapping, $\tau\mapsto\phi_{\tau}$, is injective. Then, a singular principal $G$-bundle $( \mathscr{F},\tau)$ is said to be $\delta$-semistable if the corresponding swamp $( \mathscr{F},\phi_{\tau})$ is $\delta$-semistable. Thus, the construction of the moduli space of singular principal $G$-bundles is essentially reduced to the construction of the moduli space of swamps. 
\item It is shown that there is a natural number $N$ large enough such that for every $\delta$-semistable swamp, $( \mathscr{F},\phi)$, and for every $k\geq N$, $ \mathscr{F}(k)$ is generated by global sections and $h^{1}(X, \mathscr{F}(k))=0$. This implies that every $\delta$-semistable swamp $( \mathscr{F},\phi)$ defines a point, $[(q,\Phi)]$, in the projective scheme 
\begin{equation*}
\textrm{Quot}^{P}_{\mathbb{C}^{P(k)}\otimes\mathscr{O}_{X}(-k)/X/\mathbb{C}}\times \mathbf{P}((((\mathbb{C}^{P(k)})^{\otimes a})^{\oplus b})^{\vee}\otimes H^{0}(X,\mathscr{O}_{X}(ak)))
\end{equation*}
where $q:\mathbb{C}^{P(k)}\otimes\mathscr{O}_{X}(-k)\twoheadrightarrow  \mathscr{F}$ and $\mathbb{C}^{P(k)}\simeq H^{0}(X, \mathscr{F}(k))$ is a fixed isomorphism.
\item It is shown that, given $k\geq N$, there is a natural number $L$ large enough such that for every $l\geq L$, the closed immersion 
$$i_{k,l}:\textrm{Quot}^{P}_{\mathbb{C}^{P(k)}\mathscr{O}_{X}(-k)/X/\mathbb{C}}\hookrightarrow \mathbf{P}(\bigwedge^{P(l)}(\mathbb{C}^{P(k)}\otimes H^{0}(X,\mathscr{O}_{X}(l-k))))$$ 
defined by the Grothendieck embedding of the Quot scheme composed with Pl\"ucker embedding satisfies the following property:  a point $(i_{k,l}\times id)([( \mathscr{F},\phi)])$ in the projective scheme
\begin{equation}\label{space2}
\mathbf{P}(\bigwedge^{P(l)}(\mathbb{C}^{P(k)}\otimes H^{0}(X,\mathscr{O}_{X}(l-k))))  \times \mathbf{P}(((V^{\otimes a})^{\oplus b})^{\vee}\otimes H^{0}(X,\mathscr{O}_{X}(ak)))
\end{equation}
is GIT semistable for the action of $\textrm{SL}_{P(k)}$ and respect to certain polarization $\mathscr{L}$ 
if and only if $( \mathscr{F},\phi)$ is $\delta$-semistable and $\mathbb{C}^{P(k)}\simeq H^{0}(X, \mathscr{F}(k))$. 
\item The construction of both moduli spaces, $\textrm{SPB}(\rho)^{\delta\textrm{-}(s)s}_{P,X}$ and $\mathcal{T}_{P,X,a,b}^{\delta\textrm{-}(s)s}$, is as follows. It is shown that there exist closed subschemes $\textrm{Z}_{bund}(X)\subset \textrm{Z}_{sw}(X)$ of the projective scheme given in Equation (\ref{space2}) parametrizing (rigidified) singular principal $G$-bundles and swamps respectively. The action of $\textrm{SL}_{P(k)}$ on this scheme induces an action of both schemes, $\textrm{Z}_{bund}(X)$ and $\textrm{Z}_{sw}(X)$. From Step 3, we conclude that $\textrm{Z}^{ss}_{sw}(X)\slas\textrm{SL}_{P(k)}$ exists and is projective, which in turn implies that $\textrm{Z}_{bund}^{ss}(X)\slas\textrm{SL}_{P(k)}$ exists and is projective as well. Again, from Step 3, we conclude that $\textrm{Z}^{ss}_{sw}(X)\slas\textrm{SL}_{P(k)}$ and $\textrm{Z}_{bund}^{ss}(X)\slas\textrm{SL}_{P(k)}$ are coarse projective moduli spaces for $\delta$-semistable swamps and $\delta$-semistable singular principal $G$-bundles. 
\end{enumerate}

\subsubsection{The relative problem}\label{relative}
Fix integers $g\geq 2$, $d=10(2g-2)$ and $M=d-g$. Consider the Hilbert functor of projective curves in $\mathbf{P}^{M}$ of genus $g$ and degree $d$.
Let us denote by $\textrm{H}_{g,d,M}$ the representative of $\textbf{Hilb}_{d,g}$ and $\textrm{H}_{g}\subset \textrm{H}_{g,d,M}$ the locus of non-degenerate, 10-canonical stable curves of genus $g$. The scheme $\textrm{H}_{g}$ is a locally closed subscheme and it is a nonsingular, irreducible quasi-projective variety.

Fix an isomorphism for each stable curve, $X$, of genus $g$, $\mathbb{C}^{M+1}\simeq H^{0}(X,\mathscr{\omega}_{X}^{10})$, and let $h(s)=ds-g+1$ be a polynomial in $s$ of degree $1$. 
Recall that there exists an integer $s_{0}$ such that $ \forall s\geq s_{0}$,
$i'_{s}:\textrm{H}_{d,g,N}\hookrightarrow \textbf{Grass}(h(s),H^{0}(\mathbf{P}^{M},\mathscr{O}_{\mathbf{P}^{M}}(s))^{\vee})$
is a closed immersion. Moreover, there exists $s_1$ such that the GIT linearized problem $i'_{s_1}$ satisfies:
(1) $\textrm{H}_{g}$ belongs to the semistable locus,
(2) $\textrm{H}_{g}$ is closed in the semistable locus.
Finally, the action of $\textrm{SL}_{M+1}$ on $\mathbf{P}^{M}$ induces an action on $\textrm{H}_{g}$, and Gieseker shows that
$
\overline{\textrm{M}}_{g}=\textrm{H}_{g}/\textrm{SL}_{M+1}.
$
The scheme $\textrm{H}_{g}$ is endowed with a universal family $U_{g}\rightarrow \textrm{H}_{g}$ called the universal curve.

\begin{enumerate}
\item It is shown that all the numbers appearing in the fiber-wise problem (that need to be large enough) do not depend on the base curve. That is, there are $a,b,N,L$ that works for every stable curve of genus $g$.
\item Relative parameter spaces  $\textrm{Z}_{bund,g}\rightarrow H_{g}$ and $\textrm{Z}_{sw,g}\rightarrow H_g$ for families of $\delta$-semistable singular principal $G$-bundles and families of $\delta$-semistable swamps on the fibers of $U_{g}\rightarrow H_{g}$ are constructed. Again $\textrm{Z}_{bund,g}\subset \textrm{Z}_{sw,g}$.
\item $\textrm{Z}_{bund,g}$ and $\textrm{Z}_{sw,g}$ are embedded in the projective scheme $\mathbf{P}(H_1)\times\mathbf{P}(H_2)\times\mathbf{P}(H_3)$, where
\begin{equation}
\begin{split}
H_{1}&=(((V)^{\otimes a})^{\oplus b})^{\vee}\otimes H^{0}(\mathbf{P}^{M},\mathscr{O}_{\mathbf{P}^{M}}(ak))\\
H_{2}&=\bigwedge^{h(s)}(H^{0}(\mathbf{P}^{M},\mathscr{O}_{\mathbf{P}^{M}}(s)))\\
H_{3}&=\bigwedge^{P(l)}(\mathbb{C}^{P(k)}\otimes H^{0}(\mathbf{P}^{M},\mathscr{O}_{\mathbf{P}^{M}}(l-k)))\\
\end{split}
\end{equation}
\item The natural action of $\textrm{SL}_{P(k)}\times\textrm{SL}_{M+1}$ on $\mathbf{P}(H_1)\times\mathbf{P}(H_2)\times\mathbf{P}(H_3)$ induces an action on $\textrm{Z}_{bund,g}$ and $\textrm{Z}_{sw,g}$. Then, it is shown that $\textrm{Z}^{ss}_{sw,g}\slas (\textrm{SL}_{P(k)}\times\textrm{SL}_{M+1})$ exists and is projective, which in turn implies that $\textrm{Z}_{bund,g}^{ss}\slas (\textrm{SL}_{P(k)}\times\textrm{SL}_{M+1})$ exists and is projective as well. Finally, Step 1 implies that $\textrm{Z}_{bund,g}^{ss}\slas (\textrm{SL}_{P(k)}\times\textrm{SL}_{M+1})$ coarsely represents the moduli functor for pairs, and that there is $\delta\in\mathbb{Q}_{>0}$ large enough such that the fibers over nonsingular curves are precisely the moduli spaces of semistable principal $G$-bundles over them (quoted out by the group of automorphisms of the base curve).
\end{enumerate}
\subsubsection{Known results}
Regarding the fiber-wise problem, it is worth noting that the moduli space of $\delta$-semistable swamps has been constructed in \cite{L} over any projective scheme of pure dimension one, even in the relative case, while the existence of a projective moduli space of $\delta$-semistable singular principal $G$-bundles over any stable curve (and also in the relative case) has been proved in \cite{AMC}. 
The problem regarding the uniform behavior along with $\overline{\textrm{M}}_{g}$ of the natural numbers $a,b$ of Paragraph 1.1.1. 1) has been already solved in \cite{AMC} as well.
However, the problem of the uniform behavior of the numerical parameters $N,L$ of Paragraphs 1.1.1. 2) and 3) remains unsolved, and it is essential for the construction of the universal compactification. Regarding the relative problem, an important fact is that the actions of the groups $\textrm{SL}_{n}$ and $\textrm{SL}_{M+1}$ commute with each other, which makes the relative GIT problem easier to handle (see \cite{Pando} for the case of vector bundles). This fact allows us to make use of some technical results proved in \cite{Pando} to show the existence of a projective moduli space for $\delta$-semistable swamps over $\overline{\textrm{M}}_{g}$.

\subsection{Outline of the paper}

This paper is organized as follows. In Section \ref{sectionuniform} we deal we Step $1$ of Paragraph \ref{relative}, and the main result is Theorem \ref{bound3}. In Section \ref{comparision}, we apply it to the key steps of the construction of the moduli space of $\delta$-semistable swamps, and we show the uniform behavior along with $\overline{\textrm{M}}_{g}$ of the numerical parameters involved in such construction. We end up with the existence of a projective moduli space of $\delta$-semistable swamps of given Hilbert polynomial over a stable curve of genus $g$. This result has been proved in \cite{L}, but it has been included for the sake of clarity of the exposition.

In Section \ref{sectionuniversal}, we prove the existence of a coarse projective moduli space for the moduli functor defined by pairs $(X,( \mathscr{F},\phi))$, $X$ being a stable curve of genus $g$ and $( \mathscr{F},\phi)$ a $\delta$-semistable swamp of given Hilbert polynomial. The forgetful map defines a morphism between this moduli space and $\overline{\textrm{M}}_{g}$.

Finally, in Section \ref{sectionmain} we show the existence of a coarse projective moduli space for the moduli functor defined by pairs $(X,( \mathscr{F},\tau))$, $X$ being a stable curve of genus $g$ and $( \mathscr{F},\tau)$ a $\delta$-semistable singular principal $G$-bundle of given rank and degree $0$. As in the case of swamps, there is a morphism to $\overline{\textrm{M}}_{g}$. This, together with the results given in  \cite{AMC, Alexander2,Alexander1, Asch} and Section \ref{sectionuniform}, implies that the fibers of the above morphism over nonsingular curves are precisely the classical moduli spaces of principal $G$-bundles constructed by A. Ramanathan, and they form an open subset of the constructed moduli space.

The base field will be taken to be an algebraically closed field of characteristic $0$. The word curve will mean a $\mathbb{C}$-scheme of finite type and pure dimension one.


\section{A uniform boundedness result on Cohen-Macaulay curves}
\label{sectionuniform}

Although the aim is to construct moduli spaces over stable curves, the results of this section hold for Cohen-Macaulay curves, so the results are stated for this more general case.

\subsection{Preliminaries}

We will state some notation and recall some basic facts. We expose them specialized to the case of curves. 

Let $X$ be a Cohen-Macaulay projective and connected curve of genus $g$ together with a very ample invertible sheaf, $\mathscr{O}_{X}(1)$.

\subsubsection{Regularity}
Let $k\in\mathbb{Z}$ be an integer. A coherent sheaf, $\mathscr{F}$, on $X$, is $k$-regular if 
$H^{1}(X,\mathscr{F}(k-1))=0$.
If $\mathscr{F}$ is $k$-regular and $k'>k$, then $\mathscr{F}$ is also $k'$-regular. From Serre's Vanishing theorem, it follows that there is always an integer $k$ such that $\mathscr{F}$ is $k$-regular. The regularity of $\mathscr{F}$ is defined by
$$\textrm{reg}(\mathscr{F})=\textrm{inf}\{k\in\mathbb{Z}:\mathscr{F} \textrm{ is }k\textrm{-regular}\}.$$

Given a family of equivalence classes of coherent sheaves whose Hilbert polynomials belong to a finite set in $\mathbb{Z}[n]$, we can decide whether the family is bounded or not by looking at the regularity of the members of the family (see (\cite[Lemma 1.7.6]{Huyb}).

\subsubsection{Polarized slope}
Given a coherent sheaf, $\mathscr{F}$, on $X$ its (polarized) degree and its (polarized) slope are defined as 
\begin{equation}
\begin{split}
\textrm{deg}(\mathscr{F}):=\chi(\mathscr{F})-r\chi(\mathscr{O}_{X}), \
\mu(\mathscr{F}):=\dfrac{\chi(\mathscr{F})}{\alpha},
\end{split}
\end{equation}
$\alpha$ being the multiplicity of $\mathscr{F}$ (the degree one coefficient of its Hilbert polynomial) and $r=\alpha/h$ its rank. 
Observe that the degree of a coherent sheaf $\mathscr{F}$ is determined by its Hilbert polynomial, the degree of the very ample line bundle we have fixed, $\mathscr{O}_{X}(1)$, and the genus $g$ of the curve.

\subsubsection{Pure sheaves}
A coherent sheaf on $X$ is of pure dimension one if $\textrm{dim}(\textrm{Supp}(\mathscr{G}))=1$ for every $\mathscr{G}\subseteq\mathscr{F}$.  Recall that a coherent sheaf of pure dimension one, $\mathscr{F}$, is semistable if for any subsheaf $\mathscr{F}'\subset \mathscr{F}$,
$\mu(\mathscr{F}')\leq\mu(\mathscr{F}).$
Recall also that for any coherent sheaf of pure dimension one, $\mathscr{F}$, there is a unique filtration (Harder-Narasimhan filtration)
\begin{equation*}
0=\mathscr{F}_{0}\subset\mathscr{F}_{1}\subset\hdots\subset\mathscr{F}_{k}=\mathscr{F}
\end{equation*}
such that the quotients $\mathscr{F}_{i}/\mathscr{F}_{i-1}$ are semistable sheaves with decreasing slopes.  As usual, we will use the following notation,
\begin{align*}
\mu_{max}(\mathscr{F})&=\textrm{max}\{\mu(\mathscr{F}_{i}/\mathscr{F}_{i-1})|i=1,\hdots,k\}=\mu(\mathscr{F}_{1}), \\ 
\mu_{min}(\mathscr{F})&=\textrm{min}\{\mu(\mathscr{F}_{i}/\mathscr{F}_{i-1})|i=1,\hdots,k\}=\mu(\mathscr{F}/\mathscr{F}_{k-1}).
\end{align*}
For any subsheaf $\mathscr{G}\subset\mathscr{F}$, we have $\mu(\mathscr{G})\leq\mu_{\textrm{max}}(\mathscr{F})$.


\subsection{Vector bundles on the projective line}

Let $\mathscr{E}$ be a vector bundle of rank $r$ on the projective line $\mathbf{P}^{1}_{k}$. By \cite{groth0}, we know that there are integers $n_{1}\geq\hdots\geq n_{r}$ such that 
$$\mathscr{E}\simeq\bigoplus_{i=1}^{r}\mathscr{O}(n_{i}).$$
The tuple $(n_{1},\hdots,n_{r})$ is defined as the type of $\mathscr{E}$ and it is denoted by $\tau(\mathscr{E})$. We will denote by $\tau_{min}(\mathscr{E})$ (respectively, $\tau_{max}(\mathscr{E})$) the minimum (respectively, maximum) integer of the type $\tau(\mathscr{E})$ of $\mathscr{E}$, that is, $\tau_{\textrm{min}}(\mathscr{E})=n_{r}$ (respectively $\tau_{\textrm{max}}(\mathscr{E})=n_{1}$).

Let $r,m\in\mathbb{N}$, and $d\in\mathbb{Z}$ be integers. There are finitely many isomorphism classes of locally free sheaves on $\mathbf{P}^{1}$ of rank $r$, degree $d$, and such that $h^{0}(\mathbf{P}^{1},\mathscr{E})=m$. Such isomorphism classes are determined by the tuples of integers  $n_{1}\geq\hdots\geq n_{r}$ verifying the equations
\begin{equation}\label{tuple}
\begin{split}
\delta(n_{1})+\hdots+\delta(n_{r})&=m,  \textrm{ where }\delta(n_{i})=\left\{ \begin{array}{cc} n_{i}+1 & \textrm{ if }n_{i}\geq 0\\ 0 & \textrm{ otherwise}  \end{array} 
\right.\\
n_{1}+\hdots+n_{r}&=d
\end{split}
\end{equation}
Let us denote by $N(r,d,m)$ the set of tuples of integers $n_{1}\geq\hdots\geq n_{r}$ satisfying Equation (\ref{tuple}) and by $S_{-}(r,d,m)$ (resp. $S_{+}(r,d,m)$) the minimum (resp. maximum) among the integers $i\in\mathbb{Z}$ that appear as the smaller (resp. largest) integer in a tuple of $N(r,d,m)$. Therefore, any integer $n\in \mathbb{Z}$ that appears in a tuple $(n_{1},\hdots,n_{r})\in N(r,d,m)$ satisfies that $S_{+}(r,d,m)\geq n\geq S_{-}(r,d,m)$.

  \begin{lemma}\label{bound00}
Let $\mathscr{E}$ be a locally free sheaf of rank $r$, degree $d$ and $h^{0}(\mathbf{P}^{1},\mathscr{E})=m$. Then, for every $n\geq -S_{-}(r,d,m)$, $\mathscr{E}(n)$ is generated by its global sections and $h^{1}(\mathbf{P}^{1}, \mathscr{E}(n))=0$.
 \end{lemma}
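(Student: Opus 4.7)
The plan is to apply Grothendieck's splitting theorem together with the explicit cohomology of line bundles on $\mathbf{P}^1$. By \cite{groth0}, write $\mathscr{E}\simeq\bigoplus_{i=1}^{r}\mathscr{O}(n_i)$ with $n_1\geq\cdots\geq n_r$. The first task is to verify that the tuple $(n_1,\dots,n_r)$ lies in $N(r,d,m)$: the degree condition $\sum n_i=d$ is immediate, while the cohomology condition $\sum \delta(n_i)=m$ follows from the standard computation $h^0(\mathbf{P}^1,\mathscr{O}(k))=k+1$ for $k\geq 0$ and $0$ otherwise, which is exactly the definition of $\delta$ given in Equation (\ref{tuple}).

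Next I would translate the twist. We have $\mathscr{E}(n)\simeq\bigoplus_{i=1}^{r}\mathscr{O}(n_i+n)$, so global generation and $H^1$-vanishing for $\mathscr{E}(n)$ reduce to the corresponding statements for each summand $\mathscr{O}(n_i+n)$. The elementary facts are that $\mathscr{O}(m)$ on $\mathbf{P}^1$ is generated by its global sections exactly when $m\geq 0$, and that $h^1(\mathbf{P}^1,\mathscr{O}(m))=0$ exactly when $m\geq -1$. Thus it is enough to show $n_i+n\geq 0$ for every $i$, i.e. $n\geq -n_r=-\tau_{\min}(\mathscr{E})$.

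Finally I would invoke the definition of $S_{-}(r,d,m)$: since $(n_1,\dots,n_r)\in N(r,d,m)$ and $S_{-}$ is the minimum of the smallest entries across all such tuples, one has $n_r\geq S_{-}(r,d,m)$, equivalently $-n_r\leq -S_{-}(r,d,m)$. Therefore any $n\geq -S_{-}(r,d,m)$ automatically satisfies $n\geq -\tau_{\min}(\mathscr{E})$, and the claim follows.

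There is essentially no hard step here; the only thing requiring care is to make sure one is quantifying correctly over $N(r,d,m)$, since $S_{-}$ is a universal bound (depending only on $r,d,m$, not on the individual $\mathscr{E}$), which is precisely what makes the statement useful in the subsequent uniformity arguments of Section \ref{sectionuniform}.
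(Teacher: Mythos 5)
Your argument is correct and follows essentially the same route as the paper's own proof: Grothendieck splitting, reduction to the line bundle summands $\mathscr{O}(n_i+n)$, and the observation that every entry of the type is bounded below by $S_{-}(r,d,m)$ by definition. The only cosmetic difference is that the paper computes the $h^1$ vanishing via Serre duality on $\mathbf{P}^1$ rather than quoting the vanishing range for $h^1(\mathbf{P}^1,\mathscr{O}(m))$ directly; this changes nothing of substance.
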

\begin{proof}
Let $\tau( \mathscr{E})=(n_{1},\hdots,n_{r})$ be the type of $\mathscr{E}$, that is, $\mathscr{E}=\bigoplus_{i=1}^{r}\mathscr{O}_{X}(n_{i})$. Then, $\mathscr{E}(n)$ is generated by global sections if and only if $n+n_{i}\geq 0$ for all $i$, that is, if and only if $n\geq -n_{i}$ for all $i$. However, $n_{i}\geq S_{-}(r,d,m)$ for every $i$ by definition, so taking $n\geq -S_{-}(r,d,m)$ we get the desired result. On the other hand $h^{1}(\mathbf{P}^{1}, \mathscr{E}(n))=\sum_{i=1}^{r}h^{0}(\mathbf{P}^{1},\mathscr{O}_{\mathbf{P}^{1}}(-2-n_{i}-n))$, and $h^{0}(\mathbf{P}^{1},\mathscr{O}_{\mathbf{P}^{1}}(-2-n_{i}-n))=0$ for every $n\geq-S_{-}(r,d,m)$, so we are done.
\end{proof}

\subsection{Uniform boundedness}

Let $X$ be a Cohen-Macauly projective and connected curve together with a very ample line bundle $\mathscr{O}_{X}(1)$ of degree $h$. 
By \cite[Proposition 6]{mum-red} there exists a finite surjective morphism $f:X\rightarrow \mathbf{P}^{1}$ such that $f^{*}\mathscr{O}_{\mathbf{P}^{1}}(1)\simeq \mathscr{O}_{X}(1)$ and \cite[Theorem 23.1]{matsumura} implies that $f$ is a flat. 
Let  $\mathscr{L}$ be a coherent sheaf over $X$ of degree $d$ and multiplicity $\alpha$, and consider $\mathscr{E}:=f_{*}\mathscr{L}$.
Since $f$ is finite and $f^{*}\mathscr{O}_{\mathbf{P}^{1}}(1)\simeq \mathscr{O}_{X}(1)$, we know that $P_{\mathscr{E}}(n)=P_{\mathscr{L}}(n)$. Assuming $n\gg0$, we get
\begin{equation}\label{num}
\begin{split}
P_{\mathscr{E}}(n)&=\textrm{rk}(\mathscr{E})\cdot n+\textrm{rk}(\mathscr{E})(1-g)+\textrm{deg}(\mathscr{E}),\\
P_{\mathscr{L}}(n)&=\alpha\cdot n+\dfrac{\alpha}{h}(1-g)+d.
\end{split}
\end{equation}
hence, $\textrm{rk}(\mathscr{E})=\alpha$ and $\textrm{deg}(\mathscr{E})=(1-g)(\dfrac{\alpha}{h}-\alpha)+d$. 

Assume now that $\mathscr{L}$ is an invertible sheaf. Then, $\mathscr{E}$ is locally free and there are integers $a_{1}(f)\geq\hdots \geq a_{h}(f)$ such that
\begin{align*}
\mathscr{E}:=f_{*}\mathscr{L}=\bigoplus_{i=1}^{h}\mathscr{O}_{\mathbf{P}^{1}_{k}}(a_{i}(f)), \ (1-g)(1-h)+d=\sum_{i=1}^{h}a_{i}(f).
\end{align*}

 \begin{definition}
Let $X,\mathscr{O}_{X}(1), f,\mathscr{L}$ be as above. We define the $f$-type of $\mathscr{L}$ as the tuple $(a_{1}(f),\cdots, a_{h}(f))$, and it is denoted by $\tau_{f}(\mathscr{L})$.
 \end{definition}

  \begin{lemma}\label{uniform1}
Let $g,h,m\in\mathbb{N}$ and $d\in\mathbb{Z}$. There are integers $S_{-}, S_{+}$ depending only on $g,h,m,d$ such that for any Cohen-Macaulay projective and connected curve of genus $g$ with a very ample line bundle $\mathscr{O}_{X}(1)$ of degree $h$, any line bundle $\mathscr{L}$ on $X$ of degree $d$ and $h^{0}(X,\mathscr{L})=m$, and any finite morphism $f:X\rightarrow\mathbf{P}^{1}$ such that $f^{*}\mathscr{O}(1)\simeq\mathscr{O}_{X}(1)$, the following holds:
\begin{equation}\label{ineq0}
S_{+}\geq \tau_{f,max}( \mathscr{E}), \ \tau_{f,min}(\mathscr{E})\geq S_{-},  \textrm{ where }\mathscr{E}:=f_{*}(\mathscr{L}).
\end{equation}
Furthermore, for every $n\geq -S_{-}$, $\mathscr{L}(n)$  is generated by its global sections and $h^{1}(X,\mathscr{L}(n))=0$.
 \end{lemma}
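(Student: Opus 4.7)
The plan is to reduce the statement to Lemma \ref{bound00} applied to the pushforward $\E := f_{*}\Lcc$ on $\Pp^{1}$. First I would record the following consequences of finiteness: since $f$ is finite with $f^{*}\Oo_{\Pp^{1}}(1)\simeq\Oo_{X}(1)$, the projection formula yields $f_{*}(\Lcc(n)) = \E(n)$ for every $n\in\mathbb{Z}$, and the vanishing $R^{i}f_{*}=0$ for $i>0$ gives $H^{i}(X,\Lcc(n)) = H^{i}(\Pp^{1},\E(n))$ for all $i,n$. In particular $h^{0}(\Pp^{1},\E)=h^{0}(X,\Lcc)=m$, while the numerical computation carried out just before the definition of $\tau_{f}(\Lcc)$ fixes $\rk(\E)=h$ and $\deg(\E)=1-g-h+d$. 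Hence $\tau_{f}(\Lcc)$ lies in the finite set $N(h,1-g-h+d,m)$ introduced around equation (\ref{tuple}), and the first assertion follows by taking
$$
S_{\pm}:=S_{\pm}(h,1-g-h+d,m),
$$
which by construction depend only on $g,h,m,d$.

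For the second statement I would fix $n\geq -S_{-}$ and apply Lemma \ref{bound00} to $\E$, obtaining that $\E(n)$ is generated by its global sections on $\Pp^{1}$ and that $h^{1}(\Pp^{1},\E(n))=0$. The cohomological identification above then yields $h^{1}(X,\Lcc(n))=0$ immediately. To transfer global generation from $\E(n)$ to $\Lcc(n)$, I would pull the surjection $H^{0}(\Pp^{1},\E(n))\otimes\Oo_{\Pp^{1}}\twoheadrightarrow\E(n)$ back via $f$ and compose it with the adjunction counit $f^{*}f_{*}\Lcc(n)\rightarrow\Lcc(n)$. For a finite morphism this counit is surjective: locally it is the map $M\otimes_{A}B\rightarrow M$, $m\otimes b\mapsto bm$, which hits $m$ via $m\otimes 1$. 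Combined with the identification $H^{0}(X,\Lcc(n))=H^{0}(\Pp^{1},\E(n))$, this exhibits $\Lcc(n)$ as a quotient of $H^{0}(X,\Lcc(n))\otimes\Oo_{X}$, as required.

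The only subtle point is ensuring that the projection formula and the counit argument apply without modification, which reduces to the flatness of $f$; this is already supplied in the paragraph preceding the lemma via \cite[Theorem 23.1]{matsumura}. Beyond that I do not foresee any genuine obstacle: once the finiteness of the set $N(h,1-g-h+d,m)$ is in hand, the lemma is essentially a bookkeeping reduction to the known case of $\Pp^{1}$.
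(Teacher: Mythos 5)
Your proposal is correct and follows essentially the same route as the paper: both reduce to $\mathbf{P}^{1}$ via $\mathscr{E}=f_{*}\mathscr{L}$, take $S_{\pm}=S_{\pm}(h,1-g-h+d,m)$ from the finite set $N(h,1-g-h+d,m)$, invoke Lemma \ref{bound00}, and descend global generation through the surjective adjunction counit $f^{*}f_{*}\rightarrow\mathrm{id}$. The extra details you supply (projection formula, the local form of the counit) merely make explicit what the paper leaves implicit.
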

\begin{proof}
By Equation (\ref{num}), $\textrm{rk}(\mathscr{E})=h$ and $\textrm{deg}(\mathscr{E})=(1-g)(1-h)+d$. On the other hand $h^{0}(\mathbf{P}^{1},\mathscr{E})=h^{0}(X,\mathscr{L})=m$. Then, $S_{-}:=S_{-}(h,(1-g)(1-h)+d,m)$ and $S_{+}:=S_{+}(h,(1-g)(1-h)+d,m)$ satisfy the inequality given in Equation (\ref{ineq0}). Besides, if $n\geq -S_{-}$, $ \mathscr{E}(n)$ is generated by global sections and $h^{1}(\mathbf{P}^{1}, \mathscr{E}(n))=0$ by Lemma \ref{bound00}. This implies that $h^{1}(X,\mathscr{L}(n))=0$, since $h^{1}(X,\mathscr{L}(n))=h^{1}(\mathbf{P}^{1}, \mathscr{E}(n))$, and that $\mathscr{L}(n)$ is generated by its global sections, since the adjunction map, $f^{*}f_{*}\rightarrow \textrm{id}$, is surjective for finite morphisms.
\end{proof}


 \begin{lemma}\label{bound2-3}
Let $g,h\in\mathbb{N}$ and  $n\in\mathbb{Z}$. There exists a constant $C\in\mathbb{Z}$, depending only on $g,h,n$, such that for every Cohen-Macaulay projective and connected curve $X$ of genus $g$ with a very ample line bundle $\mathscr{O}_{X}(1)$ of degree $h$, we have $\mu_{\textrm{max}}(\mathscr{H})\leq C$, where $\mathscr{H}=\oplus_{i=1}^{r}\mathscr{O}_{X}(n_i)$ with $r$ any natural number and $n_i\leq n$.
 \end{lemma}
\begin{proof}
Let $X$ and $\mathscr{H}$ be as in the statement.
Let $f:X\rightarrow\mathbf{P}^{1}$ be a finite morphism such that $f^{*}\mathscr{O}_{\mathbf{P}^{1}}(1)\simeq\mathscr{O}_{X}(1)$. Then, $f_{*}\mathscr{H}=\oplus_{i=1}^{r} \mathscr{E}(n_i)$, where $ \mathscr{E}=f_{*}\mathscr{O}_{X}=\bigoplus_{i=1}^{h}\mathscr{O}_{\mathbf{P}^{1}_{k}}(a_{i}(f))$. By Equation (\ref{num}), we know that $\textrm{rk}(\mathscr{E})=h$ and $\textrm{deg}(\mathscr{E})=(1-g)(1-h)$. Let $ \mathscr{F}\subset\mathscr{H}$ be a  subsheaf of multiplicity $\alpha$. Since $f_{*} \mathscr{F}$ is locally free, there are integers $t_1,\hdots,t_\alpha$ such that $f_{*} \mathscr{F}=\oplus_{i=1}^{\alpha}\mathscr{O}_{X}(t_i)$. Let $n'$ be the maximum among  $n_1,\hdots,n_r$. Since $f_{*} \mathscr{F}\subset f_{*}\mathscr{H}$, we deduce that $t_{i} \leq \tau_{\textrm{max}}( \mathscr{E})+n'\leq \tau_{\textrm{max}}( \mathscr{E})+n$, which implies that $\textrm{deg}(f_{*} \mathscr{F})\leq \alpha(\tau_{\textrm{max}}( \mathscr{E})+n)$. On the other hand, $\textrm{deg}( \mathscr{F})=\alpha(1-g)+\textrm{deg}(f_{*} \mathscr{F})-\dfrac{\alpha}{h}(1-g)$, so $\textrm{deg}( \mathscr{F})\leq \alpha(1-g+\tau_{\textrm{max}}( \mathscr{E})+n)-\dfrac{\alpha}{h}(1-g)$ and, therefore, $\mu( \mathscr{F})\leq 1-g+\tau_{\textrm{max}}( \mathscr{E})+n-\dfrac{1}{h}(1-g)$. Finally, from Lemma \ref{uniform1}, there is a constant $S_{+}$, depending only on $g$ and $h$, such that $\tau_{\textrm{max}}( \mathscr{E})\leq S_{+}$. Therefore, $\mu( \mathscr{F})\leq 1-g+S_{+}+n-\dfrac{1}{h}(1-g)=:C$.
\end{proof}


\begin{lemma}\label{bound-tor}
Let $\underline{P}$ be a finite set of polynomials of degree one with integral coefficients. There is a natural number $B_{\tau}$ depending only on $\underline{P}$ and $c$ such that for every Cohen-Macaulay projective and connected curve with a very ample line bundle $\mathscr{O}_{X}(1)$ of degree $h$ and every coherent sheaf, $\mathscr{F}$, of pure dimension one  with Hilbert polynomial in $\underline{P}$ such that $\chi(T(\mathscr{F}))\leq c$ we have $\textrm{dim}(\mathscr{F}_{x}/\mathfrak{m}_{x}\mathscr{F}_{x})\leq B_{\tau}$.
\end{lemma}
\begin{proof}
Define $B_{\tau}$ as the maximum of the leading coefficients of the polynomials in $\underline{P}$.
Consider a finite surjective morphism $f:X\rightarrow \mathbf{P}^{1}$ such that $f^{*}\mathscr{O}_{\mathbf{P}^{1}}(1)\simeq \mathscr{O}_{X}(1)$. Since $f$ must be flat, $f_{*}\mathscr{F}$ is locally free of rank bounded from above by $B_{\tau}$. Since $\textrm{dim}(\mathscr{F}_{x}/\mathfrak{m}_{x}\mathscr{F}_{x})\leq \textrm{rk}(f_{*}\mathscr{F})$, we are done.
\end{proof}


 \begin{theorem}\label{bound3}
Let $g,h,C\in\mathbb{N}$ with $g\geq 2$, and $\underline{P}$ a finite set of polynomials of degree one with integral coefficients. There is a natural number $N_{0}$ depending only on $\underline{P}, C,g,h$ such that for every Cohen-Macaulay projective and connected curve of genus $g$ with a very ample line bundle $\mathscr{O}_{X}(1)$ of degree $h$ and every coherent sheave of pure dimension one over $X$, $\mathscr{F}$, with Hilbert polynomial in $\underline{P}$ that satisfy $\mu_{\textrm{max}}(\mathscr{F})\leq C$, we have $h^{1}(X,\mathscr{F}(k))=0$ for all $k\geq N_{0}$ and $\mathscr{F}(k)$ is generated by its global sections. 
 \end{theorem}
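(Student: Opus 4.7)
My plan is to reduce the problem to controlling the splitting type of $f_{*}\mathscr{F}$ on $\mathbf{P}^{1}$ for a suitable finite flat morphism $f$. Pick $f\colon X\to\mathbf{P}^{1}$ with $f^{*}\mathscr{O}_{\mathbf{P}^{1}}(1)\simeq\mathscr{O}_{X}(1)$ (which exists by the discussion preceding Lemma \ref{uniform1}). Since $\mathscr{F}$ is pure of dimension one on the Cohen-Macaulay curve $X$, the pushforward $f_{*}\mathscr{F}$ is torsion-free on $\mathbf{P}^{1}$, and Grothendieck's theorem gives a splitting $f_{*}\mathscr{F}\simeq\bigoplus_{i=1}^{\alpha}\mathscr{O}_{\mathbf{P}^{1}}(b_{i})$ with $b_{1}\geq\cdots\geq b_{\alpha}$ and $\alpha$ equal to the multiplicity of $\mathscr{F}$. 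The projection formula yields $H^{1}(X,\mathscr{F}(k))=H^{1}(\mathbf{P}^{1},f_{*}\mathscr{F}(k))$, while the surjective counit $f^{*}f_{*}\to\mathrm{id}$ converts global generation of $f_{*}\mathscr{F}(k)$ into global generation of $\mathscr{F}(k)$. By Lemma \ref{bound00} both properties hold as soon as $k\geq -b_{\alpha}$, so it is enough to produce a uniform lower bound on $b_{\alpha}$ depending only on $\underline{P},C,g,h$. Because $\sum_{i}b_{i}=\mathrm{deg}(f_{*}\mathscr{F})$ is determined by the Hilbert polynomial of $\mathscr{F}$ (via Equation (\ref{num})) and $\underline{P}$ is finite, this reduces to producing a uniform upper bound on $b_{1}=\tau_{\textrm{max}}(f_{*}\mathscr{F})$.

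The upper bound on $b_{1}$ comes from an adjunction/slope argument. The inclusion of the top summand $\mathscr{O}_{\mathbf{P}^{1}}(b_{1})\hookrightarrow f_{*}\mathscr{F}$ transposes under adjunction to a nonzero morphism $\phi\colon\mathscr{O}_{X}(b_{1})\to\mathscr{F}$, whose image $\mathscr{F}'\subseteq\mathscr{F}$ is pure of dimension one and therefore satisfies $\mu(\mathscr{F}')\leq\mu_{\textrm{max}}(\mathscr{F})\leq C$. Writing $\mathscr{F}'=\mathscr{O}_{X}(b_{1})/\mathscr{J}(b_{1})$ for an appropriate subsheaf $\mathscr{J}\subseteq\mathscr{O}_{X}$, a direct Hilbert-polynomial calculation produces the key identity $\mu(\mathscr{F}')=b_{1}+\mu(\mathscr{F}'(-b_{1}))$, where $\mathscr{F}'(-b_{1})=\mathscr{O}_{X}/\mathscr{J}$ is itself a pure dimension-one quotient of $\mathscr{O}_{X}$. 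Its slope is bounded below by $\mu_{\textrm{min}}(\mathscr{O}_{X})$, so that $b_{1}\leq C-\mu_{\textrm{min}}(\mathscr{O}_{X})$.

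It therefore remains to bound $\mu_{\textrm{min}}(\mathscr{O}_{X})$ uniformly from below. Applying Lemma \ref{uniform1} to $\mathscr{L}=\mathscr{O}_{X}$ (so $d=0$, $m=1$) yields constants $S_{\pm}$ depending only on $g,h$ such that the splitting type of $f_{*}\mathscr{O}_{X}=\bigoplus_{i=1}^{h}\mathscr{O}_{\mathbf{P}^{1}}(a_{i}(f))$ lies in $[S_{-},S_{+}]$. For any pure dimension-one quotient $\mathscr{Q}$ of $\mathscr{O}_{X}$, exactness of $f_{*}$ (which is affine) produces a surjection $\bigoplus_{i}\mathscr{O}_{\mathbf{P}^{1}}(a_{i}(f))\twoheadrightarrow f_{*}\mathscr{Q}$; each line-bundle summand of the locally free sheaf $f_{*}\mathscr{Q}$ must receive a nonzero map from some $\mathscr{O}_{\mathbf{P}^{1}}(a_{i}(f))$, so its degree is at least $S_{-}$. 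The same Hilbert-polynomial bookkeeping used in the proof of Lemma \ref{uniform1} yields $\mu_{\mathbf{P}^{1}}(f_{*}\mathscr{Q})=\mu(\mathscr{Q})-1$, hence $\mu(\mathscr{Q})\geq S_{-}+1$; specialising to the last Harder-Narasimhan quotient of $\mathscr{O}_{X}$ gives $\mu_{\textrm{min}}(\mathscr{O}_{X})\geq S_{-}+1$.

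Putting the bounds together, $b_{1}\leq C-S_{-}-1$, so $b_{\alpha}$ is bounded below uniformly, and an $N_{0}$ with the required properties can be read off directly; the regularity estimate $\mathrm{reg}(\mathscr{F})\leq N_{0}$ and boundedness of the family follow immediately from the Kleiman-type criterion recalled just before the statement. The main obstacle I anticipate is the slope identity of the second paragraph together with the uniform lower bound on $\mu_{\textrm{min}}(\mathscr{O}_{X})$: one must handle the possibility that $\phi$ fails to be injective (which can occur on reducible or non-reduced curves) and verify that $\mathscr{F}'(-b_{1})$ remains pure of dimension one, so that the Harder-Narasimhan bound on $\mathscr{O}_{X}$ is genuinely available.
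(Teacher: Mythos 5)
Your proof is correct, but it takes a genuinely different route from the paper. The paper argues by Serre duality: if $h^{1}(X,\mathscr{F}(k))\neq 0$ there is a nonzero map $\mathscr{F}\to\mathscr{\omega}_{X}(-k)$, and bounding the degree of its image from below (via $\mu_{\max}(\mathscr{F})\leq C$) and the degree of the cokernel inside $\mathscr{\omega}_{X}$ from below (via the splitting type of $f_{*}\mathscr{\omega}_{X}$ and Lemma \ref{uniform1}) forces $2g-2\geq k+\mathrm{const}$, hence an upper bound on $k$; global generation is then obtained by a second pass after twisting by a closed point $x$. You instead control the object $f_{*}\mathscr{F}$ itself: purity of $\mathscr{F}$ makes $f_{*}\mathscr{F}$ torsion-free, hence split, the Hilbert polynomial pins down $\sum b_{i}$, and the hypothesis $\mu_{\max}(\mathscr{F})\leq C$ enters through the adjoint map $\mathscr{O}_{X}(b_{1})\to\mathscr{F}$ to cap $b_{1}$ from above, so both $h^{1}$-vanishing and global generation drop out simultaneously from the splitting type. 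Your route avoids Serre duality and the dualizing sheaf entirely, upgrades Lemma \ref{bound00}/Lemma \ref{uniform1} from line bundles to arbitrary pure sheaves in one stroke, and yields a visibly effective $N_{0}$; the paper's route is shorter per step but needs the separate twist-by-a-point argument for global generation. All the steps you flag as delicate do go through: the image $\mathscr{F}'$ is pure because it is a nonzero subsheaf of the pure sheaf $\mathscr{F}$, twisting preserves purity, and the lower bound $\mu(\mathscr{Q})\geq S_{-}+1$ for pure quotients $\mathscr{Q}$ of $\mathscr{O}_{X}$ is exactly the surjection-onto-summands argument the paper itself uses for $f_{*}\mathscr{J}$. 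The only caveat is your normalization $m=h^{0}(X,\mathscr{O}_{X})=1$ when invoking Lemma \ref{uniform1}, which requires $X$ reduced (or at least $H^{0}(X,\mathscr{O}_{X})=\mathbb{C}$); but the paper's own proof makes the equivalent assumption when it sets $m:=h^{0}(X,\mathscr{\omega}_{X})=g$, and it holds for the stable curves that matter here.
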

\begin{proof}

Let $X$, $\mathscr{O}_{X}(1)$ and $\mathscr{F}$ be as in the statement. By Serre duality theorem, we have that
$
h^{1}(X,\mathscr{F}(k))=\textrm{dim}(\textrm{Hom}_{\mathscr{O}_{X}}(\mathscr{F}(k),\mathscr{\omega}_{X})), \  \forall k\in\mathbb{Z}.
$
Suppose that $h^{1}(X,\mathscr{F}(k))\neq0$ and $k\geq 0$. Then, there is a nonzero morphism 
$f':\mathscr{F}\longrightarrow \mathscr{\omega}_{X}(-k).$
Define $\mathscr{K}:=\textrm{Ker}(f')$ and $\mathscr{N}:=\textrm{Im}(f')$ and consider the exact sequence
$$
\xymatrix{
0\ar[r] & \mathscr{K}\ar[r] & \mathscr{F}\ar[r] & \mathscr{N} \ar[r] & 0.
}
$$
Then, we have
\begin{align*}
\textrm{deg}(\mathscr{N})&=\textrm{deg}(\mathscr{F})-\textrm{deg}(\mathscr{K})=\textrm{deg}(\mathscr{F})-\alpha_{\mathscr{K}}\mu(\mathscr{K})\geq  \textrm{deg}(\mathscr{F})-\alpha_{\mathscr{K}}C= &\\
&= P_{ \mathscr{F}}(0)-\dfrac{P_{ \mathscr{F}}'(n)}{h}(1-g)-\alpha_{\mathscr{K}}C\geq\\
&\geq B_{0}:=\textrm{min}\{P_{ \mathscr{F}}(0)-\dfrac{P_{ \mathscr{F}}'(n)}{h}(1-g)-i \cdot C|  i\in[1,\alpha_{\textrm{max}}]\textrm{, }P_{ \mathscr{F}}(n) \in \underline{P}\}
\end{align*}
$\alpha_{\textrm{max}}$ being the maximum among the degree one coefficients of the polynomials in $\underline{P}$. Note that  $B_{0}$ is a constant which depends only on $\underline{P}$, $h$ and the genus $g$.
The injective morphism $\mathscr{N}\hookrightarrow \mathscr{\omega}_{X}(-k)$ induces an injective morphism $\mathscr{N}(k)\hookrightarrow \mathscr{\omega}_{X}$. Then, we have
$\textrm{deg}(\mathscr{N}(k))=\alpha_{\mathscr{N}}\cdot k+\textrm{deg}(\mathscr{N})\geq k+B_{0}$, 
and therefore,
\begin{equation}\label{ineq}
\begin{split}
2g-2&=\textrm{deg}(\mathscr{\omega}_{X})=\textrm{deg}(\mathscr{N}(k))+\textrm{deg}(\mathscr{\omega}_{X}/\mathscr{N}(k))\geq\\
&\geq k+B_{0}+\textrm{deg}(\mathscr{\omega}_{X}/\mathscr{N}(k)).
\end{split}
\end{equation}
Let us find a bound of $\textrm{deg}(\mathscr{\omega}_{X}/\mathscr{N}(k))$. Denote $\mathscr{J}:=\mathscr{\omega}_{X}/\mathscr{N}(k)$. Fix a finite surjective morphism $f:X\rightarrow \mathbf{P}^{1}$ such that $f^{*}\mathscr{O}_{\mathbf{P}^{1}}(1)\simeq \mathscr{O}_{X}(1)$.
Since $f$ is finite, we have a surjection
$
f_{*}(\mathscr{\omega}_{X})\twoheadrightarrow f_{*}\mathscr{J}. 
$
Denote by $\mathscr{T}$ the torsion subsheaf of $f_{*}\mathscr{J}$ and by $\mathscr{U}=f_{*}\mathscr{J}/\mathscr{T}$ the locally free subsheaf. If $\mathscr{U}=0$, $\textrm{deg}(f_{*}\mathscr{I})\geq 0$. On the other hand, if $\mathscr{U}\neq 0$, we have
$f_{*}\mathscr{J}=\mathscr{U}\oplus\mathscr{T}$, so $\textrm{deg}(f_{*}\mathscr{J})\geq \textrm{deg}(\mathscr{U})$, and it is enough to give a bound for $\mathscr{U}$.
Observe that we have a surjective morphism $f_{*}(\mathscr{\omega}_{X})\twoheadrightarrow \mathscr{U}$, and that 
\begin{align*}
f_{*}(\mathscr{\omega}_{X})=\bigoplus_{i=1}^{h}\mathscr{O}(a_{i}), \
\mathscr{U}=\bigoplus_{i=1}^{T}\mathscr{O}(b_{j}).
\end{align*}
From the above surjection, we deduce that for every $j=1,\hdots T$ there exists an index $i=1,\hdots h$ such that $b_{j}\geq a_{i}$. Therefore,
$$
\textrm{deg}(\mathscr{U})=\sum_{i=1}^{T}b_{i}\geq \sum_{l=1}^{T}a_{i_{k}}\geq T\cdot A,
$$
where $A:=\textrm{min}\{a_{i}\}$. Now, by Proposition \ref{uniform1}, there exists an integer $S=S(g,h,m)$ depending only on $g,h,m:=h^{0}(X,\mathscr{\omega}_{X})=g$ such that $A\geq S$. Therefore, $\textrm{deg}(\mathscr{U})\geq T\cdot S\geq T_{0}\cdot S$ where $T_{0}:=\textrm{min}\{0,h\cdot S\}$. Set $C_0=\textrm{min}\{T_o\cdot S,0\}$. Then, from Equation (\ref{ineq}), we get
\begin{equation}\label{ineqmu}
2g-2\geq k+B_{0}+C_{0}.
\end{equation}
Let $N_{0}'$ be the smaller integer such that $N_{0}'+B_{0}+C_{0}>2g-2$. Then $h^{1}(X,\mathscr{F}(k))=0$ for all $k\geq N_{0}'$ and $N_{0}'$ only depends on $P, C,g,h$.

For the last part, let $x\in X$ be a closed point and $\mathscr{I}_{x}$ its ideal sheaf. Define $\mathscr{G}:=\textrm{Im}(\mathscr{F}\otimes_{\mathscr{O}_{X}}\mathscr{I}_{x}\rightarrow \mathscr{F})$. We have $\mu_{max}(\mathscr{G})\leq C$ and 
$$
P_{\mathscr{G}}(n)=P_{\mathscr{F}}(n)-d(x), \textrm{ where }d(x):=\textrm{dim}\mathscr{F}_{x}/\mathfrak{m}_{x}\mathscr{F}_{x}.
$$
By Lemma \ref{bound-tor}, the function $d(x)$ is bounded from above by a constant $B_{\tau}$ that depends only on $\underline{P}$. We can argue now as above and we arrive at the equation
$$
2g-2\geq k+B_{0}-B_{\tau}+C_{0}.
$$
Let $N_{0}(\geq N_{0}')$ be the smallest integer such that $N_{0}+B_{0}-B_{\tau}+C_{0}>2g-2$. Then $h^{1}(X,\mathscr{F}(-x)(k))=h^{1}(X,\mathscr{F}(k))=0$ for all $k\geq N_{0}$, so $\mathscr{F}(k)$ is generated by its global sections and $h^{1}(X,\mathscr{F}(k))=0$ for every $k\geq N_{0}$ and $N_{0}$ only depends on $\underline{P}, C,g,h$.
\end{proof}

 \begin{corollary}\label{bound4}
Let $g,h,C,C'\in\mathbb{N}$ with $g\geq 2$ and $\underline{P}$ a finite set of polynomials of degree one with integral coefficients. There is a natural number $N_{1}$ depending only on $g,h,C,C',\underline{P}$ such that for every Cohen-Macaulay projective and connected curve of genus $g$ with a very ample line bundle $\mathscr{O}_{X}(1)$ of degree $h$, every family $E$ of equivalence classes of coherent sheaves of pure dimension one, $\mathscr{F}$, with Hilbert polynomial in $\underline{P}$ that satisfy $\mu_{\textrm{max}}(\mathscr{F})\leq C$ and every family $E'$ of equivalence classes of subsheaves $ \mathscr{F}'\subset \mathscr{F}$, with $ \mathscr{F}\in E$, such that $|\emph{deg}( \mathscr{F}')|\leq C'$, we have $h^{1}(X,\mathscr{F'}(k))=0$ for all $k\geq N_{1}$ and $\mathscr{F'}(k)$ is generated by its global sections. Therefore, $\emph{reg}(\mathscr{F'})\leq N_{1}$ and the family is bounded.
 \end{corollary}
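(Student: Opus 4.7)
The plan is to deduce this corollary as a direct application of Theorem \ref{bound3}, with the family $E'$ playing the role that $E$ played there. The only content is to verify that $E'$ satisfies the hypotheses of Theorem \ref{bound3}: namely, that every member is of pure dimension one, that $\mu_{\textrm{max}}$ is uniformly bounded on $E'$ by the same constant $C$, and that the Hilbert polynomials of members of $E'$ lie in some finite set of degree-one polynomials that depends only on the data $g,h,C',\underline{P}$.

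The first two hypotheses come essentially for free. Any nonzero subsheaf of a pure dimension one sheaf is itself of pure dimension one, since its subsheaves are subsheaves of the ambient sheaf; hence every $\F' \in E'$ is of pure dimension one. For the slope bound, let $\F'_1 \subseteq \F'$ denote the first term of the Harder--Narasimhan filtration of $\F'$. Viewing $\F'_1$ as a subsheaf of the ambient $\F \in E$ and using the inequality $\mu(\mathscr{G}) \leq \mu_{\textrm{max}}(\F)$ for every subsheaf $\mathscr{G} \subset \F$ recorded just before Lemma \ref{uniform1}, we obtain
\[
\mu_{\textrm{max}}(\F') \;=\; \mu(\F'_1) \;\leq\; \mu_{\textrm{max}}(\F) \;\leq\; C.
\]

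The remaining and only computational point is bounding the set of Hilbert polynomials of $\F' \in E'$. Since $\F'$ has pure dimension one, its Hilbert polynomial has the form $P_{\F'}(n) = \alpha' n + \chi(\F')$, so it is determined by the pair $(\alpha',\chi(\F'))$. Additivity of Hilbert polynomials in the short exact sequence $0 \to \F' \to \F \to \F/\F' \to 0$ forces $\alpha' \leq \alpha_{\F}$, because the leading coefficient of the Hilbert polynomial of the quotient is nonnegative. Consequently $\alpha' \in \{1,\dots,\alpha_{\textrm{max}}\}$, where $\alpha_{\textrm{max}}$ is the largest multiplicity occurring among the polynomials in $\underline{P}$. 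Combined with the relation $\chi(\F') = \deg(\F') + (\alpha'/h)(1-g)$ and the hypothesis $|\textrm{deg}(\F')| \leq C'$, this confines $\chi(\F')$ to a finite set of integers whose size depends only on $g,h,C',\underline{P}$. Therefore the set $\underline{P}'$ of Hilbert polynomials realized by members of $E'$ is a finite set depending only on $g,h,C',\underline{P}$.

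Applying Theorem \ref{bound3} to the family $E'$ with input data $g,h,C,\underline{P}'$ now produces a natural number $N_{0}=N_{0}(\underline{P}',C,g,h)$ with the required vanishing and global generation for every $\F'\in E'$ and every $k\geq N_{0}$. Setting $N_{1}:=N_{0}$ and tracing the dependence of $\underline{P}'$ through the previous paragraph shows that $N_{1}$ depends only on $g,h,C,C',\underline{P}$. I do not anticipate any genuine obstacle here: the whole argument is bookkeeping verifying that subsheaves of bounded degree inherit the hypotheses of Theorem \ref{bound3} in a uniform way.
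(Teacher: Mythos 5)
Your proposal is correct and follows exactly the paper's route: the paper's own proof is the same two-step observation (subsheaves inherit the bound $\mu_{\textrm{max}}(\F')\leq C$, and $|\deg(\F')|\leq C'$ together with the bound on multiplicities forces the Hilbert polynomials of members of $E'$ into a finite set) followed by an appeal to Theorem \ref{bound3}. You have merely spelled out the steps the paper labels ``obviously,'' and your verifications are accurate.
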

\begin{proof}
Let $ \mathscr{F}\in E$ and $ \mathscr{F}'\subset  \mathscr{F}$ a coherent sheaf in $E'$. We have $\mu_{\textrm{max}}( \mathscr{F}')\leq C$. Since $|\textrm{deg}( \mathscr{F}')|\leq C'$, there are only finitely many possible polynomials in the set of Hilbert polynomials of the members of $E'$. Then,  applying Theorem \ref{bound3}, we conclude.
\end{proof}

 \begin{corollary}\label{bound5}
Let $g,h,C,a\in\mathbb{N}$ with $g\geq 2$ and $\underline{P}$ a finite set of polynomials of degree one with integral coefficients. There is a natural number $N_{1}'$ depending only on $g,h,C,a,\underline{P}$ such that for every Cohen-Macaulay projective and connected curve of genus $g$ with a very ample line bundle $\mathscr{O}_{X}(1)$ of degree $h$ and every family $E$ of equivalence classes of coherent sheaves of pure dimension one, $\mathscr{F}$, with Hilbert polynomial in $\underline{P}$ that satisfy $\mu_{\textrm{max}}(\mathscr{F})\leq C$
we have $h^{1}(X,(\mathscr{F}_1\otimes\cdots\otimes\mathscr{F}_a)(k))=0$ for all $k\geq N_{1}'$ and $(\mathscr{F}_1\otimes\cdots\otimes\mathscr{F}_a)(k)$ is generated by its global sections. Therefore, $\emph{reg}(\mathscr{F}_1\otimes\cdots\otimes\mathscr{F}_a)\leq N_{1}'$ and the family $E^{\otimes a}$ is bounded.
 \end{corollary}
\begin{proof}
By Theorem \ref{bound3},
it is enough to show that if $\mathscr{F}, \mathscr{G}$ are coherent sheaves of $\mathscr{O}_{X}$-modules that are $k_1$- and $k_2$-regular respectively, then $ \mathscr{F}\otimes \mathscr{G}$ is $(k_1+k_2)$-regular. Let $j:X\hookrightarrow \mathbb{P}^{n}$ be a closed immersion such that $j^{*}\mathscr{O}_{\mathbb{P}^{n}}(1)=\mathscr{O}_{X}(1)$. Since $j:X\hookrightarrow \mathbb{P}^{n}$ is a finite morphism, we know that $H^{i}(X, \mathscr{F}(n))=H^{i}(\mathbb{P}^{n},j_{*}( \mathscr{F})(n))$ and $H^{i}(X, \mathscr{G}(n))=H^{i}(\mathbb{P}^{n},j_{*}( \mathscr{G})(n))$. This implies that $j_{*} \mathscr{F}$ and $j_{*} \mathscr{G}$ are $k_1$- and $k_2$-regular respectively, and obviously $(j_{*} \mathscr{F})_{y}=(j_{*} \mathscr{G})_{y}=0$ for every $y\in\mathbb{P}^{n}\setminus X$. Therefore, by \cite[Proposition 1.5]{sidman}, $j_{*} \mathscr{F}\otimes j_{*} \mathscr{G}$ is $(k_1+k_2)$-regular. That is, 
$$H^{1}(\mathbb{P}^{n},(j_{*} \mathscr{F}\otimes j_{*} \mathscr{G})(k_1+k_2-1))=0$$ 
for every $i\geq 0$. Since $j$ is a closed immersion, we deduce that $j_{*} \mathscr{F}\otimes j_{*} \mathscr{G}=j_{*}( \mathscr{F}\otimes  \mathscr{G})$ and, therefore,
\begin{align*}
\begin{split}
H^{1}(X,( \mathscr{F}\otimes \mathscr{G})(k_1+k_2-1))&=H^{1}(\mathbb{P}^{n},j_{*}(( \mathscr{F}\otimes \mathscr{G})(k_1+k_2-1)))=\\
&=H^{1}(\mathbb{P}^{n},j_{*}( \mathscr{F}\otimes \mathscr{G}))(k_1+k_2-1))=\\
&=H^{1}(\mathbb{P}^{n},(j_{*} \mathscr{F}\otimes j_{*} \mathscr{G})(k_1+k_2-1))=0
\end{split}
\end{align*}
\end{proof}
Let $F$ be a finite dimensional vector space, $X$ a Cohen-Macaulay projective and connected curve of genus $g$,  $\mathscr{O}_{X}(1)$ a very ample line bundle of degree $h$ and let $k\in\mathbb{Z}$ be an integer. Given a quotient $q:F\otimes\mathscr{O}_{X}(k)\rightarrow \mathscr{F}$ and a vector subspace $F'\subset F$, we will denote by $\mathscr{F}_{F'}$ the subsheaf $q(F'\otimes\mathscr{O}_{X}(k))$.

\begin{corollary}\label{gitsemistability}
Let $F$ be a finite dimensional vector space, $k\in\mathbb{Z}$, $l_0\in\mathbb{N}$, $P(n)\in\mathbb{Z}[n]$ and  $X$ a Cohen-Macaulay projective and connected curve of genus $g\geq 2$ with a very ample line bundle $\mathscr{O}_{X}(1)$ with $h^{1}(X,\mathscr{O}_{X}(l_0))=0$. Then, there exists a natural number $L\in\mathbb{Z}$ depending only on $k,l_0,g,P(n)$ such that
for every quotient sheaf $q:F\otimes\mathscr{O}_{X}(k)\rightarrow \mathscr{F}$ with Hilbert polynomial $P(n)$ and for every vector subspace $F'\subset F$, $h^{1}(X,\mathscr{F}_{F'}(l))=0$ and $H^{0}(q(l))(F'\otimes W)=H^{0}(X, \mathscr{F}_{F'}(l))$ for every $l>L$, where $W=H^{0}(X,\mathscr{O}_{X}(l+k))$.
 \end{corollary}
\begin{proof}
Let $X$ be a Cohen-Macaulay projective and connected curve of genus $g\geq 2$ together with a very ample line bundle $\mathscr{O}_{X}(1)$ with the conditions of the statement, let $q:F\otimes\mathscr{O}_{X}(k)\rightarrow \mathscr{F}$ be a quotient sheaf with Hilbert polynomial $P(n)$ and let $F'\subset F$ be a vector subspace.
Consider the exact sequence 
$$
\xymatrix{
0\ar[r] & \mathscr{K}_{F'}(l)\ar[r]  & F'\otimes\mathscr{O}_{X}(l+k) \ar[r]& \mathscr{F}_{F'}(l) \ar[r] & 0
}
$$
Taking global sections we get a surjection
$$
F'\otimes H^{1}(X,\mathscr{O}_{X}(l+k))\longrightarrow   H^{1}(X,\mathscr{F}_{F'}(l))\longrightarrow 0
$$
Clearly $h^{1}(X,\mathscr{O}_{X}(p))=0$ for every $p\geq l_0$ since $h^{1}(X,\mathscr{O}_{X}(l_0))=0$. Therefore, $l>-k+l_0$ implies $h^{1}(X,\mathscr{F}_{F'}(l))=0$. On the other hand, it follows from Lemma \ref{bound2-3} that every sheaf $\mathscr{K}$ of the form $\textrm{Ker}(q)$ for some $q:F\otimes\mathscr{O}_{X}(k)\rightarrow \mathscr{F}$ as in the statement satisfies that $\mu_{max}(\mathscr{K})$ is bounded from above by a constant depending only on the numerical input data and there is a finite set $\underline{P}$ of polynomials with integral coefficients, depending only on the numerical input data as well, to which $P_{\mathscr{H}}(n)$ belongs. Now, by Theorem \ref{bound3}, we deduce that there is a constant $l_{1}\in\mathbb{N}$, depending only on the numerical input data such that for every $l\geq l_{1}$,  $h^{1}(X,\mathscr{K}_{F'}(l))=0$ and $\mathscr{K}_{F'}(l)$ is generated by its global sections. Thus, for every $l\geq L:=\textrm{max}\{-k+l_0,l_1\}$ we have $H^{0}(q(l))(F'\otimes W)=H^{0}(X, \mathscr{F}_{F'}(l))$.
\end{proof}

\subsection{Uniform boundedness of $\delta$-semistable swamps}


Let $X$ be a Cohen-Macaulay projective and connected curve of genus $g$. We define $\delta$-semistability for swamps by simply substituting the ranks by the multiplicities of the coherent sheaves \cite{AMC}. A direct application of Theorem \ref{bound3} will allow us to prove Theorem \ref{bound10} which, in turn, will imply that the families of $\delta$-semistable swamps of fixed type are uniformly bounded along with the moduli space of stable curves of genus $g$.

 \begin{definition}
Let $a,b\in\mathbb{N}$ and $\mathscr{D}$ an invertible sheaf. A swamp over $X$ of type $(a,b, \mathscr{D})$ is a pair $( \mathscr{F},\phi)$ where $ \mathscr{F}$ is a coherent $\mathscr{O}_{X}$-module and $\phi$ is a non-zero  morphism of $\Ox$-modules, $\phi\colon ( \mathscr{F}^{\otimes a})^{\oplus b}\rightarrow \mathscr{D}$.
 \end{definition}

 \begin{definition}
Let $ \mathscr{F}$ be a coherent $\mathscr{O}_{X}$-module on $X$. A weighted filtration, $( \mathscr{F}_{\bullet},\underline{m})$, of $ \mathscr{F}$ is a filtration
$ \mathscr{F}_{\bullet}\equiv (0)\subset \mathscr{F}_{1}\subset \mathscr{F}_{2}\subset\hdots\subset  \mathscr{F}_{t}\subset \mathscr{F}_{t+1}= \mathscr{F}$,
equipped with positive numbers $m_{1}\hdots,m_{t}\in\mathbb{Q}_{>0}$.
 \end{definition}
We adopt the following convention: the one-step filtration is always equipped with $m=1$. A filtration is called saturated if the quotients $ \mathscr{F}/ \mathscr{F}_{i}$ are coherent sheaves of pure dimension one.
 
Let $\phi\colon ( \mathscr{F}^{\otimes a})^{\oplus b}\rightarrow \mathscr{D}$ be a swamp on $X$ and let $( \mathscr{F}_{\bullet},\underline{m})$ be a weighted filtration. For each $ \mathscr{F}_{i}$ denote by $\alpha_{i}$ its multiplicity  and just by $\alpha$ the multiplicity of $ \mathscr{F}$. Define the vector 
\begin{equation*}
  \Gamma=\sum_{1}^{t}m_{i}  \Gamma^{(\alpha_{i})},
\end{equation*}
where $  \Gamma^{(l)}=(\overset{l}{\overbrace{l-\alpha,\hdots,l-\alpha}},\overset{\alpha-l}{\overbrace{l,\hdots,l}})$. Let us denote by $J$ the set
\begin{equation*}
J=\{\textrm{ multi-indices }I=(i_{1},\hdots,i_{a})|I_{j}\in\{1,\hdots,t+1\}\}.
\end{equation*}
Define
\begin{equation}
\mu( \mathscr{F}_{\bullet},\underline{m},\phi)= \textrm{min}_{I\in J}\{  \Gamma_{\alpha_{i_{1}}}+\hdots +  \Gamma_{\alpha_{i_{a}}}|\phi|_{( \mathscr{F}_{i_{1}}\otimes \hdots\otimes  \mathscr{F}_{i_{a}})^{\oplus b}}\neq 0\}.
\end{equation}

 \begin{definition}\label{semitensor}
Let $\delta\in\mathbb{Q}_{>0}$ be a positive rational number. A swamp $( \mathscr{F},\phi)$ of type $(a,b,\mathscr{D})$ is $\delta$-(semi)stable if for each weighted filtration $( \mathscr{F}_{\bullet},\underline{m})$ the following holds
\begin{equation}\label{semiestabilidad}
\sum_{1}^{t}m_{i}(\alpha P_{ \mathscr{F}_{i}}-\alpha_{i}P)+ \delta\mu( \mathscr{F}_{\bullet},\underline{m},\phi) (\leq)0.
\end{equation}
 \end{definition}

\begin{remark} \label{finiteset}
There is a positive integer $A$, depending only on the numerical input data $P,a$, such that it is enough to check the $\delta$-semistability condition (\ref{semiestabilidad}) for weighted filtrations with $m_{i}<A$.
Note that a swamp is $\delta$-(semi)stable if and only if Equation (\ref{semiestabilidad}) holds for every integral weighted filtration, i.e., filtrations with integral weights. Now, the claim follows from \cite[Lemma 1.4]{sols} changing ranks by multiplicities. Observe that the upper bound $A$ does not depend either on $b$ or on the sheaf $ \mathscr{D}$.
 \end{remark}

 \begin{theorem}\label{bound10}
Let $a,g,h\in\mathbb{N}$ with $g\geq 2$ and let $P(n)\in\mathbb{Z}[n]$ be a polynomial of degree one and $\delta\in\mathbb{Q}_{>0}$. There exists a natural number $N_{2}\in\mathbb{N}$ depending only on $P(n),a,\delta,g,h$ such that for every Cohen-Macaulay projective and connected curve of genus $g$, $X$, with very ample line bundle $\mathscr{O}_{X}(1)$ of degree $h$ and for every coherent sheaf of pure dimension one $ \mathscr{F}$ with Hilbert polynomial $P(n)$ appearing in a $\delta$-(semi)stable swamp of type $(a,-,-)$, $\mathscr{F}(k)$ is generated by its global sections and $h^{1}(X,\mathscr{F}(k))=0$ for every $k\geq N_{2}$. In particular, the set $E$ of equivalence classes of coherent sheaves of pure dimension one with Hilbert polynomial $P(n)$ appearing in $\delta$-semistable swamps of type $(a,-,-)$ is bounded.
 \end{theorem}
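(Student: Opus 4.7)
The plan is to reduce Theorem \ref{bound10} to Theorem \ref{bound3}. It will suffice to exhibit, for every $\delta$-semistable swamp $(\F,\phi)$ of type $(a,-,-)$ and Hilbert polynomial $P(n)$, a uniform upper bound $C$ on $\mu_{\textrm{max}}(\F)$ depending only on $P,a,\delta$. Once such a $C$ is in hand, Theorem \ref{bound3} applied with the singleton family $\underline{P}=\{P(n)\}$ immediately produces the integer $N_{2}$, together with the global generation of $\F(k)$, the vanishing $h^{1}(X,\F(k))=0$ for all $k\ge N_{2}$, and the boundedness of the whole family.

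To build $C$, I would feed the semistability condition of Definition \ref{semitensor} a single, judiciously chosen weighted filtration: the one-step filtration $0\subset \F_{1}\subset \F$ equipped with the weight $m=1$, where $\F_{1}$ is the first term of the Harder--Narasimhan filtration of $\F$. By construction $\mu(\F_{1})=\mu_{\textrm{max}}(\F)$. For this filtration the vector $\Gamma=\Gamma^{(\alpha_{1})}$ has its first $\alpha_{1}$ entries equal to $\alpha_{1}-\alpha$ and its remaining $\alpha-\alpha_{1}$ entries equal to $\alpha_{1}$. A short combinatorial inspection of the admissible multi-indices $I=(i_{1},\ldots,i_{a})$ with $i_{j}\in\{1,2\}$ would show that if exactly $k$ of the $i_{j}$ equal $1$, the associated sum is $k(\alpha_{1}-\alpha)+(a-k)\alpha_{1}=a\alpha_{1}-k\alpha\ge -a(\alpha-\alpha_{1})$. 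Hence
\[
\mu(\F_{\bullet},1,\phi)\ \ge\ -a(\alpha-\alpha_{1}),
\]
independently of where $\phi$ is nonzero.

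Next, since $P_{\F_{1}}$ and $P$ have identical leading behavior (up to the factor $\alpha_{1}$ vs.\ $\alpha$), the polynomial $\alpha P_{\F_{1}}-\alpha_{1}P$ reduces to the constant $\alpha\chi(\F_{1})-\alpha_{1}\chi(\F)$, so the semistability inequality collapses to
\[
\alpha\chi(\F_{1})-\alpha_{1}\chi(\F)\ \le\ -\delta\mu(\F_{\bullet},1,\phi)\ \le\ a\delta(\alpha-\alpha_{1}).
\]
Dividing by $\alpha\alpha_{1}$ and using $(\alpha-\alpha_{1})/(\alpha\alpha_{1})\le 1/\alpha_{1}\le 1$ would then give
\[
\mu_{\textrm{max}}(\F)=\mu(\F_{1})\ \le\ \mu(\F)+a\delta\ =\ \frac{P(0)}{\alpha}+a\delta\ =:\ C,
\]
which depends solely on $P, a, \delta$.

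Finally, plugging $C$ and $\underline{P}=\{P(n)\}$ into Theorem \ref{bound3} will conclude the argument and yield the desired $N_{2}$. I do not anticipate any serious obstacle: the entire proof rests on the correct evaluation of $\mu(\F_{\bullet},1,\phi)$ from Definition \ref{semitensor}, a finite combinatorial check, combined with the already established uniform boundedness of Theorem \ref{bound3}.
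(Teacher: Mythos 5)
Your proposal is correct and follows essentially the same route as the paper: test the semistability inequality against a one-step filtration $0\subset\F_{1}\subset\F$, observe that the polynomial term degenerates to a constant and that $\mu(\F_{\bullet},1,\phi)\geq -a(\alpha-\alpha_{1})$, deduce a bound $\mu_{\textrm{max}}(\F)\leq\mu(\F)+a\delta$ depending only on $P,a,\delta$, and then invoke Theorem \ref{bound3}. The only cosmetic differences are that the paper tests an arbitrary subsheaf rather than the first Harder--Narasimhan term and records the marginally sharper constant $\mu(\F)+\frac{a(\alpha-1)}{\alpha}\delta$, neither of which affects the argument.
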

 \begin{remark}
The notation $(a,-,-)$ means that we allow swamps $\phi:( \mathscr{F}^{\otimes a})^{b}\rightarrow  \mathscr{D}$, whatever $b\in\mathbb{N}$ and $ \mathscr{D}\in\textrm{Pic}(X)$ are.
 \end{remark}
\begin{proof}
Let $X$ be a Cohen-Macaulay projective and connected curve of genus $g$ $b\in\mathbb{N}$ a natural number, $ \mathscr{D}$ an invertible sheaf and $( \mathscr{F},\phi)$ a $\delta$-semistable swamp of type $(a,b, \mathscr{D})$ with Hilbert polynomial $P(n)$. Let $ \mathscr{F}_{1}\subset  \mathscr{F}$ be a subsheaf and consider the one-step flag $0\subset \mathscr{F}_{1}\subseteq \mathscr{F}_{2}= \mathscr{F}$. The computation of the semistability condition given in  Equation (\ref{semiestabilidad}) leads to
$$\mu( \mathscr{F}_{1})\leq C:=\mu( \mathscr{F})+\dfrac{a(\alpha-1)}{\alpha }\delta$$
$C$ being a constant depending only on $P,h,a,\delta,g$. Now, by Theorem \ref{bound3}, there exists a natural number $N_{2}\in\mathbb{N}$ depending only on $P(n), C, g, h$ (thus, on $P(n),a,$ $\delta,g,h$) such that $h^{1}(X, \mathscr{F}(k))=0$ and $ \mathscr{F}(k)$ is generated by its global sections for every $k\geq N_{2}$. Finally, by  (\cite[Lemma 1.7.6]{Huyb}, the set $E$ of equivalence classes of coherent sheaves of pure dimension one with Hilbert polynomial $P(n)$ appearing in $\delta$-semistable swamps of type $(a,-,-)$ is bounded.
\end{proof}

\subsection{Comparision of $\delta$-semistability and GIT semistability}\label{comparision}
Our goal is to prove Theorem \ref{teoremon}.
The construction of the moduli space of swamps, $\mathcal{T}^{\delta\text{-(s)s}}_{X,P,a,b,\mathscr{D}}$, for a single stable curve $X$ of genus $g$ has been done in \cite{L} for any a projective scheme of pure dimension one. However, we will go over the key steps of such construction to exhibit how Theorem \ref{bound3} shows the uniformity along with $\overline{\textrm{M}}_{g}$ of the parameters $N,L$ involved in the construction. This, together with \cite[Theorem 5.4]{AMC}, eventually shows the main property of the projective scheme that will be constructed.

Let $a,b,g,h,l_0\in\mathbb{N}$ with $g\geq 2$ and let $P(n)\in\mathbb{Z}[n]$ be a polynomial of degree one and $\delta\in\mathbb{Q}_{>0}$
Let $X$ be a Cohen-Macaulay projective and connected curve of genus $g$ with a very ample line bundle of degree $h$ such that $h^{1}(X,\mathscr{O}_{X}(l_0))=0$ for a fixed natural number $l_o\in\mathbb{N}$. Let  $D\in\mathbb{N}$ be a natural number, $e\in\mathbb{Z}$ an integer and let $ \mathscr{D}$ be an invertible sheaf on $X$ of degree $e$ with $h^{0}(X, \mathscr{D})=D$.  Given $k\in\mathbb{N}$, let $\mathcal{H}\subset\textrm{Quot}^{P}_{\mathbb{C}^{P(k)}\otimes\mathscr{O}_{X}(-k)/X/\mathbb{C}}$ be the subscheme of the Quot scheme parametrizing quotients $F\otimes\Ox(-k)\rightarrow  \mathscr{F}$ of pure dimension one, with $F=\mathbb{C}^{P(k)}$. By Theorem \ref{bound3} and  Lemma \ref{uniform1}, there is a natural number $N'\in\mathbb{Z}$, depending only on the numerical input data, such that for every $k\geq N'$ and for each $\delta$-(semi)stable swamp, $( \mathscr{F},\phi)$, of type $(a,b, \mathscr{D})$, we have that $ \mathscr{F}(k)$ and $\mathscr{D}(k)$ are generated by its global sections, and  $h^{1}(X, \mathscr{F}(k))=h^{1}(X, \mathscr{D}(k))=0$.

Fix such a natural number $k>N'$ and set $F=\mathbb{C}^{P(k)}$. Let $L$ be as in Lemma \ref{gitsemistability}. For $l\geq L$, there is a projective embedding (Grothendieck embedding composed with the Pl\"ucker embedding),
\begin{equation*}
\mathcal{H}\hookrightarrow \mathbf{P}(\bigwedge^{P(l)}(F\otimes H^{0}(X,\Ox(l-k)))).
\end{equation*}
Consider the vector space  $Z_1=((F^{\otimes a})^{\oplus b})^{\vee}\otimes H^{0}(X, \mathscr{D}(ak))$.
The functor of points of the projective space $\mathbf{P}(Z_1)$ is given by
\begin{equation}
\mathbf{P}(Z_1)^{\bullet}(T)=\left\{
\begin{array}{l}
\textrm{ equivalence classes of invertible quotients }\\
((F^{\otimes a})^{\oplus b}\otimes H^{0}(X, \mathscr{D}(ak))^{\vee})\otimes\mathcal{O}_{T}\rightarrow \mathcal{L} \textrm{ over } T \\
\end{array}\right\}.
\end{equation}

Consider now  the functor (for $k$ fixed as in the introduction)
\begin{equation}\label{planta}
^{\textrm{rig}}\textbf{Swamps}^{k}_{P, \mathscr{D},a,b}(T)=\left\{
\begin{array}{l}
\textrm{isomorphism classes of tuples }( \mathscr{F}_{T},\phi_{T},\N,g_{T})\\
\textrm{where }( \mathscr{F}_{T},\phi_{T},\N) \textrm{ is a swamp with }\\
\textrm{Hilbert polynomial } P \textrm{ and }g_{T}\textrm{ is a morphism}\\
g_{T}\colon F\otimes\mathscr{O}_{T}\rightarrow \pi_{T*} \mathscr{F}_{T}(k)\textrm{ such that} \\
\textrm{the induced morphism }V\otimes\mathscr{O}_{X\times T}\rightarrow \mathscr{F}_{T}(k)\\
\textrm{is surjective }
\end{array}\right\},
\end{equation}
where two tuples, $( \mathscr{F}_{T},\phi_{T},\N,g_{T})$ and $( \mathscr{F}'_{T},\phi'_{T},\N',g'_{T})$, are isomorphic if there exists an isomorphism $(f,\alpha)$  between $( \mathscr{F}_{T},\phi_{T},\N)$ and $( \mathscr{F}'_{T},\phi'_{T},\N')$ such that $\pi_{T*}(f(k))\circ g_{T}=g'_{T}$.
With the same argument given in the case of irreducible curves (see \cite{bosle,sols}) it follows that the functor $^{\emph{rig}}\textbf{Swamps}^{k}_{P,\mathscr{D},a,b}$ is represented by a closed subscheme $W_{P,\mathscr{D},a,b}^{k,l}(X)$ of $\mathcal{H}\times\mathbf{P}(Z_1)$.
Let $Z_{P,\mathscr{D},a,b}^{k,l}(X)\subset W_{P,\mathscr{D},a,b}^{k,l}(X)$ be the closure of the locus representing $\delta$-semistable swamps. Consider the projections
\begin{align*}
p_{\mathcal{H}}&\colon Z_{P,\mathscr{D},a,b}^{k,l}(X)\rightarrow\mathcal{H}\\
p_{\mathcal{P}}&\colon Z_{P,\mathscr{D},a,b}^{k,l}(X)\rightarrow\mathbf{P}(Z_1)
\end{align*}
and define a polarization on $Z_{P,\mathscr{D},a,b}^{k,l}(X)$ by
\begin{equation}
\mathscr{O}_{Z_{k,\mathscr{D}}(X)}(n_{1},n_{2}):=p_{\mathcal{H}}^{*} \mathcal{O}_{\mathcal{H}}(n_{1})\otimes p_{\mathbf{P}(Z_1)}^{*}\mathscr{O}_{\mathbf{P}(Z_1)}(n_{2}),
\end{equation}
$n_{1}$ and $n_{2}$ being positive integers such that
\begin{equation}\label{polarization}
\dfrac{n_{1}}{n_{2}}=\dfrac{P(l)-\textrm{dim}(F)}{\textrm{dim}(F)-s\delta} \delta.
\end{equation}
The natural action of $\textrm{SL}(F)$ on $\mathcal{H}\times\mathbf{P}(Z_1)$ preserves $Z_{P,\mathscr{D},a,b}^{k,l}(X)$ and the linearizations on $\mathscr{O}_{\mathcal{H}}(1)$ and $\mathscr{O}_{\mathbf{P}(Z_1)}(1)$ induce a linearization on $\mathscr{O}_{Z_{P,\mathscr{D},a,b}^{k,l}(X)}(n_{1},n_{2})$. Then, the GIT quotient $Z_{P,\mathscr{D},a,b}^{k,l}(X)\slas \textrm{SL}(F)$ exists and is projective, and if $k,l$ are large enough it is a coarse moduli space for $\delta$-semistable swamps of the corresponding type \cite{L}.

Our aim is to show the following result, which shows that $k,l$ ca be chosen so that they work for every Cohen-Macaulay projective and connected curve.

 \begin{theorem}\label{teoremon}
Let $g,h,D,a,b\in\mathbb{N}$, with $g\geq 2$ and $P(n)\in\mathbb{Z}[n]$ a polynomial of degree one . There are natural numbers $N \ (\geq N'),L\in\mathbb{N}$ depending only on the numerical input data such that for every $k\geq N$ and every $l\geq L$ the following holds:
for every Cohen-Macaulay projective and connected curve of genus $g$ with a very ample line bundle $\mathscr{O}_{X}(1)$ of degree $h$, the point $(q,\Phi)\in Z_{P,\mathscr{D},a,b}^{k,l}(X)$ is $\emph{GIT}$-(semi)stable with respect to $\mathscr{O}_{Z_{P,\mathscr{D},a,b}^{k,l}(X)}(n_{1},n_{2})$ if and only if the corresponding swamp $( \mathscr{F},\phi)$ is $\delta$-(semi)stable and the linear map $f_{q}\colon F\rightarrow H^{0}(X, \mathscr{F}(k))$ is an isomorphism, where $F:=\mathbb{C}^{P(k)}$.
 \end{theorem}

We will go over the key results that allows to prove Theorem \ref{teoremon} skipping some details to avoid repetition. We refer to the reader to \cite{sols} and \cite{AMCdis} for a detailed exposition of the next results.

\subsubsection{Step 1: $\delta$-semistability and sectional semistability}

Let $X$ be a Cohen-Macaulay projective and connected curve of genus $g$, $ \mathscr{F}$ a coherent $\mathscr{O}_{X}$-module, and suppose we have a filtration, $ \mathscr{F}_{\bullet}$, of $ \mathscr{F}$. We will denote by $\alpha^{i}$ the multiplicity of $ \mathscr{F}/ \mathscr{F}_{i}$ and by $\alpha_{i}$ the multiplicity of $ \mathscr{F}_{i}$ (thus, $\alpha( \mathscr{F})=\alpha_{i}+\alpha^{i}$). Let now $P(n)\in\mathbb{Z}[n]$ be a polynomial of degree one, $\alpha, \ d$ rational numbers such that $P(n)=\alpha n+\dfrac{\alpha}{h}(1-g)+d$, and $k$ a natural number. Then, we define:
\begin{enumerate}
\item $S^{s}$ is the set of $\delta$-semistable swamps $( \mathscr{F},\phi)$ with $ \mathscr{F}$ a coherent sheaf of pure dimension one with Hilbert polynomial $P$.

\item $S'_{k}$ is the set of swamps $( \mathscr{F},\phi)$ with $ \mathscr{F}$ a coherent sheaf of pure dimension one with Hilbert polynomial $P$, and such that 
\begin{equation*}
\sum_{i=1}^{t}m_{i}(\alpha h^{0}(X, \mathscr{F}_{i}(k))-\alpha_{i}P(k)))+\delta\mu( \mathscr{F}_{\bullet},\underline{m},\phi)\leq 0
\end{equation*}
for every weighted filtration $( \mathscr{F}_{\bullet},\underline{m})$.

\item $S''_{k}$ is the set of swamps $( \mathscr{F},\phi)$ with $ \mathscr{F}$ a coherent sheaf of pure dimension one with Hilbert polynomial $P$, and such that 
\begin{equation*}
\sum_{i=1}^{t}m_{i}(\alpha^{i}P(k)-\alpha h^{0}(X, \mathscr{F}^{i}(k)))+\delta\mu( \mathscr{F}_{\bullet},\underline{m},\phi)\leq 0,
\end{equation*}
for every weighted filtration $( \mathscr{F}_{\bullet},\underline{m})$.

\item $S_{N}=(\cup_{k\geq N}S''_{k})\cup S^{s}$, $N\in\mathbb{N}$.

\end{enumerate}

  \begin{lemma}\label{lemma 1,2}
Let $a,g,h\in\mathbb{N}$ with $g\geq 2$ and let $P(n)\in\mathbb{Z}[n]$ be a polynomial of degree one and $\delta\in\mathbb{Q}_{>0}$. There exists  natural numbers $N_{3},C_{0}\in\mathbb{N}$ depending only on $P(n),a,\delta,g,h$ such that for every Cohen-Macaulay projective and connected curve of genus $g$, $X$, with very ample line bundle $\mathscr{O}_{X}(1)$ of degree $h$, if $( \mathscr{F},\phi)\in S_{N}$ is a swamp of type $(a,-,-)$ then, for all saturated weighted filtrations $( \mathscr{F}_{\bullet},\underline{m})$ and for all $C\geq C_0$, the following holds for all $i$:
\begin{equation*}
\emph{deg}( \mathscr{F}_{i})-\alpha_{i}\mu_{s}\leq C,\textrm{ where } \mu_{s}:=\dfrac{d-a\delta}{\alpha}\textrm{ and }d:=\emph{deg}( \mathscr{F})
\end{equation*}
and either $1) \ -C\leq \emph{deg}( \mathscr{F}_{i})-\alpha_{i}\mu_{s}$, or\\
2.a) $h^{0}(X, \mathscr{F}_{i}(k))<\alpha_{i}(P(k)-a\delta)$, if $( \mathscr{F},\phi)\in S^{s}$ and $k\geq N_{3}$\\
2.b) $\alpha^{i}(P-a\delta)\preceq\alpha(P_{ \mathscr{F}^{i}}-a\delta)$ if $( \mathscr{F},\phi)\in\cup_{k\geq N_{1}}S''_{k}$.
 \end{lemma}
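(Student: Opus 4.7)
My proposed proof splits into two independent parts: a uniform upper bound obtained by applying the defining semistability inequality to a one-step refinement, and a dichotomy producing either the lower bound or one of the sectional alternatives 2.a and 2.b.

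\textbf{Part 1 (upper bound).} For each $i$ in the given saturated filtration, I apply the defining inequality of the class to which $(\F,\phi)$ belongs---either the $\delta$-semistability inequality of Definition \ref{semitensor} or the defining inequality of $S''_k$---to the one-step saturated filtration $0\subset\F_i\subset\F$ with weight $m_1=1$. The key preliminary estimate is a lower bound for $\mu(\F_\bullet,\underline{m},\phi)$: with a one-step weighting, $\Gamma=\Gamma^{(\alpha_i)}$ has entries in $\{\alpha_i-\alpha,\alpha_i\}$, so for every multi-index $I\in J$ one has $\Gamma_{\alpha_{i_1}}+\cdots+\Gamma_{\alpha_{i_a}}\geq a(\alpha_i-\alpha)$, and hence $\mu(\F_\bullet,\underline{m},\phi)\geq -a(\alpha-\alpha_i)$. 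Since the leading coefficients in $n$ of $\alpha P_{\F_i}-\alpha_i P$ cancel, only the constant term matters, and the inequality reduces to $\alpha\deg(\F_i)-\alpha_i d\leq a\delta(\alpha-\alpha_i)$. Dividing by $\alpha$ yields $\deg(\F_i)-\alpha_i\mu_s\leq a\delta$, so the choice $C_0:=a\delta$ suffices.

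\textbf{Part 2 (dichotomy).} Fix $C\geq C_0$ and suppose that $\deg(\F_i)-\alpha_i\mu_s<-C$. If $(\F,\phi)\in\bigcup_{k\geq N_1}S''_k$, then in particular $\deg(\F_i)-\alpha_i\mu_s<0$. The identity $\deg(\F^i)-\alpha^i\mu_s=a\delta-(\deg(\F_i)-\alpha_i\mu_s)$, which follows from $\deg(\F^i)=d-\deg(\F_i)$ and $\alpha^i=\alpha-\alpha_i$, gives $\deg(\F^i)-\alpha^i\mu_s>a\delta$. Translating this numerical inequality back in terms of Hilbert polynomials yields exactly $\alpha^i(P-a\delta)<\alpha(P_{\F^i}-a\delta)$, which is 2.b. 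If instead $(\F,\phi)\in S^s$, then Theorem \ref{bound10} implies that the family of such $\F$'s is uniformly bounded, whence $\mu_{\max}(\F_i)\leq\mu_{\max}(\F)\leq C_1$ for a constant $C_1$ depending only on $P,a,\delta,g,h$. A Simpson-type growth estimate then gives $h^0(X,\F_i(k))\leq \alpha_i(k+C_2)$ for a uniform $C_2$. Comparing with $\alpha_i(P(k)-a\delta)=\alpha_i\alpha\, k+\alpha_i\alpha(1-g)/h+\alpha\,\alpha_i\mu_s$, whose leading coefficient in $k$ is $\alpha_i\alpha$---strictly larger than $\alpha_i$ as soon as $\alpha\geq 2$; the degenerate case $\alpha=1$ forces $\alpha_i=\alpha$, hence $\F_i=\F$, so the hypothesis $\deg(\F_i)-\alpha_i\mu_s<-C_0$ is already violated---we obtain a uniform threshold $N_3$ beyond which 2.a holds.

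\textbf{Main obstacle.} The technical core is the uniformity of $C_0$ and $N_3$ along $\overline{\textrm{M}}_g$. This uniformity is imported almost entirely from Theorem \ref{bound10} and Corollary \ref{bound4}, which ultimately rest on the $f$-type bound of Lemma \ref{uniform1}; the conceptual work is a bookkeeping translation of those global bounds into pointwise bounds for the filtration data. The one genuinely delicate input is an explicit Simpson-type growth estimate for $h^0(\F_i(k))$ in terms of $\mu_{\max}(\F_i)$ valid uniformly over every Cohen--Macaulay curve of genus $g$ with polarization of degree $h$, which extends the standard bound on smooth curves.
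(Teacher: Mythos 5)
Your Part 1 and the $S''_k$ branch of Part 2 are essentially correct: applying the defining inequality to the one-step filtration with the bound $\mu(\F_{\bullet},\underline{m},\phi)\geq a(\alpha_i-\alpha)$ does give $\deg(\F_i)-\alpha_i\mu_s\leq a\delta$ uniformly (using $h^0\geq\chi$ in the $S''_k$ case), and the identity $\deg(\F^i)-\alpha^i\mu_s=a\delta-(\deg(\F_i)-\alpha_i\mu_s)$ correctly yields 2.b. The gap is in alternative 2.a. As the lemma is actually used in Corollary \ref{strictsemi} (Equation (\ref{papafrita}) and the chain (\ref{canita2})), alternative 2.a must read $\alpha h^{0}(X,\F_{i}(k))<\alpha_{i}(P(k)-a\delta)$, i.e.\ $h^{0}(X,\F_i(k))<\alpha_i\bigl(k+\mu_s+\tfrac{1-g}{h}\bigr)$; the version without the factor $\alpha$ on the left, which is what you prove, is too weak for the downstream argument. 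For the correct inequality your estimate $h^0(X,\F_i(k))\leq\alpha_i(k+C_2)$, obtained from $\mu_{\max}(\F_i)\leq C_1$ alone, has the \emph{same} leading coefficient $\alpha_i$ in $k$ as the right-hand side, and since $C_1$ exceeds $\mu_s+\tfrac{1-g}{h}$ in general the comparison of constant terms fails. Symptomatically, your argument for 2.a never invokes the hypothesis $\deg(\F_i)-\alpha_i\mu_s<-C$, which is the entire content of the dichotomy.

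The missing ingredient is the refined Le Potier--Simpson estimate (Corollary 1.7 of Simpson's paper, whose constant $B$ appears explicitly in the paper's formula for $C_0$): summing the bound $h^0(\G_j)\leq\alpha_j\,\max\{0,\mu_j+B\}$ over the Harder--Narasimhan factors of $\F_i(k)$ gives, roughly, $h^0(X,\F_i(k))\leq(\alpha_i-1)\max\{0,\mu_{\max}(\F_i)+k+B\}+\max\{0,\mu_{\min}(\F_i)+k+B\}$, and the hypothesis $\deg(\F_i)-\alpha_i\mu_s<-C$ forces $\mu_{\min}(\F_i)\leq\mu(\F_i)<\mu_s-\tfrac{C}{\alpha}+\tfrac{1-g}{h}$, so that for $C\geq C_0$ the last summand is small enough (or vanishes) to push the total strictly below $\alpha_i\bigl(k+\mu_s+\tfrac{1-g}{h}\bigr)$ for every $k\geq N_3$. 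This is precisely why the paper's explicit constant $C_{0}=[\max\{a\delta,\alpha^{2}+B-r(1-g)+d\}]+1$ is strictly larger than your proposed $C_0=a\delta$: the value $a\delta$ only accounts for the upper bound of your Part 1, not for the threshold needed to trigger 2.a. Your diagnosis of the uniformity issue is otherwise on target --- the genuinely non-formal input is that $B$ can be chosen depending only on $g$ and $h$ for all Cohen--Macaulay curves in question, which is what Lemma \ref{uniform1}, Proposition \ref{bound2-3} and Theorem \ref{bound3} supply --- but as written the $S^{s}$ branch of your dichotomy does not establish the inequality that Corollary \ref{strictsemi} relies on.
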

\begin{remark}
The symbol refers $\preceq$ to the lexicographic order of polynomials.
\end{remark}
\begin{proof}
It follows as in \cite[Lemma 2.6]{sols}. A short calculation gives an explicit expression for $N_3$ and $C_0$:
\begin{equation}
\begin{split}
C_{0}=[\textrm{max}\{a\delta,\alpha^{2}+B-r(1-g)+d\}]+1,\ N_{3}=[\textrm{max}\{0,B'+\dfrac{C_0}{\alpha}-\mu_{s}\}]+1,
\end{split}
\end{equation}
where $[- ]$ denotes de integral part, $B$ is the constant given in \cite[Corollary 1.7]{simpson}, which depends only on $P$ and $h$,  and $B':=B+\dfrac{(1-g)}{h}$.
\end{proof}
Lemma \ref{lemma 1,2} gives rise to the following definition.
\begin{enumerate}
\item[5.]$S_{0}$ is the set of saturated subsheaves, $ \mathscr{F}'\subset  \mathscr{F}$, of coherent sheaves, $ \mathscr{F}$, appearing  in swamps $( \mathscr{F},\phi)\in S_{N}$, and satisfying $|\textrm{deg}( \mathscr{F}')-\alpha'\mu_{s}|\leq C$ for all $C\geq C_{0}$.

\end{enumerate}

  \begin{lemma}\label{lemaSNS0}
Let $a,g,h\in\mathbb{N}$ with $g\geq 2$, $P(n)\in\mathbb{Z}[n]$ a polynomial of degree one and $\delta\in\mathbb{Q}_{>0}$. There exists a natural number $N_{4}\in\mathbb{N}$ depending only on $P(n),a,\delta,g,h$ such that for every Cohen-Macaulay projective and connected curve of genus $g$, $X$, with very ample line bundle $\mathscr{O}_{X}(1)$ of degree $h$  the following holds: for every swamp $( \mathscr{F},\phi)\in S_{N}$ on $X$ of type $(a,-,-)$ and for every subsheaf $ \mathscr{F}'\subset  \mathscr{F}$ with $ \mathscr{F}'\in S_{0}$, both, $ \mathscr{F}(k)$ and $ \mathscr{F}'(k)$, are generated by global sections and $h^{1}(X, \mathscr{F}(k))=h^{1}(X, \mathscr{F}'(k))=0$ for every $k\geq N_{4}$.
 \end{lemma}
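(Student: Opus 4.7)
The plan is to bound $\mu_{\max}(\F)$ uniformly over swamps in $S_N$, and then combine this with Theorem \ref{bound3} for $\F$ and Corollary \ref{bound4} for $\F'$, with all constants depending only on $P,a,\delta,g,h$.

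First I would control $\mu_{\max}(\F)$ uniformly. Given $(\F,\phi) \in S_N$, let $\F_1\subset\F$ be the first piece of its Harder--Narasimhan filtration; this is saturated because the quotient $\F/\F_1$ is a pure-dimension-one semistable sheaf, hence torsion-free. Applying Lemma \ref{lemma 1,2} to the one-step saturated weighted filtration $0\subset\F_1\subset\F$ (with $m_1=1$) yields
\begin{equation*}
\deg(\F_1)-\alpha_1\mu_s \leq C_0,
\end{equation*}
hence $\mu'(\F_1)\leq \mu_s+C_0$. Since $\mu$ and $\mu'$ differ by the constant $(1-g)/h$, this provides an explicit constant $C_1$, depending only on $P,a,\delta,g,h$, such that $\mu_{\max}(\F)=\mu(\F_1)\leq C_1$. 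Theorem \ref{bound3} then produces a natural number $N_4^{(1)}$, depending only on $P,C_1,g,h$, such that $\F(k)$ is generated by its global sections and $h^1(X,\F(k))=0$ for every $k\geq N_4^{(1)}$.

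Next I turn to a saturated subsheaf $\F'\in S_0$. Two observations feed into Corollary \ref{bound4}. On the one hand, the defining bound $|\deg(\F')-\alpha'\mu_s|\leq C$ of $S_0$, together with $1\leq \alpha'\leq \alpha$ and the fact that $\mu_s$ depends only on $P,a,\delta,g,h$, forces $\deg(\F')$ to lie in a finite interval, and hence the Hilbert polynomial $P_{\F'}$ to belong to a finite set $\underline{P'}$ depending only on the same data. On the other hand, any subsheaf $\G\subset\F'$ is simultaneously a subsheaf of $\F$, so $\mu(\G)\leq\mu_{\max}(\F)\leq C_1$, giving $\mu_{\max}(\F')\leq C_1$. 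Corollary \ref{bound4} applied to the family $\{\F'\in S_0\}$ (with $\underline{P}:=\underline{P'}$, $C:=C_1$ and $C':=$ the uniform bound on $|\deg(\F')|$) yields a natural number $N_4^{(2)}$, depending only on $P,a,\delta,g,h$, for which $\F'(k)$ is globally generated and $h^1(X,\F'(k))=0$ for every $k\geq N_4^{(2)}$. Setting $N_4:=\max\{N_4^{(1)},N_4^{(2)}\}$ proves the lemma.

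The only delicate point is verifying that the Harder--Narasimhan first piece is legitimately saturated, so that Lemma \ref{lemma 1,2} applies with the correct hypothesis across all members of both $S^s$ and $\bigcup_{k\geq N}S''_k$; beyond that, the proof is a clean assembly of results already established. The crucial feature, as always in this section, is that every constant that enters depends only on $P,a,\delta,g,h$ and not on the particular curve $X$, which is exactly what is required for the relative construction in Section \ref{sectionuniversal}.
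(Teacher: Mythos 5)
Your proof is correct and follows essentially the same route as the paper's: the paper also derives the uniform bound $\mu_{\max}(\F)\leq \mu_{s}+C+\frac{1-g}{h}$ (stated there only as ``easy to show'', which you make explicit via Lemma \ref{lemma 1,2} applied to the saturated one-step filtration coming from the first Harder--Narasimhan piece), observes that the Hilbert polynomials of the subsheaves in $S_{0}$ range over a finite set with coefficients bounded in terms of the input data, and concludes by Theorem \ref{bound3} (your detour through Corollary \ref{bound4} is just that theorem repackaged). One cosmetic remark: $\F/\F_{1}$ need not be semistable when the Harder--Narasimhan filtration has more than two steps, but it is still of pure dimension one, being an iterated extension of the pure quotients $\F_{i}/\F_{i-1}$ for $i\geq 2$, so $\F_{1}$ is indeed saturated and your application of Lemma \ref{lemma 1,2} stands.
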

\begin{proof}
If a swamp $( \mathscr{F},\phi)$ belongs to $S_{N}$, it holds
$$
\mu_{\textrm{max}}( \mathscr{F})\leq \mu_{s}+C +\dfrac{1-g}{h}, \textrm{ where }C\geq C_{0}.
$$
This implies that, for every swamp $( \mathscr{F},\phi)\in S_{N}$ and every subsheaf $ \mathscr{F}'\subset \mathscr{F}$, the inequality $\mu_{\textrm{max}}( \mathscr{F}')\leq \mu_{s}+C +\dfrac{1-g}{h}$ holds. On the other hand, every swamp $( \mathscr{F},\phi)\in S_{N}$ has Hilbert polynomial $P(n)$ and the set of Hilbert polynomials of subsheaves $ \mathscr{F}'\subset \mathscr{F}$ is finite. Moreover, the coefficients of these Hilbert polynomials have lower and upper bounds that depend only on $P(n),a,\delta,g,h$. Therefore, we conclude by applying Theorem \ref{bound3}.
\end{proof}

Then, following \cite[Theorem 2.5]{sols} and applying Corollary \ref{bound5} and Lemma \ref{lemaSNS0} we have:
 \begin{theorem}\label{sectional}
Let $a,g,h\in\mathbb{N}$, with $g\geq 2$, $P(n)\in\mathbb{Z}[n]$ a polynomial of degree one and $\delta\in\mathbb{Q}_{>0}$. There exists a natural number $N_{5}\in\mathbb{N}$ depending only on $P(n),a,\delta,g,h$ such that for every Cohen-Macaulay projective and connected curve of genus $g$, $X$, with very ample line bundle $\mathscr{O}_{X}(1)$ of degree $h$  the following properties of swamps, $( \mathscr{F},\phi)$, of type $(a,-,-)$ with $ \mathscr{F}$ a coherent sheaf of pure dimension one and with Hilbert polynomial $P(n)$, are equivalent:

1) $( \mathscr{F},\phi)$ is $\delta$-(semi)stable.

2) $ \forall \ ( \mathscr{F}_{\bullet},\underline{m})$ we have $\sum_{1}^{t}m_{i}(\alpha h^{0}(X, \mathscr{F}_{i}(k))-\alpha_{i}P(k))+\delta\mu( \mathscr{F}_{\bullet},\underline{m},\phi)(\leq)0.$

3) $ \forall \ ( \mathscr{F}_{\bullet},\underline{m})$ we have $\sum_{1}^{t}m_{i}(\alpha^{i}P(k)-\alpha h^{0}(X, \mathscr{F}^{i}(k)))+\delta\mu( \mathscr{F}_{\bullet},\underline{m},\phi)(\leq)0.$\\
Furthermore, for any swamp satisfying these conditions, we have $h^{1}(X, \mathscr{F}(k))=0$.
 \end{theorem}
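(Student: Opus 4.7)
The plan is to set $N_5 := \max\{N_2, N_3, N_4\}$ (all three depending only on $P(n), a, \delta, g, h$) and establish the equivalences by combining the dichotomy of Lemma \ref{lemma 1,2} with the uniform sectional regularity of Lemma \ref{lemaSNS0}, together with a long exact sequence argument. A useful preliminary observation I would exploit throughout is that $\alpha P_{\F_i}(n) - \alpha_i P_\F(n)$ is independent of $n$ and equal to $\alpha\,\deg(\F_i) - \alpha_i\,\deg(\F)$; hence condition 1) is really a scalar inequality in degrees, directly comparable with 2) and 3) evaluated at $k$.

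I would first prove 1) $\Leftrightarrow$ 3). For 1) $\Rightarrow$ 3), take $(\F, \phi) \in S^s \subset S_N$ and a saturated weighted filtration $(\F_\bullet, \underline{m})$. For each index, Lemma \ref{lemma 1,2} provides a dichotomy. If $\F_i \in S_0$, then Lemma \ref{lemaSNS0} gives $h^1(X, \F_i(k)) = 0$ for $k \geq N_5$; combined with $h^1(X, \F(k)) = 0$ from Theorem \ref{bound10}, the long exact sequence of $0 \to \F_i(k) \to \F(k) \to \F^i(k) \to 0$ forces $h^0(\F^i(k)) = P_{\F^i}(k)$, so that the $i$-th summand of 3) at $k$ coincides with that of 1). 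If $\F_i \notin S_0$, then 2.a) of Lemma \ref{lemma 1,2}, transported through the same exact sequence, yields the strict bound $\alpha^i P(k) - \alpha h^0(\F^i(k)) < -\alpha_i a \delta$, a negative contribution large enough to absorb its share of $\delta\mu(\F_\bullet, \underline{m}, \phi)$. Summation gives 3). The reverse 3) $\Rightarrow$ 1) proceeds identically but invokes 2.b) directly: it is a polynomial inequality on $P_{\F^i}$ and, since $h^0(\F^i(k)) \geq P_{\F^i}(k)$, translates to the analogous scalar bound at $k$.

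The equivalence 2) $\Leftrightarrow$ 3) is cohomological. The long exact sequence gives the identity
\[h^0(X,\F_i(k)) + h^0(X,\F^i(k)) - P(k) = h^1(X,\F_i(k)) + h^1(X,\F^i(k)) \geq 0,\]
whence 2) always implies 3). Conversely, once 3) $\Rightarrow$ 1) is established, Theorem \ref{bound10}, Corollary \ref{bound4}, and Lemma \ref{lemaSNS0} together force both $h^1$'s on the right to vanish for $k \geq N_5$, so that 2) and 3) coincide at $k$. The final assertion $h^1(X, \F(k)) = 0$ is then Theorem \ref{bound10}.

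The principal obstacle is uniformity across all Cohen-Macaulay curves of genus $g$ with polarization of degree $h$: every threshold ($N_2$, $N_3$, $N_4$, the $S_0$-constant $C_0$) must depend only on $P(n), a, \delta, g, h$. This is precisely what the preceding uniform boundedness results (Theorem \ref{bound3}, Corollary \ref{bound4}, Lemma \ref{lemma 1,2}, Lemma \ref{lemaSNS0}) deliver, so the remaining work is the algebraic bookkeeping sketched above.
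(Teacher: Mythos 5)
Your proposal takes the same route as the paper. The paper's own proof is only a two-line sketch: choose $N_5$ dominating all the uniform thresholds established earlier, so that the families $S_N$ and $S_0$ are bounded and $N_5$-regular, and then run the argument of \cite[Theorem 2.5]{sols}; that argument is exactly the dichotomy-plus-long-exact-sequence bookkeeping you describe, and the paper itself carries out the key computation in the proof of Corollary \ref{strictsemi} (your absorption of the $\delta\mu$-contribution by the indices outside $S_0$ is Equation (\ref{canita2}) verbatim).

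Two points need repair. First, the paper's $N_5$ is required to dominate $N_0,\ldots,N_4$ \emph{and}, via Proposition \ref{regtensor}, to make every tensor product $\F_1\otimes\cdots\otimes\F_a$ of members of $S_0$ $a\cdot N_5$-regular; this extra condition is what allows one, in the argument of \cite{sols}, to compare $\mu(\F_\bullet,\underline{m},\phi)$ for a filtration, its saturation, and the filtration by the subsheaves generated by global sections, so it should not be dropped: your $N_5=\max\{N_2,N_3,N_4\}$ is too small. Second, your deduction of 3) $\Rightarrow$ 2) is not correct as stated: the right-hand side of your identity $h^0(\F_i(k))+h^0(\F^i(k))-P(k)=h^1(\F_i(k))+h^1(\F^i(k))$ vanishes only for those indices with $\F_i\in S_0$, so the claim that ``2) and 3) coincide at $k$'' fails for a general weighted filtration. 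The fix is the technique you already use for 1) $\Rightarrow$ 3): split the index set according to the dichotomy of Lemma \ref{lemma 1,2} and use alternative 2.a) to absorb the $\delta\mu$-share of the indices outside $S_0$. With these two adjustments the proposal matches the intended proof.
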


 \begin{corollary}\label{strictsemi}
Let $a,g,h\in\mathbb{N}$, with $g\geq 2$, $P(n)\in\mathbb{Z}[n]$ a polynomial of degree one and $\delta\in\mathbb{Q}_{>0}$. Let $X$ be Cohen-Macaulay projective and connected curve of genus $g$, $X$, with very ample line bundle $\mathscr{O}_{X}(1)$ of degree $h$, $( \mathscr{F},\phi)$ a $\delta$-semistable swamp of type $(a,-,-)$, $k\geq N_{5}$, and assume that there is a weighted filtration $( \mathscr{F}_{\bullet},\underline{m})$ such that
\begin{equation}\label{canita}
(\sum_{i=1}^{t}m_{i}(\alpha h^{0}(X, \mathscr{F}_{i}(k))-\alpha_{i} P(k)))+\delta\mu( \mathscr{F}_{\bullet},\underline{m},\phi)=0.
\end{equation}
Then $ \mathscr{F}_{i}\in S_{0}$ and $h^{0}(X, \mathscr{F}_{i}(k))=P_{ \mathscr{F}_{i}}(k)$ for all $i$.
 \end{corollary}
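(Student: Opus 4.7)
Observe first that once each $\F_i$ is shown to belong to $S_0$, the identity $h^0(X,\F_i(k))=P_{\F_i}(k)$ is automatic: applying Lemma \ref{lemaSNS0} to $(\F,\phi)\in S^s\subseteq S_{N_5}$ and to each $\F_i\in S_0$ gives $h^1(X,\F_i(k))=0$ for all $k\geq N_4$, so since $k\geq N_5\geq N_4$ we obtain $h^0(\F_i(k))=\chi(\F_i(k))=P_{\F_i}(k)$. Hence the whole content of the proof is that $\F_i\in S_0$ for every $i$.

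To apply Lemma \ref{lemma 1,2} I would first pass to the saturation $\tilde\F_\bullet$ of the given filtration. Multiplicities $\alpha_i$ are preserved, so the vector $\Gamma$ underlying $\mu(\F_\bullet,\underline{m},\phi)$ is unchanged, while $h^0(\tilde\F_i(k))\geq h^0(\F_i(k))$ and $\mu(\tilde\F_\bullet,\underline{m},\phi)\leq\mu(\F_\bullet,\underline{m},\phi)$ (the collection of multi-indices $I\in J$ on which $\phi$ is non-vanishing can only enlarge upon saturation). Comparing (\ref{canita}) for $\F_\bullet$ with condition (2) of Theorem \ref{sectional} applied to $\tilde\F_\bullet$ shows that the filtration was already saturated; so from now on we may assume each $\F_i$ is saturated in $\F$. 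Lemma \ref{lemma 1,2} then applies (using $k\geq N_5\geq N_3$ and $(\F,\phi)\in S^s\subseteq S_{N_5}$) and yields the dichotomy: for every $i$, either $\F_i\in S_0$, or case 2.a holds, i.e.\ $h^0(X,\F_i(k))<\alpha_i(P(k)-a\delta)$. Suppose for contradiction that some $\F_{i_0}$ falls only under case 2.a. Condition (2) of Theorem \ref{sectional} applied to each one-step sub-filtration $0\subset\F_i\subset\F$ bounds $\alpha h^0(\F_i(k))-\alpha_i P(k)$ from above for $i\neq i_0$, and the standard combinatorial expansion $\Gamma=\sum_i m_i\Gamma^{(\alpha_i)}$ gives a compatible upper bound for $\mu(\F_\bullet,\underline{m},\phi)$ in terms of the $m_i$ and $\alpha_i$. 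Plugging the strict inequality at $i_0$ together with these bounds into the equality (\ref{canita}) forces a strict $<0$, contradicting $\delta>0$ times the assumed equality. Hence $\F_i\in S_0$ for every $i$, and the corollary follows by the first paragraph.

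The delicate point is the quantitative comparison in the last part of the second paragraph: combining the strict estimate from case 2.a at $i_0$ with the single-step sectional bounds at the remaining indices and the combinatorial upper bound for $\mu(\F_\bullet,\underline{m},\phi)$ must produce a strict violation of (\ref{canita}) sharp enough to contradict the assumed equality. The same kind of bookkeeping is what is required to make the saturation reduction rigorous. Neither step is conceptually difficult, but both rest on careful tracking of the weights $m_i$, the multiplicities $\alpha_i$, and the minimum over multi-indices defining the Schmitt $\mu$-function.
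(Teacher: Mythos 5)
Your overall architecture is the right one and matches the paper's: establish the dichotomy of Lemma \ref{lemma 1,2} for each $\F_i$, rule out alternative 2.a using the hypothesised equality together with the sectional semistability of Theorem \ref{sectional}, and then get $h^{0}(X,\F_i(k))=P_{\F_i}(k)$ from Lemma \ref{lemaSNS0} (your first paragraph is exactly the paper's last line). The gap is in the mechanism you propose for ruling out 2.a. You want to bound each term $\alpha h^{0}(\F_i(k))-\alpha_i P(k)$, $i\neq i_0$, by the one-step sectional inequality and then recombine; this requires an estimate of the shape
$$\mu(\F_{\bullet},\underline{m},\phi)\;\leq\;\sum_{i\neq i_0}m_{i}\,\mu(\phi,\F_{i}\subset\F)\;+\;m_{i_0}a\alpha_{i_0},$$
and that inequality is false in general. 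Writing $\mu(\F_{\bullet},\underline{m},\phi)=\sum_i m_i a\alpha_i-\alpha\max_{I}\sum_i m_i\nu_i(I)$ with $\nu_i(I)=\#\{j:i_j\leq i\}$ and the maximum over admissible multi-indices, the required bound amounts to finding a \emph{single} admissible $I$ realising (a weighted version of) all the individual one-step maxima $\max_I\nu_i(I)$ simultaneously; since these maxima are attained at different multi-indices, the minimum defining $\mu$ does not decompose over the steps of the filtration (this is precisely why swamp semistability cannot be tested on one-step filtrations alone), and one can build three-step examples with $a=2$ where your inequality fails. The crude substitute $\mu(\F_{\bullet},\underline{m},\phi)\leq\sum_i m_i a\alpha_i$ is too weak to close the argument.

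The paper avoids this by never breaking the filtration into one-step pieces. It sets $T'=\{i:\F_i\in S_0\}$, passes to the corresponding subfiltration $(\F'_{\bullet},\underline{m}')$, and uses the valid comparison $\mu(\F_{\bullet},\underline{m},\phi)\leq\mu(\phi,\F'_{\bullet},\underline{m}')+\sum_{i\in T\setminus T'}m_{i}a\alpha_{i}$ (restrict the minimum to multi-indices supported on $T'\cup\{t+1\}$ and bound the contributions of the dropped steps by $a\alpha_i$). The sectional semistability of Theorem \ref{sectional}(2) is then applied to the subfiltration \emph{as a whole}, while for each $i\in T\setminus T'$ alternative 2.a gives the strict bound $\alpha h^{0}(X,\F_{i}(k))-\alpha_{i}P(k)+a\alpha_{i}\delta<0$, which exactly absorbs the error term $m_i a\alpha_i\delta$ with strict surplus. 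The equality (\ref{canita}) then forces $T\setminus T'=\emptyset$. Your saturation reduction has a similar problem: under saturation $h^{0}(\F_i(k))$ can only increase while $\mu$ can only decrease, so the two sides of (\ref{canita}) move in opposite directions and the comparison you describe does not show the filtration was already saturated.
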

\begin{proof}
Let $k\geq N_5$. If $ \mathscr{F}_{i}\in S_{0}$, then $P_{ \mathscr{F}_{i}}(k)=h^{0}(X, \mathscr{F}_{i}(k))$ by Lemma \ref{lemaSNS0}. If $ \mathscr{F}_{i}$ do not belongs to $S_{0}$, then the second alternative of Lemma \ref{lemma 1,2} holds, so
\begin{equation}\label{papafrita}
\alpha h^{0}(X, \mathscr{F}_{i}(k))<\alpha_{i}(P(k)-a\delta).
\end{equation}
Let $T'\subset T=\{1,\hdots,t\}$ be the subset of those $i$ for which $ \mathscr{F}_{i}\in S_{0}$. Let $( \mathscr{F}'_{\bullet},\underline{m}')$ be the corresponding subfiltration. Then
\begin{equation}\label{canita2}.   
\begin{split}
&(\sum_{i=1}^{t}m_{i}(\alpha h^{0}(X, \mathscr{F}_{i}(k))-\alpha_{i} P(k)))+\delta \mu( \mathscr{F}_{\bullet},\underline{m},\phi)\leq \\
\leq&(\sum_{i=1}^{t}m_{i}(\alpha h^{0}(X, \mathscr{F}_{i}(k))-\alpha_{i} P(k)))+\delta  \mu( \mathscr{F}'_{\bullet},\underline{m}',\phi)+ \delta(\sum_{i\in T\setminus T'}m_{i}a\alpha_{i})=\\
=&(\sum_{i\in T'}m_{i}(\alpha h^{0}(X, \mathscr{F}_{i}(k))-\alpha_{i} P(k)))+\delta  \mu( \mathscr{F}'_{\bullet},\underline{m}',\phi)+ \\ +&(\sum_{i\in T\setminus T'}m_{i}(\alpha h^{0}(X, \mathscr{F}_{i}(k))-\alpha_{i} P(k))+a\alpha_{i}\delta))\leq \\
\leq&(\sum_{i\in T'}m_{i}(\alpha P_{ \mathscr{F}_{i}}(k)-\alpha_{i}P(k)))+\delta  \mu( \mathscr{F}'_{\bullet},\underline{m}',\phi) (\leq)0.
\end{split}
\end{equation}
Equation (\ref{canita}) implies that the inequalities in (\ref{canita2}) become equalities, so $T\setminus T'=\emptyset$ which implies $ \mathscr{F}_{i}\in S_{0}$ for all $i$.
\end{proof}

\subsubsection{Step 2: The Hilbert-Mumford criterion}

Let $\Phi\colon (F^{\otimes a})^{\oplus b}\rightarrow H^{0}(X, \mathscr{D}(ak))^{\vee}$ be a point of $\mathbf{P}(Z_{1})$ and let $(F_{\bullet},\underline{m})$ be a weighted filtration of $F$. For each $ F_{i}$ denote by $n_{i}$ its dimension  and by $n$ the dimension of $F$. Define the vector 
\begin{equation*}
\Gamma=\sum_{1}^{t}m_{i}  \Gamma^{(n_{i})},
\end{equation*}
where $\Gamma^{(l)}=(\overset{l}{\overbrace{l-n,\hdots,l-n}},\overset{n-l}{\overbrace{l,\hdots,l}})$. Let us denote by $J$ the set
\begin{equation*}
J=\{\textrm{ multi-indices }I=(i_{1},\hdots,i_{a})|I_{j}\in\{1,\hdots,t+1\}\}.
\end{equation*}
Define
$\mu(F_{\bullet},\underline{m},\Phi)= \textrm{min}_{I\in J}\{  \Gamma_{n_{i_{1}}}+\hdots +  \Gamma_{n_{i_{a}}}|\Phi|_{(F_{i_{1}}\otimes \hdots\otimes  F_{i_{a}})^{\oplus b}}\neq 0\}$. Denote by $\epsilon_{i}(F_{\bullet})$ the number
$\#\{k\in (i_{1},\hdots,i_{a})| \ n_{k}\leqslant n_{i}\}$,
$(i_{1},\hdots,i_{a})$ being a multi-index giving the minimum in $\mu(V_{\bullet},\underline{m},\Phi)$.
Then, the following holds,
\begin{equation*}
\mu(V_{\bullet},\underline{m},\Phi)=\sum_{i=1}^{t} m_{i}( \alpha_{i}a-\epsilon_{i}(F_{\bullet})\alpha).
\end{equation*}

 \begin{proposition}\label{propl}
Let $g,h,D,a,b, l_0\in\mathbb{N}$,  and $P(n)\in\mathbb{Z}[n]$ a polynomial of degree one. There is a natural number $L_0\in\mathbb{N}$ depending only on the numerical input data such that for every $l\geq L_0$ the following holds: for every Cohen-Macaulay projective and connected curve of genus $g$ with a very ample line bundle $\mathscr{O}_{X}(1)$ of degree $h$, such that $h^{1}(X,\mathscr{O}_{X}(l_0))=0$, the point $(q,\Phi)\in Z_{k,\mathscr{D}}(X)$ is $\emph{GIT}$-(semi)stable with respect to $\mathscr{O}_{Z_{k,\mathscr{D}}(X)}(n_{1},n_{2})$ if and only if for every weighted filtration $(F_{\bullet},\underline{m})$ of $F:=\mathbb{C}^{P(k)}$,
\begin{equation}\label{eqimp}
n_{1}(\sum_{1}^{t}m_{i}(\emph{dim}(F_{i})P(l)-\emph{dim}(F) P_{ \mathscr{F}_{F_{i}}}(l))) + n_{2}\delta\mu(F_{\bullet},\underline{m},\Phi)(\leq)0.
\end{equation}
Furthermore, there is an integer $A_{2}$ depending only on the numerical input data such that it is enough to consider weighted filtrations with $m_{i}\leq A_{2}$.
 \end{proposition}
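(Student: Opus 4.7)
The plan is to apply the Hilbert--Mumford numerical criterion to the point $(q,\phi)\in Z_{k,\mathscr{D}}(X)$ with respect to the linearization $\mathscr{O}_{Z_{k,\mathscr{D}}(X)}(n_{1},n_{2})$ and to match the resulting weight with the left hand side of (\ref{eqimp}). A one-parameter subgroup of $\textrm{SL}(V)=\textrm{SL}(F)$ is encoded by a weighted filtration $(F_{\bullet},\underline{m})$ of $F=\mathbb{C}^{P(k)}$; after clearing denominators we may assume the weights $m_{i}$ are positive integers.

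First I would compute the contribution of the Quot factor $\mathcal{H}$. Under the Grothendieck--Pl\"ucker embedding
\begin{equation*}
\mathcal{H}\longhookrightarrow\mathbf{P}\bigl(\bigwedge^{P(l)}(F\otimes W)\bigr),\qquad W=H^{0}(X,\mathscr{O}_{X}(l-k)),
\end{equation*}
Mumford's standard weight calculation for Pl\"ucker coordinates shows that, up to a positive normalization, the weight of $q$ at the one-parameter subgroup determined by $(F_{\bullet},\underline{m})$ equals
\begin{equation*}
\sum_{i=1}^{t}m_{i}\bigl(\dim(F_{i})\,P(l)-\dim(F)\,h^{0}(q(l))(F_{i}\otimes W)\bigr).
\end{equation*}
To rewrite $h^{0}(q(l))(F_{i}\otimes W)$ as $P_{\mathscr{F}_{F_{i}}}(l)$ I need both the surjectivity of the evaluation on $F_{i}\otimes W$ and the vanishing $h^{1}(X,\mathscr{F}_{F_{i}}(l))=0$. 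Applying Lemma \ref{uniform1} to the structure sheaf $\mathscr{O}_{X}$ produces an integer $l_{0}$ depending only on $g$ and $h$ with $h^{1}(X,\mathscr{O}_{X}(l_{0}))=0$, and Lemma \ref{gitsemistability} then furnishes a uniform threshold $L_{0}'$, depending only on the numerical input data, such that both conditions hold for every $l\geq L_{0}'$, every quotient $q$ in $\mathcal{H}$ and every subspace $F_{i}\subset F$.

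Next I would compute the weight on the $\mathbf{P}(Z_{1})$ factor. By the very definition of $\mu(\phi,F_{\bullet},\underline{m})$ as the minimum over multi-indices of the sums $\Gamma_{\alpha_{i_{1}}}+\cdots+\Gamma_{\alpha_{i_{a}}}$ evaluated on the non-vanishing components of $\phi$, and by the construction of the embedding of $Z_{k,\mathscr{D}}(X)$ into $\mathbf{P}(Z_{1})$ via the components of $\phi$ on the weight spaces of $(V^{\otimes a})^{\oplus b}$, the weight of the point representing $\phi$ is exactly $\mu(\phi,F_{\bullet},\underline{m})$ up to the same positive normalization. Combining the two contributions with coefficients $n_{1}$ and $n_{2}$ and using the choice of polarization (\ref{polarization}) reproduces the left hand side of (\ref{eqimp}); the Hilbert--Mumford criterion then yields the claimed equivalence with GIT (semi)stability.

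For the uniform bound $A_{2}$ I would follow the strategy of Lemma \ref{finiteset}: expression (\ref{eqimp}) is linear in each $m_{i}$, so verifying it reduces to testing it at the vertices of a rational polytope whose shape depends only on $\dim(F_{i})\in[1,\dim(F)-1]$ and on the possible multiplicities of the $\mathscr{F}_{F_{i}}$. Boundedness of these multiplicities, together with the uniform bound on $\mu_{\max}(\mathscr{F}_{F_{i}})$ provided by Proposition \ref{bound2-3}, yields $A_{2}$ as a function of the numerical input data only. The main obstacle I anticipate is precisely the uniformity of $L_{0}$ across \emph{every} Cohen--Macaulay curve of genus $g$: the entire content of the proposition is that a single threshold $L_{0}$ works in the relative setting, and this is exactly the payoff of Lemmas \ref{uniform1} and \ref{gitsemistability}, combined with Proposition \ref{bound2-3} and Theorem \ref{bound3}.
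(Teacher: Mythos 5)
Your proposal is correct and follows essentially the same route as the paper: the paper's proof simply cites the Hilbert--Mumford weight computation of \cite[Proposition 3.4]{sols} together with Lemma \ref{gitsemistability} for the uniform threshold, and the argument of Lemma \ref{finiteset} for the bound $A_{2}$, which is exactly what you have unpacked. Your explicit sourcing of the uniform $l_{0}$ from Lemma \ref{uniform1} and of the kernel regularity from Proposition \ref{bound2-3} and Theorem \ref{bound3} matches how the paper's Lemma \ref{gitsemistability} is itself proved.
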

\begin{proof}
It follows as in \cite[Proposition 3.4]{sols} and applying Corollary \ref{gitsemistability}. The last part follows by applying the same argument given in Remark \ref{finiteset}.
\end{proof}

 \begin{proposition}\label{colmena}
Let $g,h,D,a,b,l_0\in\mathbb{N}$, and  $P(n)\in\mathbb{Z}[n]$ a polynomial of degree one. There is a natural number $L_1\in\mathbb{N}$ depending only on the numerical input data such that for every $l\geq L_1$ the following holds: for every Cohen-Macaulay projective and connected curve of genus $g$ with a very ample line bundle $\mathscr{O}_{X}(1)$ of degree $h$, such that $h^{1}(X,\mathscr{O}_{X}(l_0))=0$,
a point $(q,\Phi)\in Z_{k,\mathscr{D}}(X)$ is \emph{GIT}-(semi)stable with respect to $\mathscr{O}_{Z_{k,\mathscr{D}}(X)}(n_{1},n_{2})$ if and only if for all weighted filtrations $( \mathscr{F}_{\bullet},\underline{m})$ of $ \mathscr{F}$,
\begin{equation*}\label{semistable2}
\sum_{1}^{t}m_{i}( (\emph{dim} (F_{ \mathscr{F}_{i}})-\epsilon_{i}(F_{\bullet})\delta)(P-a\delta)-(P_{ \mathscr{F}_{F_{i}}}-\epsilon_{i}(F_{\bullet})\delta)(\emph{dim} F-a\delta) )(\preceq)0.
\end{equation*}
where $F:=\mathbb{C}^{P(k)}$. 
 \end{proposition}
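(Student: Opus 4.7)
The plan is to deduce the stated equivalence from Proposition \ref{propl} by substituting the specific polarization ratio and converting the numerical inequality at fixed $l$ into a polynomial inequality in the Hilbert polynomial variable. First I would invoke Proposition \ref{propl}: for $l \geq L_0$, GIT-(semi)stability of $(q, \phi)$ is characterized by
$$n_1 \sum_i m_i\big(\dim(F_i) P(l) - \dim(F) P_{\F_{F_i}}(l)\big) + n_2 \delta \mu(\phi, F_\bullet, \underline{m})(\leq) 0$$
for every weighted filtration of $F := \mathbb{C}^{P(k)}$ with integer weights $m_i \leq A_2$. Substituting the ratio $n_1/n_2 = \delta(P(l) - \dim V)/(\dim V - a\delta)$ from Equation (\ref{polarization}), clearing denominators, and expanding the weight $\mu(\phi, F_\bullet, \underline{m})$ in terms of the auxiliary coefficients $\epsilon_i(F_\bullet)$ in the standard way, routine algebra produces the announced polynomial inequality
$$\sum_i m_i\Big((\dim F \cdot P_{\F_i} - \epsilon_i(F_\bullet)\delta)(P - a\delta) - (P_{\F_{F_i}} - \epsilon_i(F_\bullet)\delta)(\dim F - a\delta)\Big)(\preceq) 0.$$

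The only non-formal step is to legitimize the replacement of $h^0(X, \F_{F_i}(l))$, which is what literally enters the GIT computation through the Plücker coordinates, by the polynomial value $P_{\F_{F_i}}(l)$: one needs both $h^1(X, \F_{F_i}(l)) = 0$ and the surjectivity of $F_i \otimes H^0(X, \mathscr{O}_X(l-k)) \twoheadrightarrow H^0(X, \F_{F_i}(l))$. Both are supplied by Lemma \ref{gitsemistability} once $l$ exceeds a threshold depending only on $l_0 - k$ and the uniform regularity bounds given by Theorem \ref{bound3}, Corollary \ref{bound4} and Proposition \ref{bound2-3} applied to the saturated subsheaves $\F_{F_i}$. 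Because these thresholds are controlled solely by the numerical input data and by the hypothesis $h^1(X, \mathscr{O}_X(l_0)) = 0$, a single constant $L_1$ works for every admissible Cohen-Macaulay curve $X$.

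For the last assertion, I would suppose $F' := \ker(f_q) \neq 0$. Since $f_q$ is the global-sections map of the quotient $F \otimes \mathscr{O}_X \to \F(k)$, the morphism $F' \otimes \mathscr{O}_X \to \F(k)$ is determined by the zero map on $H^0$ and is hence itself zero, so $\F_{F'} = 0$. Applied to the one-step filtration $0 \subset F' \subset F$ with weight $m = 1$, every multi-index involving the first step produces a vanishing restriction of $\phi$, so the only surviving multi-index is $(2, \ldots, 2)$, yielding $\mu(\phi, F_\bullet, 1) = a \dim(F')$. The criterion of Proposition \ref{propl} then reduces to
$$\dim(F')\big(n_1 P(l) + n_2 a \delta\big) \leq 0,$$
which is impossible since all four factors are strictly positive, forcing $F' = 0$. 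The main obstacle in the whole argument is the bookkeeping needed to make the uniformity of $L_1$ explicit: one must verify that all the regularity thresholds entering the argument depend only on the numerical data $g, h, D, a, b, P, \delta, k, l_0$ and not on the particular curve $X$, which rests on the uniform boundedness results of Section \ref{sectionuniform} combined with Lemma \ref{gitsemistability}.
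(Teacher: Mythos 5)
Your proposal is correct and follows essentially the same route as the paper: substitute the polarization ratio of Equation (\ref{polarization}) into the criterion of Proposition \ref{propl}, use Lemma \ref{gitsemistability} to identify $h^{0}(X,\F_{F_i}(l))$ with $P_{\F_{F_i}}(l)$, and observe that validity for all $l\geq L_0$ upgrades the numerical inequality to a polynomial one. The paper's own proof is terser (it defers the remaining algebra and the injectivity of $f_q$ to the cited reference), but your injectivity argument via the one-step filtration $0\subset F'\subset F$ with $\F_{F'}=0$ and $\mu(\phi,F'\subseteq F)=a\dim(F')$ is exactly the argument the paper itself uses in the proof of Theorem \ref{teoremon}.
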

\begin{proof}
Using the polarization given in (\ref{polarization}), the inequality of Proposition \ref{propl} becomes
\begin{equation*}
\sum_{i=1}^{t}m_{i}((\textrm{dim}(F_{i})- \epsilon(F_{\bullet})\delta)(P(l)-a\delta)- (P_{\mathscr{F}_{F_{i}}}(l)-\epsilon_{i}(F_{\bullet})\delta)(\textrm{dim}(F)-a\delta))(\leq)0.
\end{equation*}
By Proposition \ref{propl} there is an $A_{2}$ such that we just need to choose $m_{i}<A_{2}$. By Corollary \ref{gitsemistability}, there exists $L'_1\in\mathbb{N}$ depending only on the numerical input data such that $h^{1}(X,\mathscr{F}_{F'}(l))=0$ for every $l\geq L'_1$. Altogether implies that there are finitely many possible Hilbert polynomials for the sheaves of the form $\mathscr{F}_{F'}$, and their coefficients are bounded by constants that do not depend on the base curve we are considering. Therefore, there is $L_1\in\mathbb{N}$, depending only on the numerical input data, such that if $l\geq L_1$, the above inequality holds if and only if it holds as inequality of polynomials. Now, the proposition follows as in \cite[Proposition 3.5]{sols}.
\end{proof}

\subsubsection{Step 3: Proof of Theorem \ref{teoremon}}
We chose natural numbers $k\geq N:=N_5$ and $l\geq L:=L_1$.

1) We will see that if $(q,\Phi)$ is GIT-(semi)stable, then the corresponding swamp, $( \mathscr{F},\phi)$, is $\delta$-(semis)stable and $q$ induces the isomorphism. From Equation (\ref{eqimp}) and the polarization defined in Equation (\ref{polarization}), we deduce that
\begin{equation*}
\sum_{1}^{t}m_{i}( (\textrm{dim} (F_{ \mathscr{F}_{i}})-\epsilon_{i}(F_{\bullet})\delta)\alpha-\alpha_{i}(\textrm{dim} (F)-a\delta) )\leq 0,
\end{equation*}
or, equivalently
\begin{equation}\label{mosquito}
\sum_{i=1}^{t}m_{i}(\textrm{dim}(F_{ \mathscr{F}_{i}})\alpha - \alpha_{i}\textrm{dim}(F))+\delta\mu( \mathscr{F}_{\bullet},\underline{m},\phi)\leq 0.
\end{equation}
Since $\textrm{dim}(F)=P(k)$ and $P(k)\leq h^{0}(X, \mathscr{F}_{i}(k))+h^{0}(X, \mathscr{F}^{i}(k))$, inequality (\ref{mosquito}) becomes
\begin{equation}\label{e}
(\sum_{i=1}^{t}m_{i}(\alpha^{i}P(k)-\alpha h^{0}(X, \mathscr{F}^{i}(k))))+\delta\mu( \mathscr{F}_{\bullet},\underline{m},\phi)\leq 0.
\end{equation}
Applying \cite[Lemma 2.3]{L} and \cite[Lemma 1.17]{simpson}, and by a similar argument as given in \cite[Theorem 3.5]{sols}, we deduce that $ \mathscr{F}$ has pure dimension one. Now, by Theorem \ref{sectional}, $( \mathscr{F},\phi)$ is $\delta$-semistable. Finally, the quotient $q$ induces a linear map
$f_{q}\colon F\rightarrow H^{0}(X, \mathscr{F}(k))$.
Let $F'\subseteq F$ be its kernel. Clearly, $ \mathscr{F}_{F'}=0$ and $\mu(F'\subseteq F,1,\Phi)=a \textrm{dim}(F')$.  Proposition \ref{propl} gives us
$n_{1}\textrm{dim}(F')P(l)+n_{2} \textrm{dim}(F')\leq 0$.
Hence $F'=0$, so $f_{q}$ is injective. Since $( \mathscr{F},\phi)$ is $\delta$-semistable, $\textrm{dim}(F)=h^{0}(X, \mathscr{F}(k))$ so $f_{q}$ is, in fact, an isomorphism.

2) For the converse of the statement, assume $( \mathscr{F},\phi)$ is $\delta$-(semi)stable and that $q$ induces an isomorphism $f_{q}:F\simeq H^{0}(X, \mathscr{F}(k))$. Since $f_{q}$ is an isomorphism, $F_{ \mathscr{F}'}=H^{0}(X, \mathscr{F}'(k))$ for any subsheaf $ \mathscr{F}'\subset \mathscr{F}$. Thus, by Theorem \ref{sectional} we have
\begin{equation}\label{leadingcoef}
\sum_{i=1}^{t}m_{i}(\alpha \textrm{dim} F_{ \mathscr{F}_{i}}-\alpha_{i}P(k))+\delta\mu( \mathscr{F}_{\bullet} ,\underline{m},\phi)(\leq) 0
\end{equation}
for all weighted filtrations. Observe that the left-hand side of Equation (\ref{leadingcoef}) is precisely the leading coefficient of the polynomial
\begin{equation*}
\sum_{i=1}^{t}m_{i}((\textrm{dim} F_{ \mathscr{F}_{i}}-\epsilon_{i}( \mathscr{F}_{\bullet})\delta )(P-a\delta)-(P_{ \mathscr{F}_{i}}-\epsilon_{i}( \mathscr{F}_{\bullet})\delta)(\textrm{dim} F-a\delta)).
\end{equation*}
We deduce that if we have a strict inequality in Equation (\ref{leadingcoef}) then,
\begin{equation*}
\sum_{i=1}^{t}m_{i}((\textrm{dim} F_{ \mathscr{F}_{i}}-\epsilon_{i}( \mathscr{F}_{\bullet})\delta )(P-s\delta)-(P_{ \mathscr{F}_{i}}-\epsilon_{i}( \mathscr{F}_{\bullet})\delta)(\textrm{dim} F-s\delta))\prec 0.
\end{equation*}
If $( \mathscr{F},\phi)$ is strictly $\delta$-semistable, by Theorem \ref{sectional} there is a filtration $( \mathscr{F}_{\bullet},\underline{m})$ giving an equality in (\ref{leadingcoef})
\begin{equation}\label{eqq1}
\sum_{i=1}^{t}m_{i}(\alpha \textrm{dim}(F_{ \mathscr{F}_{i}})-\alpha_{i} P(k))+\delta\mu( \mathscr{F}_{\bullet} ,\underline{m},\phi)= 0.
\end{equation}
Note that
\begin{align*}
\sum_{i=1}^{t}m_{i}&\left\{(\textrm{dim}(F_{ \mathscr{F}_{i}})-\epsilon_{i}\delta)(P-a\delta)-(P_{ \mathscr{F}_{i}}-\epsilon_{i}\delta)(\textrm{dim}(F)-a\delta)\right\} =\\
=\sum_{i=1}^{t}m_{i}&\left\{(\textrm{dim}(F_{ \mathscr{F}_{i}})P-\textrm{dim}(F)P_{ \mathscr{F}_{i}}) +\delta(P_{ \mathscr{F}_{i}}a-\epsilon_{i}P)\right. -\\
&\left.-\delta(\textrm{dim}(F_{ \mathscr{F}_{i}})a-\epsilon_{i}\textrm{dim}(F))  \right\}.
\end{align*}
The degree one coefficient of this polynomial is given by
\begin{align*}
&\sum_{i=1}^{t}m_{i}(  (\textrm{dim}(F_{ \mathscr{F}_{i}})\alpha-\textrm{dim}(F)\alpha_{i}) +\delta(\alpha_{i}a-\epsilon_{i}\alpha)  )=\\
=&\sum_{i=1}^{t}m_{i}(\alpha \textrm{dim}(F_{ \mathscr{F}_{i}})-\alpha_{i} P(k))+\delta\mu( \mathscr{F}_{\bullet} ,\underline{m},\phi),
\end{align*}
which is equal to $0$ because of Equation (\ref{eqq1}). Using the equalities  $P(n)=(\textrm{dim}(F)-\alpha k)+\alpha n$, $P_{ \mathscr{F}_{i}}(n)=(\textrm{dim}(F_{ \mathscr{F}_{i}})-\alpha_{i}k)+\alpha_{i}n$ (this last equality follows from Corollary \ref{strictsemi} ) and again Equation (\ref{eqq1}), it follows that the constant coefficient of this polynomial is also $0$. This implies that if $( \mathscr{F},\phi)$ is $\delta$-(semi)stable and $f_{1}$ is an isomorphism then
\begin{equation*}
\sum_{i=1}^{t}m_{i}((\textrm{dim} F_{ \mathscr{F}_{i}}-\epsilon_{i}( \mathscr{F}_{\bullet})\delta )(P-s\delta)-(P_{ \mathscr{F}_{i}}-\epsilon_{i}( \mathscr{F}_{\bullet})\delta)(\textrm{dim} F-s\delta))(\preceq) 0.
\end{equation*}
Now, the result follows from Proposition \ref{colmena}.

\section{The universal moduli space of swamps}
\label{sectionuniversal}

Our goal is to prove the existence of a coarse projective moduli space, $\mathcal{T}^{\delta\textrm{-(s)s}}_{P,g,a,b}$, for the moduli functor
\begin{equation*}
\textbf{Swamps}^{\delta\textrm{-(s)s}}_{P,g,a,b}(T)=\left\{
\begin{array}{l}
\textrm{isomorphism classes of pairs }(X_{T},( \mathscr{F}_{T},\phi_{T},\mathscr{N}))\\
\textrm{where }X_{T} \textrm{ is a semistable curve of genus }g \\
\textrm{over }T \textrm{ and } ( \mathscr{F}_{T},\phi_{T},\mathscr{N}) \textrm{ is a } \delta\textrm{-(semi)stable}\\
\textrm{swamp of uniform multi-rank }r \textrm{over } T\\
\textrm{ and with Hilbert polynomial } P
\end{array}\right\}.
\end{equation*}
satisfying that there is a natural map $\Theta_{sw}:\mathcal{T}^{\delta\textrm{-(s)s}}_{P,g,a,b}\rightarrow\overline{\textrm{M}}_{g}$ such that for any stable curve $[X]\in\overline{\textrm{M}}_{g}$,  $\Theta_{sw}^{-1}([X])=\mathcal{T}^{\delta\textrm{-(s)}s}_{P,X,a,b}/\textrm{Aut}(X)$.

\subsection{Gieseker construction of $\overline{\textrm{M}}_{g}$}
We summarize the construction of $\overline{\textrm{M}}_{g}$ \cite{gies-curves}.

Fix integers $g\geq 2$, $d=10(2g-2)$ and $M=d-g$. Consider the Hilbert scheme $\textrm{H}_{d,g,M}$ representing projective curves of genus $g$ and degree $d$ in  $\mathbf{P}^{M}$.
There exists a projective embedding,
$i'_{s}:\textrm{H}_{d,g,M}\hookrightarrow \mathbf{Grass}(h(s),H^{0}(\mathbf{P}^{M},\mathcal{O}_{\mathbf{P}^{M}}(s))^{\vee})$,
for each $s\in\mathbb{N}$ greater than certain $s_{s}\in\mathbb{N}$. 
Given a stable curve $X$ of genus $g$, $\mathscr{\omega}_{X}^{\otimes 10}$ is a very ample line bundle with $\textrm{dim}(H^{0}(X,\mathscr{\omega}_{X}^{\otimes 10}))=M+1$. Thus, once an isomorphism $\mathbb{C}^{M+1}\simeq H^{0}(X,\mathscr{\omega}_{X}^{\otimes 10})$ is fixed, $\mathscr{\omega}_{X}^{\otimes 10}$ embeds $X$ in the projective space $\mathbf{P}^{M}$, the image being a projective curve of genus $g$ and degree $d$. Therefore, 
$[X]\in\textrm{H}_{d,g,M}$.

Let us denote by   $\textrm{H}_{g}\subset \textrm{H}_{d,g,M}$ the locus of non-degenerate, 10-canonical stable curves of genus $g$. 
The action of $\textrm{SL}_{M+1}$ on $\mathbf{P}^{M}$ induces an action on $\textrm{H}_{g}$. Gieseker shows that there exist a natural number $s_{1}$ such that for every $s\geq s_1$ the GIT linearised problem $i'_{s}$ satisfies
(1) $\textrm{H}_{g}$ belongs to the semistable locus,
(2) $\textrm{H}_{g}$ is closed in the semistable locus,
from what follows that
$
\overline{\textrm{M}}_{g}=\textrm{H}_{g}/\textrm{SL}_{M+1}
$
exists and is projective.
The scheme $\textrm{H}_{g}$ is endowed with a universal family 
\begin{equation}\label{uno}
\xymatrix{
U_{g}\ar@{^(->}[rr]^{\small{closed}}_{\psi}\ar[rrd]^{\mu}_{flat} & & \textrm{H}_{g}\times\mathbf{P}^{M} \ar[r]^{pr_{2}}\ar[d]^{pr_{1}} & \mathbf{P}^{M} \\
&& \textrm{H}_{g} &
}
\end{equation}
called the universal curve of genus $g$. We will denote by $\nu:U_{g}\rightarrow\mathbf{P}^{M}$ the second projection. 

The projective scheme $\textrm{H}_{g}$ has the following important feature. For any closed point $h\in\textrm{H}_{g}$, $\psi$ induces a closed immersion $\psi_{h}:X_{h}\hookrightarrow \mathbf{P}^{M}$, $X_{h}$ being the fiber of $\mu$ over $h\in\textrm{H}_{g}$, which satisfies that $\psi_{h}^{*}\mathscr{O}_{\mathbf{P}^{M}}(1)\simeq \mathscr{\omega}_{X_{h}}^{\otimes 10}$ (see \cite[Proposition 2.0.0]{gies-curves}). Therefore, $\nu^{*}\mathscr{O}_{\mathbf{P}^{M}}(1)|_{X_{h}}\simeq \mathscr{\omega}_{X_{h}}^{\otimes 10}$ for every $h\in\textrm{H}_{g}$.

\subsection{Grothendieck embedding of the relative Quot scheme}\label{embedquot}

Consider the relatively very ample line bundle $\nu^{*}\mathscr{O}_{\mathbf{P}^{M}}(1)=:\mathscr{O}_{U_g}(1)$ on the universal curve. Define
$\mathbf{Q}^{r}_{g}(\mu,k,P):=\textrm{Quot}^{P,r}_{\mathbb{C}^{n}\otimes\mathscr{O}_{U_{g}}(-k)/U_{g}/ \textrm{H}_{g}}\subset\textrm{Quot}^{P}_{\mathbb{C}^{n}\otimes\mathscr{O}_{U_{g}}(-k)/U_{g}/ \textrm{H}_{g}}$,
where $k\in\mathbb{N}$ and $n=P(k)$. This is the (open and closed) subscheme of quotients with uniform multi-rank $r$.   
By construction, there is a canonical projective morphism
\begin{equation}\label{dos}
\mathbf{Q}^{r}_{g}(\mu,k,P)\overset{\pi}{\rightarrow}\textrm{H}_{g}
\end{equation}
and the fibered product
$$
\xymatrix{
\mathbf{Q}^{r}_{g}(\mu,k,P)\times_{\textrm{H}_{g}}U_{g}\ar[r]^{\theta}\ar[d]^{\phi} & \mathbf{Q}^{r}_{g}(\mu,k,P)\ar[d]^{\pi} \\
U_{g}\ar[r]_{\mu} & \textrm{H}_{g}
}
$$
is equipped with a universal  quotient
\begin{equation}\label{tres}
q_{U_g}:\mathbb{C}^{n}\otimes\phi^{*}\mathscr{O}_{U_{g}}(-k)\twoheadrightarrow \mathscr{E}\rightarrow 0
\end{equation}
flat over $\mathbf{Q}_{g}(\mu,k,f)$ (see \cite{FGA}).
From Equations (\ref{uno}) and (\ref{dos}) we find a closed immersion
\begin{equation*}
\mathbf{Q}^{r}_{g}(\mu,k,P)\times_{\textrm{H}_{g}}U_{g}\overset{id\times\psi}{\hookrightarrow}\mathbf{Q}^{r}_{g}(\mu,k,P)\times_{\textrm{H}_{g}}(\textrm{H}_{g}\times\mathbf{P}^{M})\simeq \mathbf{Q}^{r}_{g}(\mu,k,P)\times\mathbf{P}^{M}
\end{equation*}
We can push the universal quotient (\ref{tres}) forward to $ \mathbf{Q}^{r}_{g}(\mu,k,P)\times\mathbf{P}^{M}$ and compose it with the natural surjection  $\mathbb{C}^{n}\otimes \mathscr{O}_{\mathbf{Q}^{r}_{g}(\mu,k,P)\times\mathbf{P}^{M}}(-k)\twoheadrightarrow \mathbb{C}^{n}\otimes(id\times\psi)_{*}\phi^{*}\mathscr{O}_{U_{g}}(-k)$. This, in turn, induces an exact sequence,
\begin{equation*}
0\rightarrow \K\hookrightarrow  \mathbb{C}^{n}\otimes \mathscr{O}_{\mathbf{Q}^{r}_{g}(\mu,k,P)\times\mathbf{P}^{M}}(-k)\twoheadrightarrow\overline{ \mathscr{E}}\rightarrow 0,
\end{equation*}
all the sheaves being flat over $\mathbf{Q}^{r}_{g}(\mu,k,P)$. Further, there exists an integer $l_{1}$ such that, for every $l\geq l_1$, there is a projective embedding
\begin{align*}
i_{l}: \mathbf{Q}^{r}_{g}(\mu,k,P)\rightarrow \mathbf{Grass}(P(l), \mathbb{C}^{n}\otimes H^{0}(\mathbf{P}^{M},\mathcal{O}_{\mathbf{P}^{M}}(l-k))). 
\end{align*}
There is, in fact, an integer $l_{2}$ such that $ \forall l>l_{2}$($l>k$) there is a closed immersion
\begin{equation}
\pi\times i_{l}: \mathbf{Q}^{r}_{g}(\mu,k,P)\hookrightarrow \textrm{H}_{g}\times \mathbf{Grass}(P(l),\mathbb{C}^{n}\otimes H^{0}(\mathbf{P}^{M},\mathcal{O}_{\mathbf{P}^{M}}(l-k))),
\end{equation}
and composing with the Pl\"ucker embedding we get the desired projective embedding,
\begin{equation}
\pi\times i_{l}: \mathbf{Q}^{r}_{g}(\mu,k,P)\hookrightarrow \textrm{H}_{g}\times \mathbf{P}(H_3),
\end{equation}
with $H_3:=\bigwedge^{P(l)}( \mathbb{C}^{n}\otimes H^{0}(\mathbf{P}^{M},\mathscr{O}_{\mathbf{P}^{M}}(l-k)))$.

\subsection{Projective embedding of swamps data}\label{embedswamp}

The results proved so far show that there exists $k\in\mathbb{N}$ large enough such that any pair $(X,( \mathscr{F},\phi))$, given by a stable curve of genus $g$ (with polarization $\mathscr{O}_{X}(1):=\mathscr{\omega}_{X}^{\otimes 10}$) and a $\delta$-semistable swamp with Hilbert polynomial $P$, defines a point in the relatively projective $\textrm{H}_{g}$-scheme
$$
\mathbf{Q}^{r}_{g}(\mu,k,P)\times \mathbf{P}((((\mathbb{C}^{n})^{\otimes a})^{\oplus b})^{\vee}\otimes\mu_{*}\mathscr{O}_{U_g}(ak))\rightarrow \textrm{H}_{g}
$$

The natural number $k\in\mathbb{N}$ is fixed as before. Since $h^{1}(X,\mathscr{\omega}_{X}^{\otimes 10})=0$ and $h^{0}(X,\mathscr{\omega}_{X}^{\otimes 10})=10(2g-2)-g+1$ for every stable curve, we have that $\mu_{*}\mathscr{O}_{U_g}(ak)$ is locally free, and that  $R^{1}\mu_{*}\mathscr{O}_{U_g}(ak)=0$. Consider now the projective bundle
$$
\overline{\pi}: \mathbf{P}((((\mathbb{C}^{n})^{\otimes a})^{\oplus b})^{\vee}\otimes\mu_{*}\mathscr{O}_{U_g}(ak))\rightarrow  \textrm{H}_{g}
$$
Since $\mu_{*}\mathscr{O}_{U_g}(ak)=pr_{1*}\psi_{*}(\psi^{*}pr_{2}^{*}\mathscr{O}_{\mathbf{P}^{M}}(ak))$, the natural surjection
\begin{equation}\label{sur}
pr_{2}^{*}\mathscr{O}_{\mathbf{P}^{M}}(ak)\twoheadrightarrow\psi_{*} \psi^{*}pr_{2}^{*}\mathscr{O}_{\mathbf{P}^{M}}(ak),
\end{equation}
induces a diagram
\begin{equation*}
\xymatrix{
pr_{1*}pr_{2}^{*}\mathscr{O}_{\mathbf{P}^{M}}(ak)\eq[d]\ar[r]^{v_{k}} & pr_{1*}\psi_{*}\psi^{*}pr_{2}^{*} \mathscr{O}_{\mathbf{P}^{M}}(ak)\ar@{=}[d] \\
\mathscr{O}_{H_{g}}\otimes_{\mathbb{C}} H^{0}(\mathbf{P}^{M},\mathscr{O}_{\mathbf{P}^{M}}(ak)) & \mu_{*}\mathscr{O}_{U_g}(ak)
}
\end{equation*}
and therefore a morphism $\mathscr{O}_{H_{g}}\otimes_{\mathbb{C}} H^{0}(\mathbf{P}^{M},\mathscr{O}_{\mathbf{P}^{M}}(ak)) \rightarrow  \mu_{*}\mathscr{O}_{U_g}(ak)$, which will be denoted by $v_{k}'$.
By Serre's theorem, there is a natural number $N'\in\mathbb{N}$, that may be taken grater than $N$, such that if $k\geq N'$ then $v_{k}$ is surjective and we have a closed immersion
$$
\xymatrix{
\mathbf{P}((((\mathbb{C}^{n})^{\otimes a})^{\oplus b})^{\vee}\otimes\mu_{*}\mathscr{O}_{U_g}(ak))\ar@{^(->}[rr]_{\hspace{1.2cm}\tiny{closed}} \ar[rd]_{\overline{\pi}}& & \textrm{H}_{g}\times \mathbf{P}(H_1)\ar[ld] \\
  & \textrm{H}_{g} &
}
$$
where $H_{1}=(((\mathbb{C}^{n})^{\otimes a})^{\oplus b})^{\vee}) \otimes H^{0}(\mathbf{P}^{M},\mathscr{O}_{\mathbf{P}^{M}}(ak))$.

\subsection{Parameter space for swamps and the polarization}\label{parameterswamps}

Consider the fibered product
{ \footnotesize
\begin{equation*}
\xymatrix{
Y:=Q_{g}(\mu,n,f)\times_{H_{g}}\mathbf{P}((((\mathbb{C}^{n})^{\otimes a})^{\oplus b})^{\vee}\otimes\mu_{*}\mathscr{O}_{U_g}(ak))\ar[r]_{\ \ \ \ \ \ \ \ \ \ \pi_{2}}\ar[d]_{\pi_{1}}\ar@{-->}[rd]_{w} & \mathbf{P}((((\mathbb{C}^{n})^{\otimes a})^{\oplus b})^{\vee}\otimes\mu_{*}\mathscr{O}_{U_g}(ak)) \ar[d]^{\overline{\pi}} \\
\mathbf{Q}^{r}_{g}(\mu,k,P)\ar[r]^{\pi} & H_{g} 
}
\end{equation*}}
Giving $\pi_{2}$ is the same as giving a quotient invertible sheaf
\begin{equation}
(((\mathbb{C}^{n})^{\otimes a})^{\oplus b})^{\vee}\otimes w^{*}\mu_{*}\mathscr{O}(ak)\twoheadrightarrow \mathscr{L}
\end{equation}
on $Y$, which is the same as giving a nonzero morphism
\begin{equation}\label{cinco}
\varphi'_{Y}:(((\mathbb{C}^{n})^{\otimes a})^{\oplus b})\otimes \mathscr{O}_{Y} \twoheadrightarrow w^{*}\mu_{*}\mathscr{O}_{U_g}(ak)\otimes \mathscr{L},
\end{equation}
while giving $\pi_{1}$ is the same as giving a quotient sheaf
\begin{equation}\label{seis}
q_{Y}:\mathbb{C}^{n}\otimes \mathscr{O}_{Y\times_{H_{g}}U_{g}}\twoheadrightarrow  \mathscr{E}(k)
\end{equation}
on $Y\times_{H_{g}}U_{g}$. Now, we can pull (\ref{cinco}) back to $Y\times_{H_{g}}U_{g}$, and we get
\begin{equation}\label{siete}
\varphi''_{Y}:(((\mathbb{C}^{n})^{\otimes a})^{\oplus b})\otimes \mathscr{O}_{Y\times_{H_{g}}U_{g}} \twoheadrightarrow \pi_{Y}^{*}w^{*}\mu_{*}\mathscr{O}_{U_g}(ak)\otimes \pi_{Y}^{*}\mathscr{L}.
\end{equation}
From Equation (\ref{seis}) and Equation (\ref{siete}) we can form the following diagram
$$
\xymatrix{
0\ar[r]  & \mathscr{K}\ar@{^(->}[r]\ar@/_10mm/[rddd]_{\overline{ \overline{\varphi_{Y}}}} & (((\mathbb{C}^{n})^{\otimes a})^{\oplus b})\otimes \mathscr{O}_{Y\times_{H_{g}}U_{g}}\ar[r] \ar[d]^{\varphi''_{Y}}\ar@/^25mm/[ddd]^ {\overline{\varphi_{Y}}} & ( \mathscr{E}(k)^{\otimes a})^{\oplus b}\ar[r] & 0\\
&&\pi_{Y}^{*}w^{*}\mu_{*}\mathscr{O}_{U_g}(ak)\otimes \pi_{Y}^{*}\mathscr{L}\ar@{=}[d]& & \\
&& \pi_{U_{g}}^{*}\mu^{*}\mu_{*}\mathscr{O}_{U_g}(ak) \otimes\pi_{Y}^{*}\mathscr{L}\ar@{->>}[d] && \\
&& \pi_{U_{g}}^{*}\mathscr{O}_{U_g}(ak)\otimes \pi_{Y}^{*}\mathscr{L} &&
}
$$
Since $\pi_{U_{g}}^{*}\mathscr{O}_{U_g}(ak)\otimes\pi_{Y}^{*}\mathscr{L}$ is flat over $Y$, there exists a closed subscheme
\begin{equation*}
\mathcal{Z}\subset \mathbf{Q}^{r}_{g}(\mu,k,P)\times_{H_{g}} \mathbf{P}((((\mathbb{C}^{n})^{\otimes a})^{\oplus b})^{\vee}) \otimes\mu_{*}\mathscr{O}_{U_g}(ak))
\end{equation*}
characterized by the fact that $\overline{\overline{\varphi_{Y}}}|_{\mathcal{Z}}=0$. Restricting the diagram to $\mathcal{Z}$ we show that $\overline{\varphi_{Y}}$ lifts to $( \mathscr{E}(k)^{\otimes a})^{\oplus b}$, that is, it factorices through a morphism
\begin{equation}\label{ocho}
\varphi_{\mathcal{Z}}:( \mathscr{E}(k)|_{\mathcal{Z}}^{\otimes a})^{\oplus b}\rightarrow \pi_{U_{g}}^{*}\mathscr{O}_{U_g}(ak)\otimes\pi_{Y}^{*}\mathscr{L}.
\end{equation}
Then, the closed subscheme $\mathcal{Z}\subset Y$ carries a universal family of swamps over $\textrm{H}_{g}$
\begin{equation}\label{unifamilyswamps}
\begin{split}
q_{\mathcal{Z}}&:\mathbb{C}^{n}\otimes \mathscr{O}_{\mathcal{Z}\times_{H_{g}}U_{g}}(-k) \rightarrow  \mathscr{E}|_{\mathcal{Z}}, \\
\varphi_{\mathcal{Z}}&:( \mathscr{E}(k)|_{\mathcal{Z}}^{\otimes a})^{\oplus b}\rightarrow \pi_{U_{g}}^{*}\mathscr{O}_{U_g}(ak)\otimes\pi_{Y}^{*}\mathscr{L}.
\end{split}
\end{equation}

The group $\textrm{SL}_{M+1}$, acting on $\textrm{H}_{g}$, induces naturally actions on $\mathbf{Q}^{r}_{g}(\mu,k,P)$ and $ \mathbf{P}((((\mathbb{C}^{n})^{\otimes a})^{\oplus b})^{\vee}\otimes\mu_{*}\mathscr{O}_{U_g}(ak))$, and the group $\textrm{SL}_{n}$, acting on $\mathbb{C}^{n}$, induces actions on $\mathbf{Q}^{r}_{g}(\mu,k,P)$ and $ \mathbf{P}((((\mathbb{C}^{n})^{\otimes a})^{\oplus b})^{\vee}\otimes\mu_{*}\mathscr{O}_{U_g}(ak))$ as well. Therefore, the group $\textrm{SL}_{M+1}\times \textrm{SL}_{n}$ is acting on $\mathbf{Q}^{r}_{g}(\mu,k,P)\times \mathbf{P}((((\mathbb{C}^{n})^{\otimes a})^{\oplus b})^{\vee}\otimes\mu_{*}\mathscr{O}_{U_g}(ak))$ and the actions commute with each other.

Combining Section \ref{embedquot} and Section \ref{embedswamp},  we find a projective embedding
$$
\xymatrix{
\mathcal{Z}\ar@{^(->}[d] \\
\mathbf{Q}^{r}_{g}(\mu,k,P)\times_{H_{g}}\mathbf{P}((((\mathbb{C}^{n})^{\otimes a})^{\oplus b})^{\vee}) \otimes\mu_{*}\mathscr{O}_{U_g}(ak))\ar@{^(->}[d] \\
 H_{g}\times \mathbf{Grass}(P(l),\mathbb{C}^{n}\otimes H^{0}(\mathbf{P}^{M},\mathscr{O}_{\mathbf{P}^{M}} (l-k)))\times
\mathbf{P}(H_1)
}
$$
Finally we can conclude that for large $k,l,s$ we have a closed immersion
\begin{equation*}
\begin{split}
\mathcal{Z}\hookrightarrow & \mathbf{Grass}(h(s),H^{0}(\mathbf{P}^{M},\mathscr{O}_{\mathbf{P}^{M}}(s)))\times\\ 
\times &\mathbf{Grass}(P(l),\mathbb{C}^{n}\otimes H^{0}(\mathbf{P}^{M},\mathscr{O}_{\mathbf{P}^{M}} (l-k))) \times\\
\times & \mathbf{P}(H_1) 
\end{split}
\end{equation*}
Considering the Pl\"ucker embedding, we finally get the closed immersion
$$
\xymatrix{
j_{s,t,k}:\mathcal{Z}\ar@{^(->}[r] & \mathbf{P}(H_1)\times\mathbf{P}(H_2)\times\mathbf{P}(H_3),
}
$$
where
\begin{align*}
H_1=&(((\mathbb{C}^{n})^{\otimes a})^{\oplus b})^{\vee} \otimes H^{0}(\mathbf{P}^{M}, \mathscr{O}_{\mathbf{P}^{M}}(ak)) \\
H_2=&\bigwedge^{h(s)} H^{0}(\mathbf{P}^{M}, \mathscr{O}_{\mathbf{P}^{M}}(\overline{s})) \\
H_3=&\bigwedge^{P(l)}( \mathbb{C}^{n}\otimes H^{0}(\mathbf{P}^{M},\mathscr{O}_{\mathbf{P}^{M}}(l-k))),
\end{align*}
and which is $\textrm{SL}_{M+1}\times\textrm{SL}_{n}$-equivariant. For each $i=1,2,3$, denote by $\pi_{i}$ the projection onto the $i$th factor, $\pi_{i}:\mathbf{P}(H_1)\times\mathbf{P}(H_2)\times\mathbf{P}(H_3)\rightarrow \mathbf{P}(H_i)$.
Let $\pi_{1}^{*}\mathscr{O}(\alpha)\otimes \pi_{3}^{*}\mathscr{O}(\gamma)$ be a polarization on $\mathbf{P}(H_1)\times \mathbf{P}(H_3)$ and recall that 
\begin{equation}
\begin{split}
H^{0}(\mathbf{P}(H_1),\mathscr{O}_{\mathbf{P}(H_1)}(\alpha)) =S^{\alpha}H_1, \ H^{0}(\mathbf{P}(H_3),\mathscr{O}_{\mathbf{P}(H_3)}(\gamma))=S^{\gamma}H_3
\end{split}
\end{equation}
We have a canonical surjection
\begin{equation*}
(S^{\alpha}H_1\otimes S^{\gamma}H_3)\otimes \mathscr{O}_{\mathbf{P}(H_1)\times \mathbf{P}(H_3)}\twoheadrightarrow \pi_{1}^{*}\mathscr{O}_{\mathbf{P}(H_1)}(\alpha) \otimes \pi_{3}^{*}\mathscr{O}_{\mathbf{P}(H_3)}(\gamma)
\end{equation*}
and then a canonical morphism, $s:\mathbf{P}(H_1)\times\mathbf{P}(H_3)\rightarrow \mathbf{P}(S^{\alpha}H_{1}\otimes S^{\gamma}H_3)$, which is in fact a closed immersion (the $(\alpha,\gamma)$ Segre embedding). Define $J:=S^{\alpha}H_1\otimes S^{\gamma}H_3$. Clearly $s^{*}\mathscr{O}_{\mathbf{^P}(J)}(1)= \pi_{1}^{*}\mathscr{O}_{\mathbf{P}(H_1)}(\alpha)\otimes\pi_{3}^{*} \mathscr{O}_{\mathbf{P}(H_3)}(\gamma)$. Consider now the composition:
\begin{equation}\label{impimmersion}
j_{s,l,k}:\mathcal{Z}\hookrightarrow \mathbf{P}(H_1)\times \mathbf{P}(H_2) \times \mathbf{P}(H_3)\overset{\pi_{2}\times s}{\longhookrightarrow} \mathbf{P}(H_2) \times \mathbf{P}(J).
\end{equation}
For the polarization $\mathscr{O}(\beta,1)$ on $\mathbf{P}(H_2)\times \mathbf{P}(J)$, with $\beta\in\mathbb{N}$, we have
\begin{equation*}
(\pi_{2}\times s)^{*}\mathscr{O}(\beta,1)= \pi_{1}^{*}\mathscr{O}_{\mathbf{P}(H_1)}(\alpha)\otimes \pi_{2}^{*}\mathscr{O}_{\mathbf{P}(H_2)}(\beta)\otimes\pi_{3}^{*}\mathscr{O}_{\mathbf{P}(H_3)}(\gamma).
\end{equation*}
From now onwards, $\alpha$ and $\gamma$ are assumed to satisfy 
\begin{equation}
\dfrac{\alpha}{\gamma}=\dfrac{P(l)-P(k)}{P(k)-a\delta} \delta,
\end{equation}
as in the fiber-wise problem.

\subsection{Construction of the universal moduli space}
Let $\xi: \textrm{SL}_{M+1}\rightarrow \textrm{SL}(H_2)$ and $\omega: \textrm{SL}_{M+1}\rightarrow \textrm{SL}(J)$ be two rational representations of $\textrm{SL}_{M+1}$. Denote by
\begin{align*}
\rho_{H_2}:& \mathbf{P}(H_2)\times \mathbf{P}(J)\rightarrow \mathbf{P}(H_2),\\
\overline{\rho_{H_2}}:& \mathbf{P}(H_1)\times \mathbf{P}(H_2) \times \mathbf{P}(H_3)\rightarrow \mathbf{P}(H_2),
\end{align*}
the projections onto $\mathbf{P}(H_2)$. The following two  results  will be applied to $\mathbf{P}(H_2)\times \mathbf{P}(J)$.
 \begin{proposition}\emph{\cite[Proposition 7.1.1]{Pando}}
There exists $\beta_0=\beta_{0}(\xi,\omega)$ such that $ \forall \beta>\beta_0$,
\begin{equation*}
\rho_{H_2}^{-1}(\mathbf{P}(H_2)^{s})\subset (\mathbf{P}(H_2)\times \mathbf{P}(J))^{s}_{[\beta,1]}.
\end{equation*}
 \end{proposition}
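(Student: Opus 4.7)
The plan is to invoke the Hilbert-Mumford numerical criterion, exploiting the additivity of Hilbert-Mumford indices under tensor product polarisations: for any one-parameter subgroup $\lambda$ of $\textrm{SL}_{M+1}$ and any point $(x,y)\in\mathbf{P}(H_2)\times\mathbf{P}(J)$,
\begin{equation*}
\mu^{\mathscr{O}(\beta,1)}((x,y),\lambda)=\beta\,\mu^{\mathscr{O}_{\mathbf{P}(H_2)}(1)}(x,\lambda)+\mu^{\mathscr{O}_{\mathbf{P}(J)}(1)}(y,\lambda).
\end{equation*}
If $x\in\mathbf{P}(H_2)^{s}$, the first summand is strictly negative for every non-trivial $\lambda$, and heuristically taking $\beta$ large dominates any (possibly positive) contribution of the second summand. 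The task is to promote this heuristic to an estimate uniform in $x\in\mathbf{P}(H_2)^{s}$, in $y\in\mathbf{P}(J)$, and in the non-trivial 1-PS $\lambda$.

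First I would fix a maximal torus $T\subset\textrm{SL}_{M+1}$; by conjugation invariance of the Hilbert-Mumford index one may assume $\lambda$ is a 1-PS of $T$. Fixing a Weyl-invariant norm $\|\cdot\|$ on the cocharacter lattice of $T$ and using that the set of $T$-weights of $J$ (which is finite because $\omega$ is rational) is finite, one obtains a uniform bound
\begin{equation*}
|\mu^{\mathscr{O}_{\mathbf{P}(J)}(1)}(y,\lambda)|\leq N_{\omega}\|\lambda\|
\end{equation*}
valid for every $y\in\mathbf{P}(J)$, where $N_{\omega}$ depends only on the weights of $\omega$. The key is then to establish the complementary uniform negative upper bound
\begin{equation*}
\mu^{\mathscr{O}_{\mathbf{P}(H_2)}(1)}(x,\lambda)\leq -\epsilon\,\|\lambda\|
\end{equation*}
with $\epsilon=\epsilon(\xi)>0$ independent of $x\in\mathbf{P}(H_2)^{s}$ and of the non-trivial primitive 1-PS $\lambda$ of $T$. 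Granted this, the two estimates combine to give $\mu^{\mathscr{O}(\beta,1)}((x,y),\lambda)\leq (-\beta\epsilon+N_{\omega})\|\lambda\|<0$ as soon as $\beta>\beta_{0}:=N_{\omega}/\epsilon$, and the Hilbert-Mumford criterion delivers stability of $(x,y)$ for the polarisation $\mathscr{O}(\beta,1)$.

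The main obstacle is producing the constant $\epsilon$ uniformly in $x$ and $\lambda$. I would argue as follows. For every subset $S$ of the finite set of $T$-weights of $\xi$, the Hilbert-Mumford function of any $x\in\mathbf{P}(H_2)$ whose support (i.e.\ the set of weights $\chi$ for which the component of $x$ in the weight space $H_2^{\chi}$ is non-zero) equals $S$ is the piecewise-linear function $\lambda\mapsto\max_{\chi\in S}\chi(\lambda)$, depending only on $S$. Stability of $x$ is precisely the statement that this function is strictly negative on every non-trivial $\lambda$; by homogeneity and compactness of the unit sphere in the cocharacter space of $T$, it is then bounded above by some $-\epsilon_{S}<0$. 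Taking the minimum of the (finitely many) $\epsilon_{S}$ over stable supports $S$ yields the required uniform $\epsilon$. Because $N_{\omega}$ and $\epsilon$ depend only on the finite weight data of $\omega$ and $\xi$ respectively, the resulting $\beta_{0}$ depends only on $\xi$ and $\omega$, as asserted.
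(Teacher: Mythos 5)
The paper offers no proof of this statement: it is imported verbatim as Proposition 7.1.1 of Pandharipande's paper, so there is no internal argument to compare yours against. Your proof is, in substance, the standard proof of that cited result, and it is essentially correct: the additivity of the Hilbert--Mumford index over the two factors, the bound $|\mu^{\mathscr{O}_{\mathbf{P}(J)}(1)}(y,\lambda)|\leq N_{\omega}\|\lambda\|$ coming from the finitely many $T$-weights of $\omega$, and the uniform strictly signed bound on the $\mathbf{P}(H_2)$-factor obtained from the finiteness of possible weight supports together with homogeneity and compactness of the unit sphere in the real cocharacter space are exactly the right ingredients, and the reduction to a fixed maximal torus is legitimate because the stable locus is $\textrm{SL}_{M+1}$-invariant and you quantify uniformly over all $x$ and $y$. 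The one point to repair is a sign convention slip in the key step: if the Hilbert--Mumford function of a point with support $S$ is written as $\lambda\mapsto\max_{\chi\in S}\langle\chi,\lambda\rangle$, then the condition ``strictly negative for every non-trivial $\lambda$'' can never hold, since $\max_{\chi\in S}\langle\chi,\lambda\rangle\leq 0$ for all $\lambda$ forces every linear form $\langle\chi,\cdot\rangle$ to vanish identically. You should take either $\min_{\chi\in S}\langle\chi,\lambda\rangle<0$ or $\max_{\chi\in S}\langle\chi,\lambda\rangle>0$ as the numerical criterion for stability, consistently with whichever sign convention you adopt for $\mu$; with that fixed, the compactness argument produces the uniform constant $\epsilon$ exactly as you describe, and the conclusion $\beta_{0}=N_{\omega}/\epsilon$ stands, depending only on the weight data of $\xi$ and $\omega$ as required.
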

 \begin{proposition}\emph{\cite[Proposition 7.1.2]{Pando}}
There exists $\beta_1=\beta_{1}(\xi,\omega)$ such that $ \forall \beta>\beta_{1}$,
\begin{equation*}
(\mathbf{P}(H_2)\times \mathbf{P}(J))^{ss}_{[\beta,1]}\subset \rho_{H_2}^{-1}(\mathbf{P}(H_2)^{ss}).
\end{equation*}
 \end{proposition}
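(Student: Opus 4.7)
I would use the Hilbert--Mumford numerical criterion in the standard ``heavy-polarization'' form. For any 1-PS $\lambda$ of $\textrm{SL}_{M+1}$, the Mumford weight with respect to $\mathscr{O}(\beta,1)$ decomposes additively as
\begin{equation*}
\mu^{[\beta,1]}(\lambda,(x,y)) \;=\; \beta\,\mu_\xi(\lambda,x) + \mu_\omega(\lambda,y),
\end{equation*}
and I would argue the contrapositive: given $x\in\mathbf{P}(H_2)\setminus\mathbf{P}(H_2)^{ss}$, for $\beta$ large I exhibit a single 1-PS that $[\beta,1]$-destabilizes $(x,y)$, regardless of $y\in\mathbf{P}(J)$.

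The argument rests on two uniform estimates. First, a \emph{uniform lower bound} on the instability measure: there exists a constant $m>0$ such that every unstable $x\in\mathbf{P}(H_2)\setminus\mathbf{P}(H_2)^{ss}$ admits a 1-PS $\lambda$ with $\mu_\xi(\lambda,x)\ge m\,\|\lambda\|$, for a fixed $\textrm{SL}_{M+1}$-invariant norm on 1-PSs. This comes from the Kempf--Hesselink--Kirwan stratification of the unstable locus into finitely many locally closed strata, on each of which the optimal Kempf 1-PS has a constant normalized weight, combined with the fact that the unstable locus is closed (hence compact) in $\mathbf{P}(H_2)$. Second, a \emph{uniform upper bound} coming from $\omega$: since $\omega$ is a rational representation into a finite-dimensional space, its set of characters is finite, producing a constant $A>0$ with $|\mu_\omega(\lambda,y)|\le A\,\|\lambda\|$ for every 1-PS $\lambda$ and every $y\in\mathbf{P}(J)$.

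Combining these, if $x\in\mathbf{P}(H_2)\setminus\mathbf{P}(H_2)^{ss}$ and $\lambda$ is chosen as in the first estimate, then
\begin{equation*}
\mu^{[\beta,1]}(\lambda,(x,y)) \;\ge\; \beta\,m\,\|\lambda\| - A\,\|\lambda\| \;=\; (\beta m - A)\,\|\lambda\|,
\end{equation*}
which is strictly positive as soon as $\beta>A/m$. Setting $\beta_1:=A/m$, the numerical criterion fails for $(x,y)$, contradicting $[\beta,1]$-semistability. Hence $(\mathbf{P}(H_2)\times\mathbf{P}(J))^{ss}_{[\beta,1]}\subset \rho_{H_2}^{-1}(\mathbf{P}(H_2)^{ss})$, as required.

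\emph{Main obstacle.} The delicate ingredient is securing the constant $m>0$: pointwise positivity of the Kempf--Ness instability measure on the unstable locus is automatic, but keeping it bounded away from zero \emph{uniformly} across the whole locus is the crux and requires the finiteness of the Kempf--Hesselink stratification (or, equivalently, the lower semicontinuity of the instability measure on the compact projective variety $\mathbf{P}(H_2)$). Without this uniformity, one could only extract a bound on $\beta$ pointwise in $x$ and not a global $\beta_1$ that serves the entire unstable locus simultaneously.
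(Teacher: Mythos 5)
The paper does not actually prove this statement: it is quoted directly from \cite[Proposition 7.1.2]{Pando}, so there is no internal proof to compare against. Your argument is correct and is essentially the standard one underlying the cited result: additivity of the Hilbert--Mumford weight over the two factors under the polarization $\mathscr{O}(\beta,1)$, a uniform bound $|\mu_\omega(\lambda,y)|\le A\|\lambda\|$ from the finiteness of the torus characters of the rational representation $\omega$, and a uniform positive lower bound $m$ for the normalized instability measure on $\mathbf{P}(H_2)\setminus\mathbf{P}(H_2)^{ss}$, which you rightly single out as the crux and correctly secure via the finiteness of the Kempf--Hesselink--Kirwan stratification. The only point worth making explicit is the sign convention for $\mu$ (you are taking semistability to mean $\mu\le 0$ for every one-parameter subgroup); with either convention the estimate $\beta m - A>0$ for $\beta>A/m$ yields the claimed inclusion.
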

The superscripts $\{s',ss'\},\{s'',ss''\},\{s,ss\}$ will denote stability (semistability) with respect $\textrm{SL}_{M+1},\textrm{SL}_{n}$ and $\textrm{SL}_{M+1}\times \textrm{SL}_{n}$ respectively, while the subscripts $[\cdot,  \cdot]$ (or $[\cdot,\cdot,\cdot]$) denotes the polarization the semistability condition is being analyzed respect with. Define $\mathcal{Z}^{ss}_{[\alpha,\beta,\gamma]}:=\mathcal{Z}\cap (\mathbf{P}(V)\times \mathbf{P}(W)\times \mathbf{P}(H))^{ss}_{[\alpha,\beta,\gamma]}$.

 \begin{proposition}\label{existenceswamps}
The quotient  $\mathcal{T}^{\delta\textrm{-(s)s}}_{P,g,a,b}:=\mathcal{Z}^{ss}_{[\alpha,\beta,\gamma]}\slas (\emph{SL}_{N+1}\times \emph{SL}_{n})$ exists and is projective.
 \end{proposition}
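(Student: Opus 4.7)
The plan is to build the quotient in two stages by exploiting the fact that the $\textrm{SL}_{M+1}$ and $\textrm{SL}_n$ actions on $\mathcal{Z}$ commute. First I would invoke the two cited propositions from \cite{Pando}: applied to the embedding $j_{s,l,k}$ of Equation (\ref{impimmersion}) composed with the projection to $\mathbf{P}(H_2)\times\mathbf{P}(J)$, they show that for $\beta$ large enough (depending on the natural $\textrm{SL}_{M+1}$-representations $\xi$ on $H_2$ and $\omega$ on $J$) one has the equality
\begin{equation*}
(\mathbf{P}(H_2)\times\mathbf{P}(J))^{ss'}_{[\beta,1]}=\rho_{H_2}^{-1}(\mathbf{P}(H_2)^{ss'}).
\end{equation*}
Pulling back along $j_{s,l,k}$, the $\textrm{SL}_{M+1}$-semistable locus of $\mathcal{Z}$ with respect to the polarization $\mathscr{O}(\alpha,\beta,\gamma)$ coincides with the preimage in $\mathcal{Z}$ of $\mathbf{P}(H_2)^{ss'}$.

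Next I would use Gieseker's results recalled at the beginning of the section: the locus $\textrm{H}_g$ of non-degenerate $10$-canonical stable curves lies inside $\mathbf{P}(H_2)^{ss'}$ and is closed there, and $\overline{\textrm{M}}_g=\textrm{H}_g/\textrm{SL}_{M+1}$. By construction, $\mathcal{Z}$ maps to $\textrm{H}_g$ via the structural projection to the relative Quot scheme, so the image of $\mathcal{Z}$ in $\mathbf{P}(H_2)$ is contained in $\textrm{H}_g$ and hence in $\mathbf{P}(H_2)^{ss'}$; moreover this image is closed in $\mathbf{P}(H_2)^{ss'}$. Consequently $\mathcal{Z}^{ss'}_{[\alpha,\beta,\gamma]}=\mathcal{Z}$ itself, and the quotient $\mathcal{Z}/\!/\textrm{SL}_{M+1}$ exists and is projective, admitting a forgetful morphism to $\overline{\textrm{M}}_g$.

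For the second stage, I would exploit the commutativity of the two actions to take the $\textrm{SL}_n$-quotient of $\mathcal{Z}/\!/\textrm{SL}_{M+1}$. The $\textrm{SL}_n$-semistable locus is fiberwise analysed by Theorem \ref{teoremon}: on each fiber $Z_{k,\mathscr{D}}(X_h)$ over a point $h\in\textrm{H}_g$ with stable curve $X_h$, a point is $\textrm{SL}_n$-GIT semistable with respect to $\mathscr{O}(\alpha,\gamma)$ if and only if the corresponding pair $(\F,\phi)$ is a $\delta$-semistable swamp inducing an isomorphism $F\simeq H^0(X_h,\F(k))$. The crucial ingredient here is the uniform behaviour of the parameters $k$, $l$ (and hence of the polarization $\alpha/\gamma$ in (\ref{polarization})) over $\overline{\textrm{M}}_g$, which was precisely the content of Theorem \ref{bound3}, Theorem \ref{bound10}, Theorem \ref{sectional}, and Proposition \ref{propl}: all thresholds depend only on $P,a,b,\delta,g,h$ and not on the base curve. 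This guarantees that a single linearisation works simultaneously on all fibers. The $\textrm{SL}_n$-quotient therefore glues into a projective scheme, and by commutativity of the two actions it coincides with $\mathcal{Z}^{ss}_{[\alpha,\beta,\gamma]}/\!/(\textrm{SL}_{M+1}\times\textrm{SL}_n)$.

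The main obstacle is the delicate interplay of polarisation parameters: one has to pick $\alpha,\beta,\gamma$ simultaneously so that (i) the inclusions of the two propositions from \cite{Pando} both hold, giving $(\textrm{SL}_{M+1})$-semistability $=\rho_{H_2}^{-1}(\mathbf{P}(H_2)^{ss'})$, (ii) the ratio $\alpha/\gamma$ is that imposed by the fiberwise problem so that $(\textrm{SL}_n)$-semistability translates into $\delta$-semistability of swamps via Theorem \ref{teoremon}, and (iii) these choices are uniform in the base curve. The uniformity results proved in Section \ref{sectionuniform} are exactly what make such a simultaneous choice possible, and without them the argument would only produce a quotient curve by curve rather than a global compactification.
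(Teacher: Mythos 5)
Your first stage is essentially the paper's argument: the two propositions from \cite{Pando} give, for $\beta\gg 0$, that $\overline{\rho_{H_2}}^{-1}(\mathbf{P}(H_2)^{s'})$ lies in the $\textrm{SL}_{M+1}$-stable locus and that the $\textrm{SL}_{M+1}$-semistable locus lies in $\overline{\rho_{H_2}}^{-1}(\mathbf{P}(H_2)^{ss'})$; combined with Gieseker's facts that $\textrm{H}_g\subset\mathbf{P}(H_2)^{s'}$ and that $\textrm{H}_g$ is closed in $\mathbf{P}(H_2)^{ss'}$, one gets $\mathcal{Z}\subset(\mathbf{P}(H_1)\times\mathbf{P}(H_2)\times\mathbf{P}(H_3))^{s'}_{[\alpha,\beta,\gamma]}$. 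Two remarks here. First, the propositions give two one-sided inclusions (one for stability, one for semistability), not the equality of semistable loci you assert; what saves you is that $\textrm{H}_g$ sits in the \emph{stable} locus. Second, knowing that the image of $\mathcal{Z}$ in $\mathbf{P}(H_2)$ is closed in $\mathbf{P}(H_2)^{ss'}$ is not enough: you need $\mathcal{Z}$ itself to be closed in $\overline{\rho_{H_2}}^{-1}(\mathbf{P}(H_2)^{ss'})$, and for that the paper explicitly invokes the properness of $\mathcal{Z}\to\textrm{H}_g$ (so that $\mathcal{Z}\hookrightarrow\textrm{H}_g\times\mathbf{P}(H_1)\times\mathbf{P}(H_3)$ is a closed immersion). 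You omit this step.

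The genuine gap is in your second stage. The paper does \emph{not} quotient in two stages: having shown $\mathcal{Z}$ is closed in the $\textrm{SL}_{M+1}$-semistable locus of the ambient product, it intersects with the smaller locus $(\mathbf{P}(H_1)\times\mathbf{P}(H_2)\times\mathbf{P}(H_3))^{ss}_{[\alpha,\beta,\gamma]}$ for the full group $\textrm{SL}_{M+1}\times\textrm{SL}_n$ to conclude that $\mathcal{Z}^{ss}_{[\alpha,\beta,\gamma]}$ is a closed invariant subscheme of that semistable locus; existence and projectivity of the quotient then follow in one stroke from standard GIT. Your alternative --- taking the fiberwise $\textrm{SL}_n$-quotients supplied by Theorem \ref{teoremon} and asserting that they ``glue into a projective scheme'' --- is not a construction: GIT quotients are obtained globally from a linearized action on a (closed subscheme of the semistable locus of a) projective scheme, not by gluing fiberwise quotients, and the uniformity of $k,l,\alpha/\gamma$ only tells you which points are semistable fiber by fiber, not that a projective quotient exists. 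Moreover, the identification of product-group semistability with $\textrm{SL}_n$-semistability on $\mathcal{Z}$ (which your ``commutativity'' appeal implicitly requires) is a nontrivial Hilbert--Mumford argument that the paper defers to Proposition \ref{closedpoints}; it is not needed for, and should not be smuggled into, the proof of existence and projectivity. The missing idea is precisely the closedness of $\mathcal{Z}^{ss}_{[\alpha,\beta,\gamma]}$ inside the ambient semistable locus.
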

\begin{proof}
By above propositions we can find $\beta>\textrm{max}\{\beta_{0},\beta_{1}\}$ such that
\begin{align}\label{pandos1}
\rho_{H_2}^{-1}(\mathbf{P}(H_2)^{s'})&\subset (\mathbf{P}(H_2)\times \mathbf{P}(J))^{s'}_{[\beta,1]}\\ \label{pandos2}
(\mathbf{P}(H_2)\times \mathbf{P}(J))^{ss'}_{[\beta,1]}&\subset \rho_{H_2}^{-1}(\mathbf{P}(H_2)^{ss'})
\end{align}
If we apply $(id\times s)^{-1}$ to Equation (\ref{pandos1}) and Equation (\ref{pandos2}) we get
{\small
\begin{equation}\label{igual}
\xymatrix{
(id\times s)^{-1}\rho_{H_2}^{-1}(\mathbf{P}(H_2)^{s'})\ar@{=}[d] & \subset &(id\times s)^{-1} (\mathbf{P}(H_2)\times \mathbf{P}(J))^{s'}_{[\beta,1]}\ar@{=}[d] \\
\overline{\rho_{H_2}}^{-1}(\mathbf{P}(H_2)^{s'}) & & (\mathbf{P}(H_1)\times \mathbf{P}(H_2)\times \mathbf{P}(H_3))^{s'}_{[\alpha,k,\beta]} \\ 
(id\times s)^{-1}(\mathbf{P}(H_2)\times \mathbf{P}(J))^{ss'}_{[\beta,1]}\ar@{=}[d]&\subset &(id\times s)^{-1}\rho_{H_2}^{-1}(\mathbf{P}(H_2)^{ss'}) \ar@{=}[d]\\
(\mathbf{P}(H_1)\times \mathbf{P}(H_2)\times \mathbf{P}(H_3))^{ss'}_{[\alpha,\beta,\gamma]} & & \overline{\rho_{H_2}}^{-1}(\mathbf{P}(H_2)^{ss'})
}
\end{equation}
}
From the fact  that $H_{g}\subset \mathbf{P}(H_2)^{s'}$ and Equation (\ref{igual}), we find
\begin{equation}\label{inclu1}
H_{g}\times \mathbf{P}(H_1)\times \mathbf{P}(H_3) \subset (\mathbf{P}(H_1)\times \mathbf{P}(H_2)\times \mathbf{P}(H_3))^{s'}_{[\alpha,\beta,\gamma]}
\end{equation}
Therefore we have 
\begin{equation}\label{inclu2}
\mathcal{Z}\subset (\mathbf{P}(H_1)\times \mathbf{P}(H_2)\times \mathbf{P}(H_3))^{s'}_{[\alpha,\beta,\gamma]}
\end{equation}
Since $\textrm{H}_{g}$ is closed in $\mathbf{P}(H_2)^{ss'}$ we know that
\begin{equation}
\textrm{H}_{g}\times \mathbf{P}(H_1)\times \mathbf{P}(H_3)\underset{closed}{\subset}\rho_{H_2}^{-1} (\mathbf{P}(H_2)^{ss'})
\end{equation}
Now, we have the following diagram
{ \footnotesize
\begin{equation*}
\xymatrix@C=3mm{
\mathcal{Z}\ar@{^(->}[r]\ar@{^(->}[d]_{\varepsilon} & (\mathbf{P}(H_1)\times \mathbf{P}(H_2)\times \mathbf{P}(H_3))^{s'}_{[\alpha\beta,\gamma]} \ar@{^(->}[r] & (\mathbf{P}(H_1)\times \mathbf{P}(H_2)\times \mathbf{P}(H_3))^{ss'}_{[\alpha,\beta,\gamma]}\ar@{^(->}[d]\\
H_{g}\times \mathbf{P}(H_1)\times \mathbf{P}(H_3) \ar@{^(->}[rr]^{closed} & & \overline{\rho_{H_2}}^{-1} (\mathbf{P}(H_2)^{ss'})
}
\end{equation*}}
Since $\mathcal{Z}$ is projective over $\textrm{H}_{g}$, $\varepsilon$ is closed, so 
$$\mathcal{Z}\subset (\mathbf{P}(H_1)\times \mathbf{P}(H_2)\times \mathbf{P}(H_3))^{ss'}_{[\alpha,\beta,\gamma]}\overset{\varsigma}{\subset} \overline{\rho_{H_2}}^{-1}(\mathbf{P}(H_2)^{ss'})$$ 
is closed in $\overline{\rho}_{H_2}(\mathbf{P}(H_2)^{ss'})$. Since $\varsigma$ is open and both spaces are open we conclude that $\mathcal{Z}$ is closed in $(\mathbf{P}(H_1)\times \mathbf{P}(H_2)\times \mathbf{P}(H_3))^{ss'}_{[\alpha,\beta,\gamma]}$.
Since 
$$(\mathbf{P}(H_1)\times \mathbf{P}(H_2)\times \mathbf{P}(H_3))^{ss}_{[\alpha,\beta,\gamma]}\subset(\mathbf{P}(H_1)\times \mathbf{P}(H_2)\times \mathbf{P}(H_3))^{ss'}_{[\alpha,\beta,\gamma]},$$ 
and
$$\mathcal{Z}^{ss}_{[\alpha,\beta,\gamma]}=\mathcal{Z}\cap (\mathbf{P}(V)\times \mathbf{P}(W)\times \mathbf{P}(H))^{ss}_{[\alpha,\beta,\gamma]},$$ 
we deduce that $\mathcal{Z}^{ss}_{[\alpha,\beta,\gamma]}$ is closed in $(\mathbf{P}(V)\times \mathbf{P}(W)\times \mathbf{P}(H))^{ss}_{[\alpha,\beta,\gamma]}$.
Therefore,
\begin{equation}
\mathcal{T}^{\delta\textrm{-(s)s}}_{P,g,a,b}:=\mathcal{Z}^{s(s)}_{[\alpha,\beta,\gamma]}\slas (\textrm{SL}_{M+1}\times \textrm{SL}_{n})
\end{equation}
exists and is projective.
\end{proof}

 \begin{proposition}\label{closedpoints}
The points of $\mathcal{Z}^{s(s)}_{[k,\alpha,\beta]}$ correspond exactly with pairs $(X,(q,\phi))$ where $X$ is a 10-canonical stable curve of genus $g$, $q:\mathbb{C}^{n}\otimes\mathscr{O}_{X}(-k)\rightarrow \mathscr{F}$ is a quotient of uniform multi-rank $r$ such that $H^{0}(q(k)):\mathbb{C}^{n}\rightarrow H^{0}(X, \mathscr{F}(k))$ is an isomorphism, $( \mathscr{F},\phi)$ being a $\delta$-(semi)stable swamp of type $(a,b,\mathscr{O}_{X})$ and with Hilbert polynomial $P(n)$.
 \end{proposition}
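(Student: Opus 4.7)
The plan is to combine the fibre-wise characterisation of GIT semistability for swamps on a single curve (Theorem \ref{teoremon}) with the Gieseker--Pandharipande-style splitting of the two group actions that was already exploited in the proof of Proposition \ref{existenceswamps}. First I would unpack what a closed point of $\mathcal{Z}$ encodes. Such a point lies over some $h \in \textrm{H}_g$, which via the universal family $U_g \to \textrm{H}_g$ provides a $10$-canonical stable curve $X \hookrightarrow \mathbf{P}^{M}$ with $\mathscr{O}_X(1) = \mathscr{\omega}_X^{\otimes 10}$. Pulling back the universal data on $\mathcal{Z}$ from (\ref{unifamilyswamps}) gives a quotient $q\colon \mathbb{C}^{n} \otimes \mathscr{O}_X(-k) \twoheadrightarrow \F$ of uniform multi-rank $r$ with Hilbert polynomial $P$, and a nonzero morphism $(\F(k)^{\otimes a})^{\oplus b} \to \mathscr{O}_X(ak)$ which, after untwisting, is exactly a swamp $\phi\colon (\F^{\otimes a})^{\oplus b}\to \mathscr{O}_X$. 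The closedness condition $\overline{\overline{\varphi_Y}}|_{\mathcal{Z}} = 0$ is precisely the compatibility needed for $\phi$ to factor through $q^{\otimes a}$.

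Next I would analyse the GIT semistability under $\textrm{SL}_{M+1}\times\textrm{SL}_n$ with polarisation $\mathscr{O}(\alpha,\beta,\gamma)$. The two actions commute, and crucially $\textrm{SL}_n$ acts trivially on $\mathbf{P}(H_2)$ while $\textrm{SL}_{M+1}$ acts trivially on $\mathbb{C}^n$. For the $\textrm{SL}_{M+1}$-part, the two propositions of \cite{Pando} quoted above Proposition \ref{existenceswamps} show that for $\beta$ sufficiently large, a point of $\mathbf{P}(H_1)\times\mathbf{P}(H_2)\times\mathbf{P}(H_3)$ is $\textrm{SL}_{M+1}$-(semi)stable iff its image in $\mathbf{P}(H_2)$ is. Since by Gieseker $\textrm{H}_g$ lies inside the stable locus of $\mathbf{P}(H_2)$ and is closed in the semistable locus, the $\textrm{SL}_{M+1}$-(semi)stability condition is equivalent to the image of the point in $\mathbf{P}(H_2)$ belonging to $\textrm{H}_g$, that is, to $X$ being a $10$-canonical stable curve of genus $g$.

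Having reduced to points of $\mathcal{Z}$ lying over $\textrm{H}_g$, I would pass to the fibre over a closed point $h \in \textrm{H}_g$. That fibre is canonically identified with the parameter scheme $Z_{k,\mathscr{O}_X}(X)\subset \mathcal{H}\times\mathbf{P}(Z_1)$ of Paragraph \ref{parameterspace}, and the restriction of the ambient polarisation $(\alpha,\beta,\gamma)$ to this fibre only sees the weights $\alpha$ and $\gamma$ (since $\textrm{SL}_n$ fixes $\mathbf{P}(H_2)$); moreover the ratio $\alpha/\gamma$ was chosen at the end of Paragraph \ref{parameterswamps} so as to coincide with the fibre-wise ratio $n_1/n_2$ of Equation (\ref{polarization}). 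Theorem \ref{teoremon} then immediately translates $\textrm{SL}_n$-GIT (semi)stability on $Z_{k,\mathscr{O}_X}(X)$ into the condition that $(\F,\phi)$ is a $\delta$-(semi)stable swamp and that $f_q\colon \mathbb{C}^n \to H^0(X,\F(k))$ is an isomorphism. Combining this fibre-wise statement with the previous paragraph yields both directions of the claim.

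The main technical point I expect to have to be careful with is the compatibility between the two choices of parameters, namely that the lower bound on $\beta$ needed for the Pandharipande reduction does not interact with the ratio $\alpha:\gamma$ dictated by Theorem \ref{teoremon}. This decoupling works precisely because $\textrm{SL}_n$ acts trivially on $\mathbf{P}(H_2)$, so the weight $\beta$ never appears in the $\textrm{SL}_n$-Hilbert--Mumford functional; hence one may first fix $\alpha/\gamma$ by Equation (\ref{polarization}), then enlarge $\beta$ as much as required by \cite{Pando} without disturbing the fibre-wise analysis. A secondary small point is to check that the universal quotient and universal section restrict, at a closed point, to the data $(q,\phi)$ described in the statement, which is immediate from the construction of $\mathcal{Z}$ in Paragraph \ref{parameterswamps}.
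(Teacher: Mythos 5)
Your overall architecture matches the paper's: unpack a point of $\mathcal{Z}$ into $(X,(q,\phi))$, dispose of the $\textrm{SL}_{M+1}$-part using the fact that $\textrm{H}_{g}$ sits in the stable locus of $\mathbf{P}(H_2)$, and invoke Theorem \ref{teoremon} fibre-wise for the $\textrm{SL}_{n}$-part. But there is a genuine gap at the central step. You implicitly assert that GIT (semi)stability for the product $\textrm{SL}_{M+1}\times\textrm{SL}_{n}$ is equivalent to the conjunction of (semi)stability for the two factors separately, and your justification for the decoupling (``$\beta$ never appears in the $\textrm{SL}_{n}$-Hilbert--Mumford functional'') only treats one-parameter subgroups contained in a single factor. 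Semistability for a product group does not follow formally from semistability for each factor: one must bound the Hilbert--Mumford weight of every joint $1$-PS $\lambda=(\lambda_{1},\lambda_{2})$ with both components nontrivial, and for such $\lambda$ the weight on $\mathbf{P}(H_1)\times\mathbf{P}(H_3)$ does not split as the sum of the $\lambda_{1}$- and $\lambda_{2}$-weights, because both groups act nontrivially on $H_1$ and $H_3$ (through $\mathbb{C}^{n}$ and through $H^{0}(\mathbf{P}^{M},\mathscr{O}_{\mathbf{P}^{M}}(\cdot))$ respectively) and the maximum over the simultaneous weight decomposition is only subadditive. A toy example: $\mathbb{G}_{m}^{2}$ acting on $\mathbb{A}^{3}$ with joint weights $(1,0)$, $(-1,1)$, $(0,-1)$ leaves $v=(1,0,1)$ semistable for each factor but unstable for the product (test the $1$-PS $(1,-1)$).

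This missing step is exactly what the paper's proof supplies: it reduces the proposition to the equality $\mathcal{Z}^{s(s)}_{[\alpha,\beta,\gamma]}=\mathcal{Z}^{s''(s'')}_{[\alpha,\beta,\gamma]}$ (product-group versus $\textrm{SL}_{n}$-(semi)stability) and proves it by running the Hilbert--Mumford criterion for joint $1$-PS $(\lambda_{1},\lambda_{2})$, following \cite[Proposition 2.8.1]{Pando}. That argument uses that the $\mathbf{P}(H_2)$-component of a point of $\mathcal{Z}$ is \emph{strictly} $\textrm{SL}_{M+1}$-stable (Gieseker), so that after enlarging $\beta$ the $\lambda_{1}$-contribution is negative enough to dominate the cross terms coming from $H_1$ and $H_3$, leaving only the pure $\lambda_{2}$-weight, which is then handled by Theorem \ref{teoremon}. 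To repair your proof you would need to carry out this joint-weight estimate (or explicitly quote the Pandharipande argument for it) rather than treating the two group factors independently; the remaining ingredients of your proposal (identification of the universal data at a closed point, the choice $\alpha/\gamma$ matching Equation (\ref{polarization})) are correct and agree with the paper.
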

\begin{proof}
If we prove that there are equalities
\begin{equation}
\mathcal{Z}^{s(s)}_{[k,\alpha,\beta]}=\mathcal{Z}^{s''(s'')}_{[k,\alpha,\beta]},
\end{equation}
then the result will follow from Theorem \ref{teoremon} and the construction of $\mathcal{Z}^{ss}_{[\alpha,\beta,\gamma]}$. To prove the equalities we use the Hilbert-Mumford criterium. Consider the three representations
\begin{align*}
\xi^{\beta}:& \textrm{SL}_{M+1}\times \textrm{SL}_{n}\rightarrow \textrm{SL}_{M+1} \rightarrow \textrm{SL}(S^{\beta}(H_2))\\
\omega_{1}:& \textrm{SL}_{M+1}\times \textrm{SL}_{n}\rightarrow \textrm{SL}(S^{\alpha}(H_1)) \\
\omega_{2}:& \textrm{SL}_{M+1}\times \textrm{SL}_{n}\rightarrow \textrm{SL}(S^{\gamma}(H_3))
\end{align*}
Note that we obviously have the inclusion $\mathcal{Z}_{[\alpha,\beta,\gamma]}^{s(s)}\subset \mathcal{Z}_{[\alpha,\beta,\gamma]}^{s''(s'')}$. The representations $\omega_{1}$ and $\omega_{2}$ give us a representation
\begin{equation*}
\omega=\omega_{1}\otimes\omega_{2}: \textrm{SL}_{M+1}\times \textrm{SL}_{n}\rightarrow \textrm{SL}(J)
\end{equation*}
Let $\xi\in\mathcal{Z}_{[\alpha,\beta,\gamma]}^{s''(s'')}$ and define $(\xi_{1},\xi_{2}):=j_{s,l,k}(\xi)$, $j_{s,l,k}$ being the closed immersion given in Equation (\ref{impimmersion}). Let $\lambda:\mathbb{G}_{m}\rightarrow \textrm{SL}_{M+1}\times \textrm{SL}_{n}$ be a nontrivial 1-PS given by
\begin{align*}
\lambda_{1}:\mathbb{G}_{m}\rightarrow \textrm{SL}_{M+1}, \  \lambda_{2}:\mathbb{G}_{m}\rightarrow \textrm{SL}_{n}
\end{align*}
Now the result follows by a similar argument as the one given in \cite[Proposition 2.8.1]{Pando}. 
\end{proof}
 \begin{theorem}\label{Mainteo1}
The projective scheme $\mathcal{T}^{\delta\textrm{-(s)s}}_{P,g,a,b}$ is a coarse moduli space for the moduli functor $\emph{\textbf{Swamps}}^{\delta\textrm{-(s)s}}_{P,g,a,b}$.
 \end{theorem}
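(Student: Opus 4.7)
The plan is to verify the two defining properties of a coarse moduli space for $\textbf{Swamps}^{\delta\textrm{-(s)s}}_{P,g,a,b}$ by exploiting the universal family carried by $\mathcal{Z}^{ss}_{[\alpha,\beta,\gamma]}$ constructed in Section \ref{parameterswamps}, together with the good quotient $\mathcal{Z}^{ss}_{[\alpha,\beta,\gamma]} \to \mathcal{T}^{\delta\textrm{-(s)s}}_{P,g,a,b}$ provided by Proposition \ref{existenceswamps}. The natural transformation $\alpha^{(s)s}$ will be built via \'etale-local rigidification of families, and the universal property will follow from GIT descent applied to the universal family on $\mathcal{Z}^{ss}_{[\alpha,\beta,\gamma]}$.

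To construct $\alpha^{(s)s}$, I would start with a family $(X_T,(\F_T,\phi_T,\mathscr{N})) \in \textbf{Swamps}^{\delta\textrm{-(s)s}}_{P,g,a,b}(T)$, pass to a sufficiently fine \'etale cover $\{T_i \to T\}$, and choose on each $T_i$ three rigidifications: (i) an isomorphism $\pi_{T_i *}(\omega_{X_{T_i}/T_i}^{\otimes 10}) \simeq \mathbb{C}^{M+1}\otimes\mathscr{O}_{T_i}$, providing via the 10-canonical embedding a classifying morphism $T_i \to H_g$; (ii) an isomorphism $\pi_{T_i *}\F_{T_i}(k) \simeq \mathbb{C}^n\otimes\mathscr{O}_{T_i}$ for $k$ large enough, whose existence follows from Theorem \ref{bound10} combined with cohomology-and-base-change, yielding a surjection $\mathbb{C}^n\otimes\mathscr{O}_{X_{T_i}}(-k) \twoheadrightarrow \F_{T_i}$ and hence a morphism $T_i \to \mathbf{Q}^r_g(\mu,k,P)$; (iii) a trivialization of $\mathscr{N}|_{T_i}$, which together with $\phi_{T_i}$ produces a morphism into the projective bundle $\mathbf{P}((((\mathbb{C}^n)^{\otimes a})^{\oplus b})^{\vee}\otimes\mu_*\mathscr{O}_{U_g}(ak))$ over $H_g$. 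These combine into a morphism $T_i \to \mathcal{Z}$ which factors through $\mathcal{Z}^{ss}_{[\alpha,\beta,\gamma]}$ by the fibrewise criterion of Proposition \ref{closedpoints}. Any two choices of rigidifications differ by the action of $\textrm{SL}_{M+1}\times\textrm{SL}_n$, so the compositions $T_i \to \mathcal{Z}^{ss} \to \mathcal{T}^{\delta\textrm{-(s)s}}_{P,g,a,b}$ coincide on overlaps and glue, by faithfully flat descent, to a global morphism $T \to \mathcal{T}^{\delta\textrm{-(s)s}}_{P,g,a,b}$, functorial in $T$.

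For the universal property, given any scheme $\M$ and natural transformation $\alpha'\colon \textbf{Swamps}^{\delta\textrm{-(s)s}}_{P,g,a,b} \to h_\M$, I would evaluate $\alpha'$ on the canonical family on $\mathcal{Z}^{ss}_{[\alpha,\beta,\gamma]}$ determined by the pullback of the universal curve $U_g \to H_g$ together with the universal quotient $q_\mathcal{Z}$ and morphism $\varphi_\mathcal{Z}$ of Equation (\ref{unifamilyswamps}); fibrewise $\delta$-semistability on $\mathcal{Z}^{ss}_{[\alpha,\beta,\gamma]}$ is guaranteed by Theorem \ref{teoremon} and Proposition \ref{closedpoints}. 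This yields a morphism $f\colon \mathcal{Z}^{ss}_{[\alpha,\beta,\gamma]} \to \M$ which is $(\textrm{SL}_{M+1}\times\textrm{SL}_n)$-invariant, since the group acts on $\mathcal{Z}^{ss}_{[\alpha,\beta,\gamma]}$ by changing the rigidifications without altering the isomorphism class of the underlying triple $(X,(\F,\phi,\mathscr{N}))$. By the universal property of the good quotient, $f$ descends uniquely to a morphism $\varphi\colon \mathcal{T}^{\delta\textrm{-(s)s}}_{P,g,a,b} \to \M$, and the relation $\alpha' = h(\varphi)\circ\alpha^{(s)s}$ is verified by evaluating both sides on an arbitrary family and reducing to the \'etale-local construction of the previous paragraph. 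Uniqueness of $\varphi$ follows from the surjectivity of the quotient map.

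The main obstacle will be the equivariance bookkeeping: one must verify that the universal family on $\mathcal{Z}^{ss}_{[\alpha,\beta,\gamma]}$ is genuinely $(\textrm{SL}_{M+1}\times\textrm{SL}_n)$-equivariant modulo isomorphisms of swamps, accounting for scalar automorphisms of $\F$ (which simultaneously rescale the quotient $q$ and the morphism $\varphi_\mathcal{Z}$) and of $\mathscr{N}$, and showing that both are absorbed in the projectivization defining the projective bundle over $H_g$. This parallels the non-relative argument of \cite[Theorem 1.8]{sols} (reproduced in the present paper as Theorem \ref{A}) and the relative vector-bundle treatment in \cite[Theorem 2.8.2]{Pando}, the new ingredient being the simultaneous variation of the base curve along $H_g$; once the equivariance is in place, the descent from $\{T_i \to \mathcal{T}\}$ to a global $T \to \mathcal{T}^{\delta\textrm{-(s)s}}_{P,g,a,b}$ and the verification of the universal property are routine.
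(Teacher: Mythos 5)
Your proposal is correct and follows essentially the same route as the paper: construct the natural transformation by locally trivializing $\pi_{*}\omega_{X_T/T}^{\otimes 10}$, $\pi_{*}\F_T(k)$ and $\mathscr{N}$ to obtain local classifying maps into $\mathcal{Z}^{ss}_{[\alpha,\beta,\gamma]}$ that agree on overlaps up to the $\textrm{SL}_{M+1}\times\textrm{SL}_n$-action and hence glue after passing to the quotient, then obtain the universal property by evaluating $\alpha'$ on the universal family of Equation (\ref{unifamilyswamps}) and descending the resulting invariant morphism through the good quotient, with the bijection on closed points supplied by Proposition \ref{closedpoints}. The only cosmetic difference is that the paper uses a Zariski open cover (which suffices since the relevant pushforwards are locally free) rather than an \'etale one, and your explicit attention to the scalar-automorphism bookkeeping is a point the paper passes over silently.
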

\begin{proof}
Let $T$ be a scheme and $\eta\in\textrm{\textbf{Swamps}}^{\delta\textrm{-(s)s}}_{P,g,a,b}(T)$, which consists on
\begin{equation*}
\eta=\left\{  \begin{array}{l}  
\pi:X_{T}\rightarrow T \textrm{ a flat family of stable curves of genus }g \\
\textrm{with relative polarization }\mathscr{O}_{X_{T}}(1):=\mathscr{\omega}_{X_{T}/T}^{\otimes 10}\\
 \mathscr{F}_{T} \textrm{ a flat family of coherent sheaves of pure dimension one with Hilbert}\\
P(n)\textrm{ polynomial  and uniform multi-rank }r,\textrm{ and }\mathscr{N} \textrm{ an invertible sheaf on }T\\
\phi:( \mathscr{F}_{T}^{\otimes a})^{\oplus b}\rightarrow \pi^{*}\mathscr{N} \textrm{ a }\delta\textrm{-(semi)stable swamp}\\
\end{array} 
\right.
\end{equation*}
Let $k\geq N'$ be the natural number fixed for the construction of $\mathcal{T}^{\delta\textrm{-(s)s}}_{P,g,a,b}$ (se Section \ref{embedswamp}). Then $R^{1}\pi_{*} \mathscr{F}_{T}(k)=0$ and $\pi_{*} \mathscr{F}_{T}(k)$ is locally free of rank $n=P(k)$. On the other hand, $R^{1}\pi_{*}\mathscr{O}_{X_{T}}(1)=0$ and $\pi_{*}\mathscr{O}_{X_{T}}(1)$ is locally free of rank $N+1=10(2g-2)-g+1$. Let $\{U_{i}\}$ be an open cover of $T$ such that $(\pi_{*} \mathscr{F}_{T}(k))|_{U_{i}}\simeq \mathbb{C}^{n}\otimes \mathscr{O}_{U_{i}}$ and $(\pi_{*}\mathscr{O}_{X_{T}}(1))|_{U_{i}}\simeq \mathbb{C}^{N+1}\otimes \mathscr{O}_{U_{i}}$. This trivializations induce surjections
\begin{align}
\label{sur1}q_{2,W_i}:\mathbb{C}^{N+1}\otimes \mathscr{O}_{X_{T}}|_{W_{i}} & \twoheadrightarrow \mathscr{O}_{X_{T}}(1)|_{W_{i}},\\
\label{sur2}q_{1,W_i}:\mathbb{C}^{n}\otimes \mathscr{O}_{X_{T}}(-k)|_{W_{i}} & \twoheadrightarrow  \mathscr{F}_{T}|_{W_{i}},
\end{align}
where $W_{i}=\pi^{-1}(U_{i})$. Composing $\phi(ak)|_{W_{i}}$ with $(q_{1,W_i}(k)^{\otimes a})^{\oplus b}$ and taking the pushforward by $\pi$, we get a morphism
$$
\phi_{i}:((\mathbb{C}^{n})^{\otimes a})^{\oplus b}\otimes  \mathscr{O}_{T}|_{U_i}\rightarrow (\mathscr{N}|_{U_i})\otimes \pi_{*}\mathscr{O}_{X_{T}}(ak)|_{U_i}
$$
The first surjection, Equation (\ref{sur1}), embeds $q_{2,W_i}:W_i\hookrightarrow U_i \times \mathbf{P}^{M}$, while the second surjection, Equation (\ref{sur2}), defines a map $q_{1,W_i}:U_i \rightarrow \mathbf{Q}^{r}_{g}(\mu,k,P)$. Finally, the morphism $\phi_i$ defines a map $\phi_{i}:U_{i}\rightarrow \mathbf{P}((((\mathbb{C}^{n})^{\otimes a})^{\oplus b})^{\vee}\otimes\mu_{*}\mathscr{O}_{U_g}(ak))$ as well. Therefore, $q_{1,W_i}$ and $\phi_i$ define a map:
$$
\psi_i: U_i \rightarrow \mathbf{Q}^{r}_{g}(\mu,k,P)\times \mathbf{P}((((\mathbb{C}^{n})^{\otimes a})^{\oplus b})^{\vee}\otimes\mu_{*}\mathscr{O}_{U_g}(ak))
$$
Consider two open subsets $U_{i},U_{j}$, and let $U_{ij}=U_{i}\cap U_{j}$ be the intersection. Then, $\phi_i$ and $\phi_j$ define maps
$$
\xymatrix{
U_{ij} \ar@/^{5mm}/[rr]^{\phi_{i}} \ar@/_{5mm}/[rr]_{\phi_{j}} & & \mathbf{Q}^{r}_{g}(\mu,k,P)\times \mathbf{P}((((\mathbb{C}^{n})^{\otimes a})^{\oplus b})^{\vee}\otimes\mu_{*}\mathscr{O}_{U_g}(ak))
}
$$
which differ by a $U_{ij}$-valued point of the group scheme $\textrm{SL}_{n}\times\textrm{SL}_{M+1}$ and that take values in $\mathcal{Z}^{ss}_{[\alpha,\beta,\gamma]}$ because $\eta$ is $\delta$-semistable. Therefore, there is a well-defined morphism $T\rightarrow \mathcal{T}^{\delta\textrm{-(s)s}}_{P,g,a,b}$. This shows the existence of a natural transformation
$$
\Psi:\textrm{\textbf{Swamps}}^{\delta\textrm{-(s)s}}_{P,g,a,b}\longrightarrow \textrm{Hom}(-,\mathcal{T}^{\delta\textrm{-(s)s}}_{P,g,a,b}).
$$
Let $\mathcal{M}$ be  a scheme and suppose that there exists a natural transformation 
$$
\Psi':\textrm{\textbf{Swamps}}^{\delta\textrm{-(s)s}}_{P,g,a,b}\longrightarrow \textrm{Hom}(-,\mathcal{M})
$$
There is a canonical member $\phi_{univ}\in\textrm{\textbf{Swamps}}^{\delta\textrm{-(s)s}}_{P,g,a,b}(\mathcal{Z}^{\textrm{(s)s}}_{[\alpha,\beta,\gamma]})$ corresponding to the universal family (see Equation (\ref{unifamilyswamps})). The morphism 
$$\Psi'(\phi_{univ}): \mathcal{Z}^{\textrm{(s)s}}_{[\alpha,\beta,\gamma]}\longrightarrow\mathcal{M}$$ 
is $\textrm{SL}_{n}\times\textrm{SL}_{M+1}$-invariant, so it descends to a morphism 
$$\overline{\Psi'(\phi_{univ})}: \mathcal{T}^{\delta\textrm{-(s)s}}_{P,g,a,b}\longrightarrow \mathcal{M}$$
which defines a map $\Phi'':\textrm{Hom}(-,\mathcal{T}^{\delta\textrm{-(s)s}}_{P,g,a,b})\rightarrow \textrm{Hom}(-,\mathcal{M})$. Clearly, the triangle
$$
\xymatrix{
&\textrm{Hom}(-,\mathcal{T}^{\delta\textrm{-(s)s}}_{P,g,a,b})\ar[dd]^{\Psi''}\\
\textrm{\textbf{Swamps}}^{\delta\textrm{-(s)s}}_{P,g,a,b}\ar[ur]^{\Psi}\ar[dr]_{\Psi'} &\\
& \textrm{Hom}(-,\mathcal{M})
}
$$
commutes. Finally, the equality 
$
\textrm{\textbf{Swamps}}^{\delta\textrm{-(s)s}}_{P,g,a,b}(\mathbb{C})= \textrm{Hom}(\textrm{Spec}\mathbb{C},\mathcal{T}^{\delta\textrm{-(s)s}}_{P,g,a,b})
$ follows from Proposition \ref{closedpoints}.
\end{proof}

\section{A compactification of the universal moduli space of principal bundles}
\label{sectionmain}

Let $G$ be a semisimple linear algebraic algebraic group and $\rho:G\hookrightarrow \textrm{SL}(V)$ be a finite and faithful representation of dimension $r$.
Given a natural number $g\geq 2$, a rational number $\delta\in\mathbb{Q}_{>0}$ and a polynomial with integral coefficients of degree one, $P$, we will show that there is a projective coarse moduli space, $\textrm{SPB}(\rho)^{\delta\textrm{-(s)s}}_{P,g}$, for the moduli functor
\begin{equation*}
\textbf{SPB}(\rho)^{\delta\textrm{-(s)s}}_{P,g}(T)=\left\{
\begin{array}{l}
\textrm{isomorphism classes of pairs }(X_{T},( \mathscr{F}_{T},\tau_{T}))\\
\textrm{where }X_{T} \textrm{ is a semistable curve of genus }g \\
\textrm{over }T \textrm{ and } ( \mathscr{F}_{T},\tau_{T}) \textrm{ is a } \delta\textrm{-(semi)stable singular}\\
\textrm{principal }G\textrm{-bundle of uniform multi-rank }r \\
\textrm{over } T \textrm{ and with Hilbert polynomial } P
\end{array}\right\}.
\end{equation*}
together with a morphism  $\Theta_{sb}:\textrm{SPB}(\rho)^{\delta\textrm{-(s)s}}_{P,g}\rightarrow\overline{\textrm{M}}_{g}$. We will assume that each curve brings with it as polarization the very ample invertible sheaf given by $\mathscr{\omega}_{X}^{\otimes 10}$, which has degree $h=10(2g-2)$. In addition, we will show that if $P(n)=(rh)n+r(1-g)$ and $\delta$ is very large, then $\Theta_{sb}^{-1}([X])=\textrm{M}_{X}^{(s)s}(G)/\text{Aut}(X)$ for every smooth projective curve of genus $g$.

\subsection{The fiber-wise problem}

Let $X$ be a stable curve of genus $g$. A singular principal $G$-bundle over $X$ is a pair $(\mathscr{F},\tau)$ where $\mathscr{F}$ is a coherent sheaf of pure dimension one of rank $r$ and $\tau$ is a morphism of $\mathscr{O}_{X}$-algebras,
$\tau:S^{\bullet}(V\otimes \mathscr{F})^{G}\rightarrow\mathscr{O}_{X},$
which is not just the projection onto the zero degree component.

Consider a singular principal $G$-bundle on $X$,
$\tau:S^{\bullet}(V\otimes \mathscr{F})^{G}\rightarrow\mathscr{O}_{X}$.
We can fix $s\in\mathbb{N}$ such that $S^{\bullet}(V\otimes \mathscr{F})^{G}$ is generated by the submodule $\bigoplus_{i=0}^{s}S^{i}(V\otimes  \mathscr{F})^{G}$. Let $\underline{d}\in\mathbb{N}^{s}$ be such that $\sum id_{i}=s!$. Then we have:
\begin{equation}\label{asten}
\bigotimes_{i=1}^{s}(V\otimes \mathscr{F})^{\otimes id_{i}} \rightarrow\bigotimes_{i=1}^{s}S^{d_{i}}(S^{i}(V\otimes \mathscr{F}))\rightarrow \bigotimes_{i=1}^{s}S^{d_{i}}(S^{i}(V\otimes \mathscr{F}))^{G}\rightarrow\Ox
\end{equation}
Adding up these morphisms as $\underline{d}\in\mathbb{N}$ varies we find a swamp (see \cite{Alexander2})
\begin{equation}\label{associatedtensor}
\textrm{Swamp}(\tau):=\phi_{\tau}:((V\otimes \mathscr{F})^{\otimes s!})^{\oplus N}\rightarrow\Ox.
\end{equation}
Define $a:=s!$ and $b=N$. Then, by \cite{AMC}, there is an $s\in\mathbb{N}$ large enough such that the map
\begin{equation}\label{injective}
\left\{   \begin{array}{l}  
\textrm{isomorphism classes} \\ \textrm{of singular principal} \\ \textrm{G-bundles}  \end{array} \right\} \rightarrow 
\left\{   \begin{array}{l}  
\textrm{isomorphism classes} \\ \textrm{of swamps} \\ \textrm{of type }(a, b, \Ox)   \end{array} \right\} 
\end{equation}
is injective and depends only on the numerical input data, and not on the base curve.
 \begin{definition}\label{semistablebundle}
Let $\delta\in\mathbb{Q}_{>0}$. A singular principal $G$-bundle is said to be $\delta$-semi(stable) if its associated swamp is $\delta$-semi(stable)
 \end{definition}

Therefore, there is a natural transformation from the functor
$$
\textbf{SPB}(\rho)_{P,X}^{\delta\textrm{-(s)s}}(S)=\left\{   \begin{array}{l}\textrm{isomorphism classes of}\\ \textrm{families of } \delta\textrm{-(semi)stable} \textrm{ singular} \\ \textrm{principal G-bundles on }X \textrm{ parametrized }\\ \textrm{by S of uniform multi-rank } r\\ \textrm{and with Hilbert polynomial }P \end{array}  \right\}.
$$
to the functor 
\begin{equation*}
\textbf{Swamps}^{\delta\textrm{-(s)s}}_{P,\mathscr{O}_{X},a,b}(T)=\left\{
\begin{array}{l}
\textrm{isomorphism classes of }\delta\textrm{-(semi)stable swamps}\\
(V\otimes  \mathscr{F}_{T},\phi_{T},N)\textrm{ of uniform multi-rank }r \\
\textrm{and with Hilbert polynomial } P
\end{array}\right\}.
\end{equation*}

This allows to prove, following standard arguments (see \cite{AMC}), the next theorem.

 \begin{theorem}\label{B}\emph{\cite[Theorem 6.3]{AMC}}
There is a projective scheme $\emph{SPB}(\rho)_{P,X}^{\delta\text{-(s)s}}$ and an open subscheme $\emph{SPB}(\rho)_{P,X}^{\delta\text{-s}}\subset \textrm{\emph{SPB}}(\rho)_{P,X}^{\delta\text{-(s)s}}$ together with a natural transformation
\begin{equation*}
\alpha^{(s)s}\colon \textbf{\emph{SPB}}(\rho)_{P,X}^{\delta\textrm{-(s)s}}\rightarrow h_{\emph{SPB}(\rho)_{P,X}^{\delta\text{-(s)s}}}
\end{equation*}
with the following properties:

1) For every scheme $\N$ and every natural transformation $\alpha'\colon \emph{\textbf{SPB}}(\rho)_{P,X}^{\delta\textrm{-(s)s}}\rightarrow h_{\N}$, there exists a unique morphism $\varphi\colon \emph{SPB}(\rho)_{P,X}^{\delta\textrm{-(s)s}}\rightarrow\N$ with $\alpha'=h(\varphi)\circ\alpha^{(s)s}$.

2) The scheme $\emph{SPB}_{P,X}(\rho)^{\delta\textrm{-s}}$ is a coarse projective moduli space for the functor $\emph{\textbf{SPB}}_{P,X}(\rho)^{\delta\textrm{-s}}$
 \end{theorem}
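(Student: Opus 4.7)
My plan is to deduce Theorem \ref{B} from Theorem \ref{A} by realising the moduli space of singular principal $G$-bundles as a GIT quotient of a closed invariant subscheme of the rigidified parameter space $Z'_{k,\mathscr{O}_{X}}(X)$ built for the swamp functor in Section \ref{parameterspace}. The bridge is the construction recalled in Equation (\ref{associatedtensor}): one fixes $a=s!$ and $b=N$ large enough so that the assignment $\tau\mapsto\textrm{Swamp}(\tau)$ yields, by Equation (\ref{injective}), an injection of functors
\begin{equation*}
\Lambda\colon\textbf{SPB}(\rho)_{P}^{\delta\text{-(s)s}}\longhookrightarrow\textbf{Swamps}^{\delta\text{-(s)s}}_{P,\mathscr{O}_{X},a,b},
\end{equation*}
which is compatible with $\delta$-semistability by Definition \ref{semistablebundle}.

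On the rigidified parameter space $Z'_{k,\mathscr{O}_{X}}(X)$, the universal quotient $q_{Z}\colon V\otimes\mathscr{O}_{X\times Z'}(-k)\twoheadrightarrow\mathcal{E}$ and the universal swamp $\phi_{Z}\colon(\mathcal{E}^{\otimes a})^{\oplus b}\to\mathscr{O}_{X\times Z'}$ allow me to formulate two closed conditions: (i) each component of $\phi_{Z}$ vanishes on the kernel of the canonical projection $(V\otimes\mathcal{E})^{\otimes a}\twoheadrightarrow\bigotimes_{i=1}^{s}S^{d_{i}}(S^{i}(V\otimes\mathcal{E}))^{G}$ corresponding to its multi-index $\underline{d}$, so that it factors through $G$-invariants as in Equation (\ref{asten}); and (ii) the resulting collection of invariant morphisms assembles into a morphism of $\mathscr{O}_{X\times Z'}$-algebras $S^{\bullet}(V\otimes\mathcal{E})^{G}\to\mathscr{O}_{X\times Z'}$. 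Both conditions are closed: (i) is the vanishing of the composition of universal homomorphisms, and (ii) amounts to the vanishing of the universal morphism expressing the compatibility of the $\widetilde{\tau}_{\underline{d}}$'s with the multiplication on $S^{\bullet}(V\otimes\mathcal{E})^{G}$. Their common zero locus defines a closed $\textrm{SL}(V)$-invariant subscheme $Z_{bund}\subset Z'_{k,\mathscr{O}_{X}}(X)$.

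Setting $Z_{bund}^{s(s)}:=Z_{bund}\cap Z^{s(s)}_{k,\mathscr{O}_{X}}(X)$, where the right-hand factor is the GIT (semi)stable locus identified in Theorem \ref{teoremon} with the locus of $\delta$-(semi)stable swamps, I get a closed invariant subscheme of a semistable locus. Standard GIT then yields the projective quotient
\begin{equation*}
\textrm{SPB}(\rho)_{P}^{\delta\text{-(s)s}}:=Z_{bund}^{s(s)}\slas\textrm{SL}(V),
\end{equation*}
whose closed points, thanks to Theorem \ref{teoremon}, the injectivity of (\ref{injective}) and Definition \ref{semistablebundle}, correspond bijectively to isomorphism classes of $\delta$-(semi)stable singular principal $G$-bundles with Hilbert polynomial $P$. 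The universal property (1) follows the same pattern as in the proof of Theorem \ref{A}: given $\alpha'\colon\textbf{SPB}(\rho)_{P}^{\delta\text{-(s)s}}\to h_{\mathcal{N}}$, one obtains local classifying morphisms from a covering of any test scheme $T$ into $Z_{bund}^{s(s)}$ via local trivialisations of $\pi_{T*}\mathcal{F}_{T}(k)$, and descends the glued $\textrm{SL}(V)$-equivariant map through the quotient by $\textrm{SL}(V)$-invariance of $\alpha'$.

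The main obstacle is the closedness of $Z_{bund}$ in $Z'_{k,\mathscr{O}_{X}}(X)$, which requires globalising, in families over a possibly non-reduced base, the characterisation of those swamps arising from an honest morphism of invariant algebras. This is precisely where the uniform choice of $(a,b)$ provided by \cite{AMC} is needed: it guarantees that finitely many universal morphisms of coherent sheaves are enough to cut out the subscheme. Once this closedness is established and the identification of closed points is verified, the remainder of the argument is a formal repetition of the construction carried out for Theorem \ref{A}.
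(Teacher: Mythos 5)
The paper does not give an independent proof here: Theorem \ref{B} is quoted from \cite[Theorem 6.3]{AMC}, and the construction it rests on (mirrored in the relative setting of Section \ref{sectionmain}) differs from yours at one essential point. There one does not attempt to cut the singular principal $G$-bundles out of the swamp parameter space. Instead, the homomorphisms $\tau_{i}\colon S^{i}(V\otimes\F)^{G}\rightarrow\mathscr{O}_{X}$, $i=1,\dots,s$, are parametrized directly by the points of an affine scheme of homomorphisms (the fiber-wise analogue of $\mathbf{H}(V,s,k)$); the condition that the universal morphism descends to a morphism of algebras $S^{\bullet}(V\otimes\overline{\F})^{G}\rightarrow\mathscr{O}$ is imposed \emph{there}, where it is a genuine vanishing condition (the diagram defining $\mathbf{D}_{g}$); and GIT-semistability is then transported through the injective, proper, equivariant morphism $\mathrm{Swamp}\colon\mathbf{D}\rightarrow\mathcal{Z}$, after first dividing out the extra $\mathbb{C}^{*}\subset\mathrm{GL}_{n}$ acting on the space of homomorphisms. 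The quotient is taken on $\overline{\mathbf{D}}$ itself with the pulled-back linearization, not on its image in $\mathcal{Z}$.

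Your condition (i) is unproblematic: asking each component of the universal swamp to kill the kernel of $(V\otimes\mathcal{E})^{\otimes a}\twoheadrightarrow\bigotimes_{i}S^{d_{i}}(S^{i}(V\otimes\mathcal{E}))^{G}$ is closed. The gap is condition (ii). Given the factored morphisms $\widetilde{\phi}_{\underline{d}}\colon\bigotimes_{i}S^{d_{i}}(S^{i}(V\otimes\mathcal{E}))^{G}\rightarrow\mathscr{O}$, the requirement that they ``assemble into'' a morphism of algebras asserts the \emph{existence} of morphisms $\tau_{i}$ whose products $\prod_{i}\tau_{i}^{d_{i}}$ recover the $\widetilde{\phi}_{\underline{d}}$; this is an existential statement (an extraction of roots from products), not the vanishing of a universal morphism, and you do not say how to express it as one. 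The uniform choice of $(a,b)$ from \cite{AMC} gives injectivity of $\tau\mapsto\phi_{\tau}$ on isomorphism classes, but that does not make the image of the Swamp map a closed subscheme of $Z'_{k,\mathscr{O}_{X}}(X)$ carrying the correct scheme structure for the rigidified functor --- indeed the Swamp morphism is only injective and proper, not a priori a closed immersion, which is why the paper keeps $\mathbf{D}$ as a separate parameter space rather than replacing it by its image in $\mathcal{Z}$. You flag this closedness yourself as ``the main obstacle'' but do not resolve it; that is precisely the step the cited construction is designed to avoid. The remainder of your argument (GIT quotient of the semistable locus, identification of closed points, the coarse moduli property) would go through once the parameter space is set up as above.
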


\subsection{The relative problem}

Consider the Quot scheme
$
\mathbf{Q}^{r}_{g}(\mu,k,P)=\textrm{Quot}^{P,r}_{\mathbb{C}^{n}\otimes\mathscr{O}_{U_{g}}(-k)/U_{g}/ \mathbf{H}_g},
$
that was considered in the case of swamps, and the affine $\textrm{H}_{g}$-scheme defined by
\begin{equation*}
\mathbf{H}(V,s,k):=\bigoplus_{i=1}^{s} \textrm{\underline{Hom}}_{\textrm{H}_{g}}(S^{i}(V\otimes \mathbb{C}^{n})\otimes\mathscr{O}_{\textrm{H}_{g}},\mu_{*}\mathscr{O}_{U_{g}}(ik))\overset{\kappa}{\longrightarrow} \textrm{H}_{g}.
\end{equation*}
By \cite{AMC} and the results of Section \ref{sectionuniform}, we know that for $s,k$ large enough, every $\delta$-semistable singular principal $G$-bundle over a semistable curve of genus $g$ determines a point in $\mathbf{Q}^{r}_{g}(\mu,k,P)\times_{\textrm{H}_{g}}\mathbf{H}(V,s,k)$. Denote by $\overline{\kappa}$ and $\overline{\mu}$ the projections 
\begin{equation}
\begin{split}
(\mathbf{Q}^{r}_{g}(\mu,k,P)\times_{\textrm{H}_{g}}\mathbf{H}(V,s,k))\times_{\textrm{H}_{g}}U_{g}&\longrightarrow U_{g},\\
(\mathbf{Q}^{r}_{g}(\mu,k,P)\times_{\textrm{H}_{g}}\mathbf{H}(V,s,k))\times_{\textrm{H}_{g}}U_{g}&\longrightarrow \textrm{H}_{g},
\end{split}
\end{equation}
respectively. The goal now is to put a scheme structure on the locus given by the points $([q],[k])\in \mathbf{Q}^{r}_{g}(\mu,k,P)\times_{\textrm{H}_{g}}\mathbf{H}(V,s,k)$ that comes from a morphism of algebras
$
S^{\bullet}(V\otimes \mathscr{F})^{G}\rightarrow\Ox.
$
For the sake of clarity, let us denote $\mathbf{E}_{g}:=(\mathbf{Q}^{r}_{g}(\mu,k,P)\times_{\textrm{H}_{g}}\mathbf{H}(V,s,k))\times_{\textrm{H}_{g}}U_{g}$. On $\mathbf{E}_{g}$, there are universal morphisms
\begin{equation*}
\varphi'^{i}\colon S^{i}(V\otimes W)\otimes\mathscr{O}_{\mathbf{E}_{g}}\rightarrow \overline{\mu}^{*}\overline{\mu}_{*}(\overline{\kappa}^{*}\mathscr{O}_{U_g}(ik))
\end{equation*}
that composed with the evaluation maps $\overline{\mu}^{*}\overline{\mu}_{*}(\overline{\kappa}^{*}\mathscr{O}_{U_g}(ik))\twoheadrightarrow \overline{\kappa}^{*}\mathscr{O}_{U_g}(ik)$ lead to 
\begin{equation*}
S^{i}(V\otimes W)\otimes\mathscr{O}_{\mathbf{E}_{g}}\rightarrow \overline{\kappa}^{*}\mathscr{O}_{U_g}(ik)=\mathscr{O}_{\mathbf{E}_{g}}(ik)
\end{equation*}
Summing up these morphisms over all $i$ we find
$$\varphi_{\mathbf{E}_{g}}\colon \mathscr{V}_{\mathbf{E}_{g}}\colon =\bigoplus_{i=1}^{s}S^{i}(V\otimes W\otimes\mathscr{O}_{\mathbf{E}_{g}}(-k))\rightarrow\mathscr{O}_ {\mathbf{E}_{g}}.$$
Now $\varphi$ gives a morphism
$\tau'_{\mathbf{E}_{g}}\colon S^{\bullet}(\mathscr{V}_{\mathbf{E}_{g}})\rightarrow \mathscr{O}_{\mathbf{E}_{g}}.$
Consider now the universal quotient, $q_{U_g}:\mathbb{C}^{n}\otimes\phi^{*}\mathscr{O}_{U_{g}}(-k)\twoheadrightarrow \mathscr{F}$, on $\mathbf{Q}^{r}_{g}(\mu,k,P)\times_{\textrm{H}_{g}}U_{g}$. Pulling it back to $\mathbf{E}_{g}$ we get a quotient $q_{\mathbf{E}_{g}}:\mathbb{C}^{n}\otimes\mathscr{O}_{\mathbf{E}_{g}}(-k)\twoheadrightarrow\overline{ \mathscr{F}}$, and therefore a chain of surjections
$$
\xymatrix{
S^{\bullet}(V\otimes \mathbb{C}^{n}\otimes\mathscr{O}_{\mathbf{E}_{g}}(-k))\ar@{->>}[rr]^{S^{\bullet}(1\otimes q_{\mathbf{E}_{g}})} & & S^{\bullet}(V\otimes \overline{ \mathscr{F}})\ar@{->>}[d]^-{\textrm{Reynolds}} \\
S^{\bullet}\mathscr{V}_{\mathbf{E}_{g}}\ar@{->>}[u] & & S^{\bullet}(V\otimes \overline{ \mathscr{F}})^{G}
}
$$
Let us denote by $\beta$ the composition of these morphisms and consider the diagram
$$
\xymatrix{
0\ar[r] & \textrm{Ker}(\beta)\ar@{^(->}[r]\ar@{-->}[rd]^{\tau'_{\mathbf{E}_{g}}} & S^{\bullet}\mathscr{V}_{\mathbf{E}_{g}}\ar[r]^{\hspace{-1.2cm}\beta} \ar[d]^{\tau'_{\mathbf{E}_{g}}} & S^{\bullet}(V\otimes \overline{ \mathscr{F}})^{G} \ar[r] & 0 \\
 & & \mathscr{O}_{\mathbf{E}_{g}} & &
}
$$
As in the fiber-wise problem, there exists a closed subscheme 
$$\mathbf{D}_{g}\subset \mathbf{Q}^{r}_{g}(\mu,k,P)\times_{\textrm{H}_{g}}\mathbf{H}(V,s,k)$$ 
over which the morphism $\tau'_{\mathbf{E}_{g}}:S^{\bullet}\mathscr{V}_{\mathbf{E}_{g}}\rightarrow\mathscr{O}_{\mathbf{E}_{g}}$ lifts to a morphism of algebras
$S^{\bullet}(V\otimes \overline{ \mathscr{F}}_{|_{\mathbf{D}_{g}}})^{G}\rightarrow \mathscr{O}_{\mathbf{D}_{g}}$. There are two groups acting on $\mathbf{D}_{g}$. The action of the group $\textrm{SL}_{M+1}$ on $\textrm{H}_{g}$ lifts to an action on $\mathbf{D}_{g}$, while the group $\textrm{GL}_{n}$ is acting on both, $\mathbf{Q}^{r}_{g}(\mu,k,P)$ and $\mathbf{H}(V,s,k)$. The group $\textrm{GL}_{n}$ leaves invariant $\mathbf{D}_{g}$, so $\textrm{GL}_{n}$ is acting on $\mathbf{D}_{g}$ as well. Again, as in the fiber-wise problem, we can study the quotient by studying separately the actions of $\mathbb{C}^{*}$ and $\textrm{SL}_{n}$ (see \cite[Section 4.2]{Alexander2}).

Let $\mathcal{Z}$ be the parameter space for swamps $\phi:((V\otimes \mathscr{F})^{\otimes a})^{\oplus b}\rightarrow \mathscr{O}_{X}$ (see Subsection \ref{parameterswamps}). The injective map defined in Equation (\ref{injective}) shows that there is a well-defined morphism of $\textrm{H}_{g}$-schemes
$$
\textrm{Swamp}: \mathbf{D}_{g}\longrightarrow \mathcal{Z}
$$
which is $\mathbb{C}^{*}$-invariant and $\textrm{SL}_{n}$-equivariant, injective and proper. Thus, it induces a morphism
$$
\overline{\textrm{Swamp}}: \overline{\mathbf{D}_{g}}:=\mathbf{D}_{g}\slas\mathbb{C}^{*}\longrightarrow \mathcal{Z}
$$
which is $\textrm{SL}_{n}$-equivariant, injective and proper. Finally, by Definition \ref{semistablebundle} and Theorem \ref{existenceswamps} we conclude that $\textrm{SPB}(\rho)^{\delta\textrm{-(s)s}}_{P,g}:=\overline{\mathbf{D}_{g}}\slas(\textrm{SL}_{M+1}\times\textrm{SL}_{n})$ exists and is projective.
 \begin{theorem}\label{Mainteo2}
The projective scheme  $\emph{\textrm{SPB}}(\rho)^{\delta\textrm{-(s)s}}_{P,g}$ is a coarse moduli space for the moduli functor  $\emph{\textbf{SPB}}(\rho)^{\delta\textrm{-(s)s}}_{P,g}$.
 \end{theorem}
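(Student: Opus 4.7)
The plan is to mirror the proof of Theorem \ref{Mainteo1} step by step, using the injective, proper, $\textrm{SL}_n$-equivariant morphism $\overline{\textrm{Swamp}}: \overline{\mathbf{D}_{g}} \to \mathcal{Z}$ together with the universal family carried by $\mathbf{D}_g$. More precisely, I would first construct a natural transformation $\Psi\colon \textbf{SPB}(\rho)^{\delta\textrm{-(s)s}}_{P,g}\to h_{\textrm{SPB}(\rho)^{\delta\textrm{-(s)s}}_{P,g}}$, then establish its universal property, and finally identify closed points.

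For the first step, given a family $\eta=(X_T,(\F_T,\tau_T))\in \textbf{SPB}(\rho)^{\delta\textrm{-(s)s}}_{P,g}(T)$, I would fix the natural number $k\geq N'$ used in the construction so that $\pi_{*}\F_{T}(k)$ and $\pi_{*}\Oo_{X_T}(1)$ are locally free of the expected ranks and their first direct images vanish. Trivialising these push-forwards on an open cover $\{U_i\}$ of $T$ yields, as in the proof of Theorem \ref{Mainteo1}, surjections $q_{1,W_i}$ and $q_{2,W_i}$ embedding $X_T|_{W_i}\hookrightarrow U_i\times \Pp^M$ and representing $\F_T|_{W_i}$ as a quotient. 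The $\Oo_{X_T}$-algebra morphism $S^{\bullet}(V\otimes\F_T)^{G}\to \Oo_{X_T}$ produces, after pushing forward the degree-$i$ components twisted by $\Oo_{X_T}(ik)$ and composing with $S^{i}(1\otimes q_{1,W_i}(k))$, a collection of morphisms
\begin{equation*}
\kappa_{i}\colon S^{i}(V\otimes \mathbb{C}^{n})\otimes \Oo_{U_i}\longrightarrow \pi_{*}\Oo_{X_T}(ik)|_{U_i},
\end{equation*}
for $1\leq i\leq s$, i.e.\ a map $U_i\to \mathbf{H}(V,s,k)$. Together with the map $U_i\to \mathbf{Q}^{r}_{g}(\mu,k,P)$ defined by $q_{1,W_i}$, this gives a morphism $\psi_i\colon U_i\to \mathbf{D}_g$ (the lifting to the algebra map defining $\tau_T$ shows $\psi_i$ lands in the closed subscheme $\mathbf{D}_g$). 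On overlaps the maps differ by a $U_{ij}$-valued point of $\textrm{SL}_{M+1}\times \textrm{GL}_{n}$, and the $\mathbb{C}^{*}$-ambiguity coming from the choice of trivialisations is killed after passing to $\overline{\mathbf{D}_g}=\mathbf{D}_g\slas \mathbb{C}^{*}$. The $\delta$-semistability of $(\F_T,\tau_T)$ (hence of the associated swamp, via Definition \ref{semistablebundle}) implies the image lies in the semistable locus, so the $\psi_i$ descend to a well-defined morphism $T\to \textrm{SPB}(\rho)^{\delta\textrm{-(s)s}}_{P,g}$.

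For the universal property, I would proceed exactly as in the swamp case: $\mathbf{D}_g$ carries a tautological family $\eta_{\textrm{univ}}\in \textbf{SPB}(\rho)^{\delta\textrm{-(s)s}}_{P,g}(\overline{\mathbf{D}_g}^{ss})$ coming from the universal quotient and the universal algebra morphism constructed just above the statement. Given any scheme $\mathcal{N}$ and any natural transformation $\Psi'\colon \textbf{SPB}(\rho)^{\delta\textrm{-(s)s}}_{P,g}\to h_{\mathcal{N}}$, the morphism $\Psi'(\eta_{\textrm{univ}})\colon \overline{\mathbf{D}_g}^{ss}\to \mathcal{N}$ is $(\textrm{SL}_{M+1}\times\textrm{SL}_n)$-invariant because two local trivialisations above produce isomorphic families, so by the universal property of the good quotient it factors through $\textrm{SPB}(\rho)^{\delta\textrm{-(s)s}}_{P,g}$, giving the required $\varphi\colon \textrm{SPB}(\rho)^{\delta\textrm{-(s)s}}_{P,g}\to\mathcal{N}$ with $\Psi'=h(\varphi)\circ\Psi$, and this factorisation is unique because $\overline{\mathbf{D}_g}^{ss}\to \textrm{SPB}(\rho)^{\delta\textrm{-(s)s}}_{P,g}$ is surjective.

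Finally, closed points must be identified with isomorphism classes of $\delta$-semistable singular principal $G$-bundles. Here I would use that $\overline{\textrm{Swamp}}$ is injective and proper (hence a closed immersion onto its image after the $\textrm{SL}_n$ quotient), together with Proposition \ref{closedpoints} for swamps, to conclude that closed points of $\overline{\mathbf{D}_g}\slas(\textrm{SL}_{M+1}\times \textrm{SL}_n)$ correspond bijectively to pairs $(X,(\F,\tau))$ with $X$ a $10$-canonical stable curve of genus $g$ and $(\F,\tau)$ a $\delta$-(semi)stable singular principal $G$-bundle with Hilbert polynomial $P$, modulo the action of $\textrm{Aut}(X)$. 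The hardest part is precisely this closed-point identification: one must combine the injectivity of the Swamp-assignment from Equation (\ref{injective}) (which depends on the uniform-boundedness results of Section \ref{sectionuniform} to work along the whole of $\overline{\textrm{M}}_g$) with the compatibility of $S$-equivalence for swamps and for singular principal bundles, using the same argument as in \cite[Section 4.2]{Alexander2} and \cite[Theorem 6.3]{AMC}, so that $\overline{\textrm{Swamp}}$ descends to an injection on closed points of the GIT quotients. Once this bijection is established, combining it with the triangle above shows that $\textrm{SPB}(\rho)^{\delta\textrm{-(s)s}}_{P,g}$ corepresents $\textbf{SPB}(\rho)^{\delta\textrm{-(s)s}}_{P,g}$ and induces a bijection on $\mathbb{C}$-points, which is the definition of a coarse moduli space.
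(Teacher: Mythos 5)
Your proposal is correct and follows essentially the same route as the paper: the paper's own proof is a one-line appeal to ``standard arguments'' from Schmitt's work together with Theorem \ref{Mainteo1}, and your write-up is precisely a faithful unwinding of those arguments (local trivialisations giving a map to $\mathbf{D}_g$, descent through the $\mathbb{C}^{*}$ and GIT quotients, the universal family for the corepresentability, and Proposition \ref{closedpoints} plus the injective proper morphism $\overline{\textrm{Swamp}}$ for the closed points). The only minor quibble is that ``injective and proper'' does not by itself make $\overline{\textrm{Swamp}}$ a closed immersion, but this is not needed: it suffices for transporting the good quotient from $\mathcal{Z}^{ss}$ to $\overline{\mathbf{D}_g}^{ss}$ and for the bijection on closed points, which is all your argument actually uses.
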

\begin{proof}
Follows from standard arguments as those given in \cite[Proposition 4.1.1, Proposition 4.2.1, Theorem 4.2.2.]{Alexander2}, and from Theorem \ref{Mainteo1}.
\end{proof}

\subsection{The fibers over the nonsingular locus $\textrm{M}_{g}$}\label{fibers}

Assume now that $P(n)=(rh)n+r(1-g)$. This is the Hilbert polynomial of coherent sheaves of uniform multi-rank $r$ and degree $0$. We have constructed a projective scheme $\textrm{SPB}(\rho)^{\delta\textrm{-(s)s}}_{P,g}$ together with a map $\Theta_{sb}:\textrm{SPB}(\rho)^{\delta\textrm{-(s)s}}_{P,g}\rightarrow\overline{\textrm{M}}_{g}$ satisfying that for any stable curve $[X]\in\overline{\textrm{M}}_{g}$, 
$$\Theta_{sb}^{-1}([X])=\textrm{SPB}(\rho)^{\delta\textrm{-(s)s}}_{P,X}/\textrm{Aut}(X).$$
By \cite[Theorem 3.3.1]{Alexander3}, there exists $\delta_{\infty}\in\mathbb{Q}_{>>0}$ large enough, depending only on $a,b,P$, such that the associated swamp of any $\delta$-semistable singular principal $G$-bundle on a smooth projective curve of genus $g$, $( \mathscr{E},\tau)\in\textrm{SPB}(\rho)^{\delta\textrm{-(s)s}}_{P,X}$, is generically semistable, and by \cite[Corollary 4.1.2]{Alexander1} and \cite[Remark 2.3.4.4]{Asch}, this means that $( \mathscr{E},\tau)$ is honest and $ \mathscr{E}$ is a semistable vector bundle. Finally, form \cite[Paragraph 5.1]{Alexander2}, we deduce that $\textrm{SPB}(\rho)^{\delta\textrm{-(s)s}}_{P,X}=\textrm{M}_{X}(G)$ for any $\delta>\delta_{\infty}$ and any smooth projective curve of genus $g$. Therefore, if we assume $\delta>\delta_{\infty}$, we have
$$\Theta_{sb}^{-1}([X])=\textrm{M}_{X}(G)/\textrm{Aut}(X)$$ 
for every smooth projective curve of genus $g$, thus, $\textrm{SPB}(\rho)^{\delta\textrm{-(s)s}}_{P,g}$ is a compactification of the moduli problem defined by pairs $(X,P)$ where $X$ is a smooth projective curve of genus $g$ and $P$ is a principal $G$-bundle. 

Observe that a good compactification, in the sense of Pandharipande, requires for $\Theta_{sb}^{-1}(\textrm{M}_{g})$ to be a dense open subset of $\textrm{SPB}(\rho)^{\delta\textrm{-(s)s}}_{P,g}$. Since it is open, what remains to be solved is the part regarding the density. Note also that this is equivalent to the following statement: Let $G$ be a semisimple linear algebraic group, $\rho:G\hookrightarrow\textrm{SL}(V)$ a faithful representation of dimension $r$, $\delta\in\mathbb{Q}_{>0}$ larger than $\delta_{\infty}$ and $g\geq 2$ a natural number. Let $X$ be a stable curve of genus $g$ and $( \mathscr{F},\tau)$ a $\delta$-semistable singular principal $G$-bundle of degree $0$ and rank $r$. Then, there is a discrete valuation ring $(\Oo,\mathfrak{m},\mathbb{C})$, a flat family of stable curves of genus $g$, $X_{T}\rightarrow T:=\textrm{Spec}(\Oo)$, the generic fiber $X_{T,\mu}$ is smooth and $X_{T,(0)}\simeq X$, and a flat family of $\delta$-semistable singular principal $G$-bundles $( \mathscr{F}_{T},\tau_{T})$ such that $( \mathscr{F}_{T,(0)},\tau_{T,(0)})\simeq( \mathscr{F},\tau)$. By \cite[Lemma 9.2.3]{Pando}, the extension of the sheaf $ \mathscr{F}$ is ensured, so the problem we have to deal with is the extension of the morphism of algebras $\tau:S^{\bullet}(V\otimes \mathscr{F})^{G}\rightarrow\Oo_{X}$.

%
%

\end{document}